\documentclass[12pt,reqno]{amsart}

\setcounter{secnumdepth}{2}
\setcounter{tocdepth}{2}

\textheight=22truecm
\textwidth=15truecm
\voffset=-1cm
\hoffset=-1cm

\usepackage{color}
\usepackage{esint,amssymb}
\usepackage{graphicx}
\usepackage{tikz}

\usepackage{hyperref}

\newtheorem{theorem}{Theorem}
\newtheorem{proposition}[theorem]{Proposition}
\newtheorem{lemma}[theorem]{Lemma}
\newtheorem{corollary}[theorem]{Corollary}

\theoremstyle{definition}
\newtheorem{remark}[theorem]{Remark}

\numberwithin{equation}{section}
\numberwithin{theorem}{section}

\newtheorem{zero}{Theorem}

\newtheorem{one}{Proposition}

\newtheorem{two}{Proposition}

\newtheorem{three}{Proposition}

\newtheorem{zerobdry}{Theorem}

\newtheorem{four}{Proposition}

\newtheorem{five}{Proposition}

\newtheorem{six}{Proposition}

\newtheorem{seven}{}

\newcommand{\Z}{\mathbb{Z}}

\newcommand{\R}{\mathbb{R}}

\newcommand{\HH}{\mathcal{H}}

\newcommand{\ep}{\varepsilon}

\newcommand{\J}{\mathcal A}
\newcommand{\la}{\lambda}

\newcommand{\abs}[1]{\left| #1 \right|}

\newcommand{\average}{{\mathchoice {\kern1ex\vcenter{\hrule height.4pt
width 6pt depth0pt} \kern-9.7pt} {\kern1ex\vcenter{\hrule
height.4pt width 4.3pt depth0pt} \kern-8pt} {} {} }}
\newcommand{\ave}{\average\int}

\renewcommand{\bar}{\overline}
\renewcommand{\tilde}{\widetilde}

\begin{document}

\title[H\"older regularity of stable solutions]
{H\"older regularity of stable solutions to  elliptic equations up to $\mathbf{\mathbb{R}^9}$: \\
full quantitative proofs}

\author[X. Cabr\'e]{Xavier Cabr\'e}
\address{﻿X.C.\textsuperscript{1,2,3} ---
\textsuperscript{1}ICREA, Pg.\ Lluis Companys 23, 08010 Barcelona, Spain \& 
\textsuperscript{2}Universitat Polit\`ecnica de Catalunya, Departament de Matem\`{a}tiques and IMTech, 
Diagonal 647, 08028 Barcelona, Spain \&
\textsuperscript{3}Centre de Recerca Matem\`atica, Edifici C, Campus Bellaterra, 08193 Bellaterra, Spain
}
\email{xavier.cabre@upc.edu}

\begin{abstract}
This article concerns the results obtained in [Cabr\'e, Figalli, Ros-Oton, and Serra, Acta Math.\ 224 (2020)], which established the H\"older regularity of stable solutions to semilinear elliptic equations in the optimal range of dimensions $n\leq 9$. For expository purposes, we provide self-contained proofs of all results. 
They involve only basic Analysis tools and are intended to be accessible to a broader mathematical audience beyond PDE specialists.

Two of the results in the 2020 article relied on compactness arguments.  Here we present, instead, quantitative proofs from the more recent paper [Cabr\'e, to appear in Amer. J. Math, arXiv:2211.13033]. They allow to quantify the H\"older regularity exponent and simplify significantly the treatment of boundary regularity. 

We also comment on similar progress and open problems for related equations.
\end{abstract}

\subjclass[2010]{35J61, 35J15, 35B45, 35B65, 35-02}

\dedicatory{In memory of Haim Brezis and Ireneo Peral, in their debt}

\thanks{The author was supported by grants PID2021-123903NB-I00 and RED2022-134784-T funded by MCIN/AEI/10.13039/501100011033 and by ERDF ``A way of making Europe'', and by the Catalan grant 2021-SGR-00087, as well as by the Spanish State Research Agency through the Severo Ochoa and Mar\'{\i}a de Maeztu Program for Centers and Units of Excellence in R\&D (CEX2020-001084-M)}

\maketitle

\
\vspace{-1cm}

{\small
\tableofcontents
}

\section{Introduction \vspace{.15cm}}

The regularity of stable solutions to semilinear elliptic PDEs has been a topic of study since the 1960's, motivated by a combustion problem raised by Gel'fand~\cite{Gelf}. The topic concerns the equation $-\Delta u = f(u)$ posed in a bounded domain $\Omega$ of $\R^n$. The main question is to know in which dimensions stable energy solutions must be regular or, in the contrary, can be unbounded ---that is, singular.  It was quickly noticed that, in dimensions 10 and higher, there exist explicit stable energy solutions which are singular. 

The first regularity results appear in the seventies, in works of Joseph and Lundgren~\cite{JL} and of Crandall and Rabinowitz~\cite{CR} on an important class of stable solutions: the so called ``extremal solutions'', usually denoted by~$u^\star$.  In the mid-nineties, the theory experienced a revival after new progress was made by Brezis and collaborators~\cite{BCMR, BV}. Among many other questions on the topic, Brezis asked the following one, that we quote literally:
 
\vspace{1.5mm}

{\it \cite[Brezis; Open problem 1]{Brezis}. Is there something ``sacred'' about dimension 10? More precisely, is it possible in ``low'' dimensions to construct some $f$ (and some~$\Omega$) for which the extremal solution $u^\star$ is unbounded? Alternatively, can one prove in ``low'' dimension that $u^\star$ is
smooth for every $f$ and every $\Omega$? }
\vspace{1.5mm}

Note that the question did not exclude any of the two possible answers ---at that time, it was not clear which one would be correct. Although there had been some partial answers, the problem remained open until 2019, when Cabr\'e, Figalli, Ros-Oton, and Serra~\cite{CFRS} finally established the regularity of stable solutions up to the optimal dimension 9. 

In this article we will present self-contained proofs of the results of \cite{CFRS}. Following a more recent paper of the author, \cite{C22quant}, we will incorporate simplified and quantitative proofs of two of the results in~\cite{CFRS} which were proven, there, by contradiction-compactness arguments. These new tools have led, in works by Erneta~\cite{Ern1, Ern2,Ern3}, to analogue results for equations with variable coefficients.

We will also comment on similar progress and on open problems for related equations, including MEMS-type problems for the Laplacian, as well as the $p$-Laplacian and the fractional Laplacian cases.

For expository purposes, the presentation in the current article is self-contained, in the sense that all results are fully proven. It is intended to be accessible to a broader mathematical audience beyond PDE specialists, since we only use basic Analysis tools ---for instance, in the interior regularity proofs, we do not even use the Sobolev inequality.
The exposition is divided into two parts: we first treat interior regularity and later regularity up to a flat boundary. We treat boundary regularity only up to the flat boundary of a half-ball to make the exposition more transparent and shorter, and also since this case already contains the essential ideas in the boundary regularity results of~\cite{CFRS, Ern2, Ern3} for more general domains.

All results in this article are taken either from~\cite{CFRS} or \cite{C22quant}, with the exception of  simplified proofs of the $W^{1,2}$ estimates of Propositions~\ref{prop:2} and~\ref{prop:bdry-nabla2byL1}. These proofs appear here for the first time and are based on a new interpolation lemma, proved in Appendix~\ref{app:interp}.

Appendices \ref{app:morrey} and \ref{app:example} do not appear in~\cite{CFRS}, nor in~\cite{C22quant}. The first one is a quick proof of Morrey's embedding. Appendix~\ref{app:example}, instead, is not needed in our proofs but addresses an interesting Real Analysis question raised (already back to 2013) by the crucial lemma of~\cite{CFRS}, namely, Proposition~\ref{prop:1} below.

The book~\cite{Dup} and the short survey \cite{C17} present the general framework and previous regularity results on stable solutions, except for those from \cite{CFRS} and \cite{C22quant}. Besides regularity issues, \cite{Dup} treats many other questions on stable solutions.

\subsection{The setting and previous works}\label{subsec-plan}
\
\smallskip

Consider the semilinear elliptic equation
\begin{equation}\label{eq:PDE}
-\Delta u=f(u) \qquad \text{in }\Omega\subset \R^n,
\end{equation}
where $\Omega$ is a bounded domain, $f\in C^1(\R)$, and  $u:\overline\Omega \subset \R^n\to \R$.
Notice that \eqref{eq:PDE} is the Euler-Lagrange equation of the functional
\begin{equation}\label{energy}
\mathcal E(u):=\int_{\Omega}\Bigl(\frac{|\nabla u|^2}2-F(u)\Bigr)\,dx,
\end{equation}
where $F$ is a primitive of $f$. 
The second variation of $\mathcal E$ at $u$ is given by
$$
\frac{d^2}{d\varepsilon^2}\Big|_{\varepsilon=0}\mathcal E(u+\varepsilon\xi)
=\int_{\Omega}\Bigl(|\nabla \xi |^2-f'(u)\xi^2\Bigr)\,dx.
$$
We say that $u$ is a {\it stable} solution\footnote{At this point,  in this definition we may restrict ourselves to regular solutions, in which case all quantities in~\eqref{stabilityLip} are well defined. Later we will consider distributional solutions for nonlinearities with $f'\geq0$, in which case \eqref{stabilityLip} is also well defined.} of equation \eqref{eq:PDE} in $\Omega$ if the second variation at $u$ is nonnegative, namely, if 
\begin{equation}\label{stabilityLip} 
\int_{\Omega} f'(u) \xi^2\,dx\leq \int_{\Omega} |\nabla \xi|^2\,dx  \quad \mbox{ for all }
\xi\in C^1(\overline\Omega) \text{ with } \xi_{|_{\partial\Omega}}\equiv 0.
\end{equation}
Note that the stability of $u$ is defined within the class of functions $u+\varepsilon\xi$ agreeing with~$u$ on the boundary $\partial\Omega$. Stability is equivalent to assuming the nonnegativeness of the first
Dirichlet eigenvalue in~$\Omega$ for the linearized operator of \eqref{eq:PDE} at~$u$, namely, $-\Delta-f'(u)$.

If a {\it local minimizer} of $\mathcal E$ exists (that is, a minimizer for small perturbations having same boundary values), then it will be a stable solution as well.

This paper concerns the regularity of stable solutions to \eqref{eq:PDE}, a question that has been investigated since the seventies. It was motivated by the Gel'fand-type problems \eqref{eq:lambda} described below, for which $f(u)=e^u$ is a model case. Notice that for this nonlinearity, and more generally when $f$ satisfies \eqref{nonl-G} below, the energy functional \eqref{energy}, among functions with same boundary values  on $\partial\Omega$ as any given function $u$, is unbounded below.\footnote{\label{Foot1}To see this,  consider $\mathcal E(u+tv)$ for any $v \in C^1_c(\Omega)$ with $v\geq0$ and $v\not\equiv 0$, and let $t\to +\infty$.}  Thus, the functional admits no absolute minimizer. However, we will see that, in some relevant cases, it admits stable solutions. This class of solutions is the subject of this article.

Two other remarks may be helpful to settle the general framework and goals. First, note that once a solution is proven to be bounded, further regularity follows from classical linear elliptic theory, since the right-hand side $f(u)$ becomes bounded.\footnote{Hence, $f(u)\in L^p$ for all $p<\infty$ and one can apply first Calder\'on-Zygmund theory (to gain two derivatives in $L^p$ and get $u\in W^{2,p}\subset C^{1,\varepsilon}$ when $p>n$), and later Schauder theory (to gain two derivatives in $C^\varepsilon$ and get $u\in C^{3,\varepsilon}$ when $f\in C^{1,\varepsilon}$ ---and even further H\"older regularity if $f$ allows. Note that when $f\in C^1$ only, then $f(u)\in C^\alpha$ for all $\alpha\in (0,1)$, and thus $u\in C^{2,\alpha}$ for all $\alpha\in (0,1)$ is the best regularity we can expect). The statements of the Schauder and Calder\'on-Zygmund theories are well summarized in the beginning of \cite[Appendix B]{Struwe}; full proofs can be found in Chapters 4 and 9, respectively, of \cite{GT}.} Second, recall that within the class of all solutions (not being necessarily stable), $L^\infty$~bounds are available, in general domains, only for nonlinearities $f$ growing at most as the critical Sobolev power, i.e., $|f(u)|\leq C(1+|u|)^p$ with $p\leq (n+2)/(n-2)$ for $n\geq 3$. For a proof of this fact, see the application after Lemma B.3 in \cite{Struwe}. In the current paper the goal is to reach a larger class of nonlinearities when we restrict ourselves to stable solutions.

Our problem is a PDE analogue of another fundamental question: the regularity of stable minimal hypersurfaces in $\R^n$. These surfaces may not be absolute minimizers of area. It is known that they may be singular for $n\geq 8$ ---the Simons cone being an explicit example. In contrast, their regularity was established for surfaces in $\R^3$ in the late seventies (independently by Fischer-Colbrie and Schoen~\cite{Fischer-Schoen} and by Do Carmo and Peng~\cite{doCarmo-Peng}) and very recently (in the 2020's) in $\R^4$ by Chodosh and Li~\cite{CL}, in $\R^5$ by Chodosh, Li, Minter, and Stryker~\cite{CLMS}, and in $\R^6$ by Mazet~\cite{M}. The question remains open for $n= 7$. 

In our setting, as in minimal surfaces, there is an explicit singular stable solution (with finite energy) in large dimensions. Indeed, for $n\geq 3$,
\begin{equation}\label{logsoln}
u=\log\frac{1}{|x|^2} \quad \text{solves \eqref{eq:PDE} weakly, with } f(u)=2(n-2)e^u \text{ and } \Omega=B_1.
\end{equation}
Using Hardy's inequality,\footnote{See Proposition 1.20 in \cite{CP}, for instance, for its very simple proof.} we see that the linearized operator $-\Delta-2(n-2)e^u=-\Delta-2(n-2)|x|^{-2}$ is nonnegative when $2(n-2)\leq (n-2)^2/4$, i.e., when $n\ge 10$. Thus, there exist singular $W^{1,2}(B_1)$ stable solutions to equations of the form \eqref{eq:PDE} whenever $n\geq 10$. From our results we will see that this does not occur for $n\leq 9$.

Stable solutions exist and play an important role in the so called Gel'fand-type problems. The case $f(u)=e^u$ arose in~\cite{Gelf}, in the sixties, from an ignition problem in combustion theory. It motivated the subsequent literature on stable solutions. See Section~3 of \cite{C22} for the physical meaning, in combustion, of the elliptic problem~\eqref{eq:lambda} below and its relation with the corresponding nonlinear heat equation.
 
Assume that $f:[0,+\infty)\to\R$ is $C^1$ and that
\begin{equation}\label{nonl-G}
f\text{ is nondecreasing, convex, and satisfies } f(0)>0 \text{ and } 
\lim_{t\to+\infty}\frac{f(t)}{t}=+\infty. 
\end{equation}
Given a constant $\lambda > 0$ consider the problem\footnote{Problem \eqref{eq:lambda} can be written equivalently in the form \eqref{BVP} used later. Indeed, assuming \eqref{nonl-G}, extend $f$ to $(-\infty,0)$, no matter how but to have $f\in C^1(\R)$ and $f\geq 0$ in $\R$. Then, by the maximum principle, solutions of the problem $-\Delta u=\lambda f(u)$ in $\Omega$, $u=0$ on $\partial\Omega$, coincide (for $\lambda >0$) with those of \eqref{eq:lambda}.}
\begin{equation}
\label{eq:lambda}
\left\{
\begin{array}{cl}
-\Delta u=\lambda f(u) & \text{in }\Omega\\
u>0 & \text{in }\Omega\\
u=0 & \text{on }\partial\Omega.\end{array}
\right.
\end{equation}
Note first that $-\Delta u=\lambda f(u)$ admits no trivial solution ($u\equiv 0$ is not a solution since $\lambda f(0)>0$). At the same time, the functional $\mathcal E$ is unbounded from below (see Footnote~\ref{Foot1}) and thus admits no absolute minimizer. However, \eqref{eq:lambda} admits a unique {\it stable} classical solution $u_\lambda$ for every $\lambda \in (0,\lambda^\star)$, where $\lambda^\star\in (0,+\infty)$ is a certain parameter; see e.g.\ the book \cite{Dup} by Dupaigne.\footnote{See Section 2 of \cite{C22} for a related situation concerning minimal surfaces. It concerns catenoids (soap films formed between two coaxial parallel circular rings) that are stable but not absolute minimizers and still can be observed experimentally.} On the other hand, no weak solution exists for $\lambda>\lambda^\star$ (under various definitions of weak solution). The functions $u_\lambda$, which are bounded in $\overline\Omega$ for $\lambda<\lambda^\star$ and hence regular, form an increasing sequence in $\lambda$. They converge, as $\lambda\uparrow\lambda^\star$, towards an $L^1(\Omega)$ distributional {\it stable} solution $u^\star$ of \eqref{eq:lambda} for $\lambda=\lambda^\star$. It is called {\it the extremal solution} of the problem and, depending on the nonlinearity~$f$ and the domain~$\Omega$, it may be either regular or singular. 

\begin{remark}\label{monsters}
Let us explain, at this point, why in previous paragraphs we have made reference to stable {\it ``energy''} solutions, and also to $W^{1,2}$ solutions, as well as its relation to extremal solutions $u^\star$.

By energy solution we mean a weak (or, equivalently, distributional) solution $u\in W^{1,2}(\Omega)$ which has finite energy $\mathcal E(u)$, where $\mathcal{E}$ is defined in \eqref{energy}. A larger class considered in \cite{CFRS}, which is the most appropriate one when $f\geq 0$, is that of stable weak solutions belonging to $W^{1,2}(\Omega)$. By Proposition~4.2 and Corollary~4.3 in \cite{CFRS}, when $f$ is nonnegative, nondecreasing, and convex (as in the Gel'fand problems), every stable $W^{1,2}(\Omega)$ weak solution can be approximated by regular stable solutions of semilinear equations with nonlinearities in the same class.\footnote{This extends the approximation result \cite[Section 3.2.2]{Dup}) for $W^{1,2}_0(\Omega)$ solutions vanishing on the boundary. The results use a key technique introduced in Brezis, Cazenave, Martel, and Ramiandrisoa~\cite{BCMR}.} Therefore, once a priori estimates are proved for regular stable solutions (with such nonlinearities), they pass to the limit and establish the same estimates now for every  stable $W^{1,2}(\Omega)$ weak solution. 

The same fact also holds for every extremal solution $u^\star$, since it can be approximated (trivially, by definition) by the stable regular solutions $u_\lambda$ for $\lambda < \lambda^\star$ ---even if one does not know a priori if $u^\star \in W^{1,2}(\Omega)$. This fact (i.e., $u^\star \in W^{1,2}(\Omega)$ in every dimension and domain) was another open question posed by Brezis and V\' azquez~\cite[Open problem 1]{BV} in the nineties and is one of the main results of~\cite{CFRS}; see Subsection~\ref{subsec-bdry-results} here below.

Instead, there are {\it singular} distributional solutions $u\in L^1(\Omega)$ (having, thus, $\Delta u\in L^1(\Omega))$, but with $u\notin W^{1,2}(\Omega)$, which are stable in as low dimensions as any $n\geq 3$. These solutions are ``strange'' objects, in the sense that cannot be approximated by regular stable solutions ---otherwise they would be locally bounded for $n\leq 9$, as a consequence of Theorem~\ref{thm:0} below. In particular, they cannot be extremal solutions. A simple example of such singular distributional solution is $U(x):= |x|^{2-n+\varepsilon}-1$ for $n\geq 3$ and $\varepsilon \in (0,1)$ small enough. Indeed, $U$~solves the semilinear equation with $f(u)= \varepsilon (n-2-\varepsilon) (1+u)^{(n-\varepsilon)/(n-2-\varepsilon)}$, a nonnegative, nondecreasing, and convex nonlinearity. Since $f'(U)= \varepsilon (n-\varepsilon)/|x|^2$, a comparison with the Hardy constant ---as done right after \eqref{logsoln}--- shows that $U$ is a stable distributional solution if $\varepsilon>0$ is small enough.
\end{remark}

Back to the Gel'fand problem and its extremal solution, in the seventies the seminal paper of Crandall and Rabinowitz~\cite{CR} established the boundedness of $u^\star$ when $n \leq 9$ and $f$ is either the exponential $f(u)=e^u$ or a power $f(u)=(1+u)^p$ with $p>1$.\footnote{Previously, these same nonlinearities had been studied in all detail in the radial case by Joseph and Lundgren~\cite{JL}.} In the mid-nineties Brezis asked for a similar result in the larger class of nonlinearities \eqref{nonl-G} (see, e.g., \cite{Brezis}), a question that gave rise to the following works, among others, cited in chronological order.

Nedev~\cite{Ned00} proved the regularity of $u^\star$ for $n\leq 3$. In the radial case,
Cabr\'e and Capella~\cite{CC} established the boundedness
of every stable radial $W^{1,2}(B_1)$  solution to \eqref{eq:PDE} in the unit ball
whenever $n\leq 9$, for every nonlinearity $f$.
Back to the general nonradial case, an interior $L^\infty$ a priori bound for stable regular solutions holding  for all nonlinearities was established for $n\leq 4$ by Cabr\'e~\cite{C10} in 2010. The author gave a different proof of this result more recently, in~\cite{C19}. This led to the regularity of $u^\star$ up to the boundary of convex domains when $n\leq 4$. The convexity assumption of the domain was nicely removed by Villegas in~\cite{Vil13}.

The optimal dimension $n\leq 9$ was finally reached in 2019, by Cabr\'e, Figalli, Ros-Oton, and Serra~\cite{CFRS}, assuming $f\ge 0$ for interior regularity, and $f\ge 0$, $f'\ge 0$, and $f''\ge 0$ for boundary regularity. More recently, \cite{C22quant} has provided quantitative proofs of two results of~\cite{CFRS} proved there by contradiction-compactness arguments.

The book~\cite{Dup} contains detailed presentations of these developments, except for those of \cite{CFRS} and \cite{C22quant}, which are more recent. The survey \cite{C17}, which is a shorter presentation of the topic, contains also most of the proofs of the above mentioned results ---the ones prior to \cite{CFRS}.

\subsection{The interior result}\label{subsec-interior-results}
\
\smallskip

The following is the interior regularity result of \cite{CFRS}. It provides an interior H\"older a priori bound for stable solutions  when $n\leq 9$. It also establishes, in every dimension, an interior $W^{1,2}$ estimate (indeed a better $W^{1,2+\gamma}$ bound). All quantities are controlled in terms of only the $L^1$ norm of the solution. Regarding the nonlinearity, all what we need is $f\geq 0$. 

Notice that the following estimates look like the typical ones for linear equations. This is a remarkable consequence of the stability of the solution, since our equations are nonlinear.

Here, and throughout the paper, by ``dimensional constant'' we mean a constant that depends only on $n$. 

\begin{zero}
\label{thm:0}\hspace{-1mm}(\cite[Theorem 1.2]{CFRS}) {\it
Let $u\in C^\infty(\overline B_1)$ be a stable solution of $-\Delta u=f(u)$ in $B_1\subset \R^n$, for some nonnegative function $f\in C^{1}(\R)$.

Then,
\begin{equation}
\label{eq:W12 L1 int}
\|\nabla u\|_{L^{2+\gamma}({B}_{1/2})} \le C\|u\|_{L^1(B_1)}
\end{equation}
for some dimensional constants $\gamma>0$ and $C$.
In addition,
\begin{equation}
\label{eq:Ca L1 int}
 \|u\| _{C^\alpha(\overline{B}_{1/2})}\leq C\|u\|_{L^1(B_1)} \quad \text{ if } n \leq 9,
 \end{equation}
where $\alpha>0$ and $C$ are dimensional constants.
}\end{zero}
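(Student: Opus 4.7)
The plan is to follow the strategy of \cite{CFRS} but to replace their compactness/contradiction step by the quantitative $L^1$-versus-radial-derivative inequalities announced in the abstract, so that the Hölder exponent becomes explicit. The proof proceeds in three stages.

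\textbf{Stage 1 (geometric stability inequality).} Differentiating the equation gives $-\Delta(\partial_k u) = f'(u)\partial_k u$ for each coordinate $k$. Multiplying by $\partial_k u \eta^2$, summing in $k$ and using the Bochner identity, combined with the stability inequality \eqref{stabilityLip} applied to $\xi = |\nabla u|\eta$ (after smooth truncation near $\{|\nabla u|=0\}$), produces the Sternberg--Zumbrun inequality
$$\int_{B_1}\bigl(|A|^2|\nabla u|^2 + |\nabla_T |\nabla u||^2\bigr)\eta^2\,dx \le \int_{B_1} |\nabla u|^2 |\nabla \eta|^2\,dx,$$
where $|A|^2$ is the squared second fundamental form of the level sets of $u$ and $\nabla_T$ denotes the tangential gradient along them.

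\textbf{Stage 2 ($L^2$-gradient bound via the radial $L^1$ inequality).} The aim is to produce $\|\nabla u\|_{L^2(B_{3/4})} \le C \|u\|_{L^1(B_1)}$. Apply the stability inequality with test functions $\xi = c(|x|)$ chosen so that $\int f'(u)c^2$ can be computed via the PDE in terms of boundary and radial terms, using the assumption $f\ge 0$. To close the loop one needs to control $\|u\|_{L^2}$ or $\|\nabla u\|_{L^2}$ by $\|u\|_{L^1}$, which is exactly what the new quantitative Poincaré-type estimate of the form
$$\int_{A_{r,R}} |u|\,dx \le C(R-r)\int_{A_{r,R}} |\partial_r u|\,dx + \text{(boundary term)}$$
(obtained by integrating along radii and using Fubini) makes possible. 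Combined with Stage~1, a dyadic iteration yields the $W^{1,2+\gamma}$ estimate \eqref{eq:W12 L1 int} in every dimension $n$, via a self-improving (Gehring-type) argument.

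\textbf{Stage 3 (Hölder decay for $n\le 9$).} The $C^\alpha$ bound \eqref{eq:Ca L1 int} amounts to a Morrey-type decay
$$\int_{B_r(x_0)} |\nabla u|^2\,dx \le C r^{n-2+2\alpha} \|u\|_{L^1(B_1)}^2 \qquad \text{for all } x_0 \in \overline{B}_{1/2},\ r\in (0,1/4].$$
This is obtained by iterating Stages~1--2 on dyadic balls and tracking constants through the $L^1$-radial-derivative estimate. The dimensional constraint $n\le 9$ enters via the sharp Hardy inequality $\int |\xi|^2/|x|^2\,dx \le \bigl(2/(n-2)\bigr)^2 \int |\nabla \xi|^2\,dx$: the improvement factor in the iteration is strictly less than the naive scaling factor exactly when the Hardy constant beats the coefficient $2(n-2)$ appearing in the borderline singular example $u = -2\log|x|$, i.e.\ precisely when $n\le 9$. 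Morrey--Campanato embedding then converts the decay into the claimed Hölder estimate.

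\textbf{Main obstacle.} The delicate point is quantifying $\alpha$. In \cite{CFRS} this step used a blow-up argument combined with a Liouville-type classification of homogeneous stable solutions, producing $\alpha$ only implicitly. The explicit exponent emerges from replacing the blow-up by the new single-scale $L^1$-radial-derivative inequality, which provides a quantitative improvement factor at each dyadic scale; tracking the positive gap $(n-2)^2/4 - 2(n-2) > 0$ (valid for $n\le 9$) through the iteration yields a dimensional $\alpha > 0$. Constructing test functions that simultaneously interact well with the Sternberg--Zumbrun inequality, the Hardy inequality, and the radial Poincaré bound is the core technical challenge.
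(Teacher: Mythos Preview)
Your outline captures the high-level architecture (Sternberg--Zumbrun, a $W^{1,2}$ bound, an iteration to H\"older), but two of the three load-bearing ingredients are misidentified.

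\textbf{Missing the Pohozaev-type test function.} The dimension restriction $n\le 9$ does not come from the Hardy inequality applied to the Sternberg--Zumbrun estimate. It comes from a \emph{second}, independent use of stability with the test function $\xi=(x\cdot\nabla u)\,r^{(2-n)/2}\zeta$, i.e.\ $\mathbf c=x\cdot\nabla u$. Plugging this into the linearized equation $\Delta(x\cdot\nabla u)+f'(u)(x\cdot\nabla u)=2\Delta u$ and integrating by parts produces the weighted radial estimate
\[
\int_{B_\rho} r^{2-n}u_r^2\,dx \le C\rho^{2-n}\int_{B_{3\rho/2}\setminus B_\rho}|\nabla u|^2\,dx,
\]
with the constant $\frac{(n-2)(10-n)}{4}$ appearing on the left. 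This is the quantity that is iterated (hole-filling), not $\int_{B_r}|\nabla u|^2$. Your Stage~1 only produces the Sternberg--Zumbrun inequality, which by itself gives nothing dimension-dependent; your Stage~3 then invokes Hardy without a mechanism connecting it to any inequality you have derived.

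\textbf{The radial $L^1$ inequality is not a Fubini/Poincar\'e identity.} The estimate you write, $\int_{A_{r,R}}|u|\le C(R-r)\int_{A_{r,R}}|\partial_r u|+\text{(boundary term)}$, is just the fundamental theorem of calculus along rays and is useless for closing the iteration: the boundary term is exactly what one cannot control. The actual new estimate is
\[
\|u-t\|_{L^1(B_1\setminus B_{1/2})}\le C\|u_r\|_{L^1(B_1\setminus B_{1/2})}
\]
for some constant $t$ (the median of $u$ on a sphere), with \emph{no} boundary term, and it requires $u$ to be superharmonic. Its proof uses a harmonic replacement and the maximum principle to reduce to a Neumann problem for Laplace's equation on a sphere; the Fubini argument you sketch cannot produce this. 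This estimate is what replaces the compactness step: combined with the $W^{1,2}$-by-$L^1$ bound (applied to $u-t$ on small balls covering the annulus), it converts the $|\nabla u|^2$ on the right of the weighted radial estimate back into $r^{2-n}u_r^2$ on a larger annulus, yielding $\int_{B_{1/2}}r^{2-n}u_r^2\le\theta\int_{B_1}r^{2-n}u_r^2$ with $\theta<1$.

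Finally, your Stage~2 mechanism for $\|\nabla u\|_{L^2}\le C\|u\|_{L^1}$ is also off: it is obtained from the weighted Hessian bound $\||\nabla u|\,D^2u\|_{L^1}\le C\|\nabla u\|_{L^2}^2$ (a consequence of Sternberg--Zumbrun plus $f\ge0$) together with an elementary interpolation inequality in cubes and the Simon absorption lemma, not from a radial test function $c(|x|)$.
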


The theorem is stated as an a priori bound for smooth stable solutions. From it one may deduce, by the approximation method mentioned in Remark~\ref{monsters},\footnote{Here one must approximate $f$, in addition,  by smooth nonlinearities, since we assume $u\in C^\infty$ in Theorem~\ref{thm:0} to simplify the exposition. This was not necessary in \cite{CFRS}, which only assumed $u\in C^2$.} interior regularity for $W^{1,2}$ stable weak solutions to \eqref{eq:PDE} whenever $f$ is nonnegative, nondecreasing, and convex. The same fact holds for the extremal solution~$u^\star$ of~\eqref{eq:lambda} when $f$ satisfies~\eqref{nonl-G}.\footnote{This is the case since, as mentioned before, $u^\star$  can be approximated (trivially, by definition) by the stable regular solutions $u_\lambda$ for $\lambda < \lambda^\star$ ---even if one does not know a priori if $u^\star \in W^{1,2}(\Omega)$.}

For $n\geq 10$ (and still $f\geq 0$), \cite{CFRS} proved that stable solutions belong, in the interior of the domain, to certain Morrey spaces. This was an almost optimal result that has been improved by Peng, Zhang, and Zhou \cite{PZZ1}, reaching optimality. For instance, for $n=10$, \cite{PZZ1} shows that stable solutions are BMO in the interior, a result also found, independently, by Figalli and Mayboroda (personal communication). See also the forthcoming work of Figalli and Franceschini cited in 1.3.4 of Subsection~\ref{subsec-open-related}.

Also after \cite{CFRS}, another interesting result by Peng, Zhang, and Zhou~\cite{PZZ2} proves that, for $n=5$, the hypothesis $f\geq 0$ is not needed to conclude interior H\"older regularity. The H\"older control, however, is given by the $W^{1,2}$ norm of the solution instead of the $L^1$ norm. This establishes in dimension~5 the previous analogue result of Cabr\'e~\cite{C10,C19} for $n\leq 4$. More details on this are given below in Subsection~\ref{subsec-open-related}, Open problem 1.3.1.  

 \begin{remark}\label{fbddbelow} 
The quantitative proof of Theorem~\ref{thm:0} allows for an easy extension of the result to the case $f\geq -K$ for some nonnegative constant $K$. The resulting estimates in this case are those of Theorem~\ref{thm:0} with the right-hand sides replaced by $C(\|u\|_{L^1(B_1)}+K)$, where $C$ is still a dimensional constant. The necessary changes in the proofs will be explained in Section~\ref{sect:fbddbelow}. 
 
Here, we will follow the quantitative proofs of \cite{C22quant}, since it is not obvious how to obtain the result for $f\geq -K$ by extending the proof in \cite{CFRS} ---due to the compactness argument used in that paper. However, this has been accomplished recently by Fa Peng \cite{P}, who has weakened the hypothesis  $f\geq 0$ in \cite{CFRS} to allow $f\geq A\min(0,t)-K$ for positive constants $A$ and~$K$. This improves our requirement $f\geq -K$. He needs, however, to replace the~$L^1$ norm of $u$ on the right-hand side of~\eqref{eq:Ca L1 int} by the $W^{1,2}$ norm of $u$.
\end{remark}
  
\subsection{The boundary result}\label{subsec-bdry-results}
\
\smallskip

Let us now turn into boundary regularity.
It is well known that the interior bound of Theorem~\ref{thm:0} and the moving planes method yield an $L^\infty$ bound up to the boundary when the domain~$\Omega$ is convex and we deal with vanishing Dirichlet boundary conditions; see \cite{C10} and Corollary~1.4 of \cite{CFRS}. Since this procedure does not work for nonconvex domains, \cite{CFRS} undertook the study of boundary regularity in more general domains and proved the following result.

\begin{seven}
\label{thm:7}{\it
Let  $\Omega\subset \R^n$ be a bounded domain of class $C^3$ and let $u\in C^0(\overline\Omega) \cap C^2(\Omega)$ be a stable solution of 
\begin{equation}\label{BVP}
\left\{
\begin{array}{cl}
-\Delta u=f(u) & \text{in }\Omega\\
u=0 & \text{on }\partial\Omega.
\end{array}
\right.
\end{equation}
Assume that $f: \R \to \R$ is nonnegative, nondecreasing, and convex.

Then,
\begin{equation}\label{eq:w12}
\|\nabla u\|_{L^{2+\gamma}(\Omega)}\le C_\Omega \,\|u\|_{L^1(\Omega)} \quad \text{ for every } n\geq 1,
\end{equation}
and
\begin{equation*}
 \|u\| _{C^\alpha(\overline\Omega)} \leq C_\Omega\, \|u\|_{L^1(\Omega)} \quad \text{ for } n\leq 9,
\end{equation*}
where $\gamma>0$ and $\alpha>0$ are dimensional constants, while $C_\Omega$ depends only on $\Omega$.}
\end{seven}

Erneta~\cite{Ern1, Ern2, Ern3} has improved the previous result to allow for~$C^{1,1}$ domains.
 
From the a priori estimates of the theorem, one deduces the corresponding regularity results for the extremal solution $u^\star$ to \eqref{eq:lambda}. This is accomplished by using the above estimates for the family of regular stable solutions $u_\lambda$, where $\lambda<\lambda^\star$. 

The $W^{1,2}$ regularity result \eqref{eq:w12} solved~\cite[Open problem 1]{BV}, a question posed by Brezis and V\'azquez. Previously, it had been proven, under the stronger hypotheses \eqref{nonl-G} on the nonlinearity, for $n\leq 5$ by Nedev~\cite{Ned00} and for $n=6$ by Villegas~\cite{Vil13}. 

To prove the estimates of the theorem, one first covers $\partial\Omega$ by small enough balls so that $\partial\Omega$ is  almost flat (in the sense of \cite{CFRS}) inside each of the balls. Thus, it is sufficient to prove a local-boundary result\footnote{See the statement of Theorem~\ref{thm:0bdry} to understand the expression ``local-boundary result''. This terminology reflects that, in the case of half-balls, one proves a bound in a concentric half-ball smaller than the starting half-ball. They share part of their flat boundaries, which is where the vanishing boundary datum is assumed.\label{foot5}} for almost flat boundaries. 
To simplify the exposition, in the current paper we place ourselves in the simpler setting of a flat boundary ---which, anyway, contains the essential ideas in the proofs of~\cite{CFRS, Ern2, Ern3} for non-flat boundaries. Notice here that Erneta~\cite{Ern2, Ern3} has extended the quantitative proof from~\cite{C22quant} to almost flat boundaries. This allows him to avoid the proof of Theorem~6.1 in \cite{CFRS}, which is a delicate blow-up and compactness argument used together with a Liouville theorem in the half-space. It was needed there since the contradiction-compactness argument (Step~2 of the proof of Proposition~6.3 in \cite{CFRS}) required the boundary to be flat. The quantitative proof also allows Erneta to avoid a delicate (and strong) compactness result (Theorem~4.1 in \cite{CFRS}) needed to treat boundary regularity with the approach of~\cite{CFRS}.

Having two different proofs of the regularity result has been already of interest in the context of other equations. Indeed, the compactness proof of \cite{CFRS} has been extended in~\cite{CMS}, for interior regularity,  to semilinear equations involving the $p$-Laplacian. Instead, the quantitative proofs of~\cite{C22quant} have been extended to stable solutions for operators with variable coefficients by Erneta~\cite{Ern1, Ern2, Ern3}.  As mentioned above, this has significantly simplified the boundary regularity proof of~ \cite{CFRS} even for the Laplacian.

We introduce the notation
$$
\R^n_+=\{x\in\R^n\, :\, x_n>0\}, \quad B_\rho^+=\R^n_+\cap B_\rho, \quad\text{and}\quad  \partial^0 B^+_\rho = \{x_n=0\}\cap \partial B^+_\rho,
$$
and state the result of \cite{CFRS} in the case of flat boundaries, which is the one we will fully prove in this expository article.

\begin{zerobdry}
\label{thm:0bdry} \hspace{-1mm}(\cite[Propositions 5.2 and 5.5, and Theorem 6.1]{CFRS}) {\it 
Let $u\in C^\infty(\overline{B^+_1})$ be a nonnegative stable solution of 
$-\Delta u=f(u)$ in $B^+_1\subset\R^n$, with $u=0$ on $\partial^0 B^+_1$.
Assume that $f\in C^1(\R)$ is nonnegative, nondecreasing, and convex.
 
Then,
\begin{equation} 
\label{higher-bdry}
 \|\nabla u\|_{L^{2+\gamma}(B^+_{1/2})}  \le C \|u\|_{L^{1}(B^+_{1})}
\end{equation}
for some dimensional constants $\gamma>0$ and $C$.
In addition,
\begin{equation}
\label{holder-bdry}
\|u\| _{C^\alpha (\overline{B^+_{1/2}})}\leq C\|u\|_{L^1(B^+_1)}  \quad \text{ if } n \leq 9,
\end{equation}
where $\alpha>0$ and $C$ are dimensional constants.
}\end{zerobdry}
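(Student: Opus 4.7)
The plan is to mirror the three-step strategy that, in the interior case, reduced Theorem~I.0 to Theorem~I.1, but now adapted to the flat boundary. The overall target will be a dyadic decay of $\int_{B^+_r(y)}|\nabla u|^2\,dx$ centered at every point $y\in\overline{B^+_{1/2}}$, including those on the flat face, from which the $C^\alpha$ bound \eqref{holder-bdry} follows by the Morrey/Campanato characterization of Hölder spaces. The higher-integrability bound \eqref{higher-bdry} will come out along the way, independently of the dimensional restriction. The key new ingredient will be a boundary analogue of Theorem~I.1, controlling $\|u\|_{L^1}$ by $\|u_r\|_{L^1}$ in boundary half-annuli centered at points of $\partial^0 B^+_1$. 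Notice that, since $u=0$ on $\partial^0 B^+_1$, the constant $t$ subtracted in Theorem~I.1 can be taken to be $0$ in the boundary version.

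First I would derive a stability-based inequality near the flat face. Testing \eqref{stabilityLip} with $\xi=\eta\,c$, where $\eta$ is a suitable cutoff and $c$ is a combination of first derivatives of $u$, and then using $-\Delta u=f(u)$ together with the Cauchy--Schwarz inequality, one obtains a bound of the form
$$
\int_{B^+_{1/2}}|\nabla u|^2\eta^2\,dx \;\le\; C\int_{B^+_{1}}u_r^2\eta^2\,dx \;+\; \text{errors controlled by }\|u\|_{L^1},
$$
with the radial derivative $u_r$ measured from a point $y\in\partial^0 B^+_1$. The full hypotheses $f\ge 0$, $f'\ge 0$, $f''\ge 0$ enter at this stage precisely to give the correct sign to the boundary terms arising from integration by parts on $\partial^0 B^+_1$ (where $u=0$ and $\nabla u=u_{x_n}e_n$), which is the only genuine difference with the interior setup. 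Combining this with a Caccioppoli-type inequality and a Gehring-type self-improvement yields \eqref{higher-bdry} in every dimension.

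Second, I would establish the boundary $L^1$--$L^1$ estimate: an inequality of the form $\|u\|_{L^1(B^+_{1}\setminus B^+_{1/2})}\le C\|u_r\|_{L^1(B^+_{1}\setminus B^+_{1/2})}$ (radial derivative centered at a boundary point). This is the main obstacle and, as the introduction announces, it is \emph{much} more delicate than its interior analogue: unlike Theorem~I.1, harmonic replacement does not suffice, since the mixed data (Dirichlet on the flat part, unspecified on the spherical part) destroy the simple Neumann structure that made the interior proof essentially immediate. In fact such an $L^1$--$L^1$ estimate cannot hold for arbitrary superharmonic functions vanishing on $\partial^0 B^+_1$, so the proof must genuinely use that $u$ is a \emph{stable} solution of the semilinear equation for the class $f\ge 0$, $f'\ge 0$, $f''\ge 0$. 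A plausible approach is to test stability against carefully chosen fields that are compatible with $u=0$ on the flat face and whose radial structure allows one to absorb the resulting boundary contributions using the monotonicity and convexity of $f$, after which an $L^2$--$L^2$ bound is upgraded to $L^1$--$L^1$ through a covering/iteration argument analogous to the one used in the interior case.

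Finally, iterating the two ingredients above at every scale and at every center $y\in\overline{B^+_{1/2}}$ yields a dyadic decay
$$
\int_{B^+_{2^{-k}}(y)}|\nabla u|^2\,dx \;\le\; \theta^{\,k}\int_{B^+_1}|\nabla u|^2\,dx,
$$
with $\theta<1$ depending only on $n$, provided $n\le 9$. The dimensional restriction enters here exactly as in the interior case, through the sharp Hardy-type balance appearing when the stability test is sharpened. The Morrey/Campanato embedding then produces \eqref{holder-bdry}. Because every step is now purely quantitative, with all constants explicit and dimensional, the resulting exponent $\alpha$ is also explicit, improving upon the compactness-based exponent of \cite{CFRS}.
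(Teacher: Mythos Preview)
Your overall architecture---three ingredients followed by a hole-filling iteration---matches the paper's, but the content of the steps is off in ways that matter.

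First, the direction of your opening estimate is inverted. The paper's stability test with $\mathbf{c}=x\cdot\nabla u$ (Theorem~\ref{thm:1bdry}) gives, for $3\le n\le 9$,
\[
\int_{B^+_\rho} r^{2-n}u_r^2\,dx \;\le\; C\rho^{2-n}\int_{A^+_{\rho,\lambda\rho}}|\nabla u|^2\,dx,
\]
i.e.\ the \emph{weighted radial derivative in a ball} is controlled by the \emph{full gradient in an annulus}, not the other way around. The iteration then closes on the scale-invariant quantity $\int_{B^+_\rho}r^{2-n}u_r^2\,dx$, not on $\int_{B^+_\rho}|\nabla u|^2\,dx$ as you wrote. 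The chain is: radial-in-ball $\le$ gradient-in-annulus $\le$ $\|u\|_{L^1}^2$-in-annulus $\le$ $\|u_r\|_{L^1}^2$-in-annulus $\le$ weighted-radial-in-annulus, then hole-fill.

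Second---and this is the real gap---your sketch of the boundary $L^1$--$L^1$ estimate (Theorem~\ref{thm:3bdry}) misses the mechanism. The paper does not simply ``test stability against carefully chosen fields'' and absorb boundary terms. It starts from the identity $-2\Delta u+\Delta(x\cdot\nabla u)=-f'(u)\,x\cdot\nabla u$, rewrites it via the dilations $u_\lambda(x)=u(\lambda x)$ as an exact $\lambda$-derivative, and \emph{averages in $\lambda\in[1,1.1]$}. This turns the dangerous right-hand side into $\int\bigl(f(u)-f(u_{1.1})\bigr)\xi\,dx$; convexity gives $f(u)-f(u_{1.1})\le f'(u)(u-u_{1.1})$, and then stability is applied \emph{twice} via Cauchy--Schwarz on $\int f'(u)\xi^2$ and $\int f'(u)(u-u_{1.1})^2\phi^2$ to arrive at $\|\nabla(u-u_{1.1})\|_{L^2}$. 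The whole point of the $\lambda$-averaging is to trade $x\cdot\nabla u$ for $u-u_{1.1}$, thereby avoiding the $L^2$ norm of $\nabla(x\cdot\nabla u)$, which would require an $L^2$ Hessian bound that is unavailable. To close, one interpolates $\|\nabla(u-u_{1.1})\|_{L^2}$ between $L^{2+\gamma}$ and $L^1$; here the exponent $\gamma>0$ from Theorem~\ref{thm:2bdry} is \emph{essential}, not incidental. Your proposal mentions none of the $\lambda$-averaging, the double use of stability, or the indispensable role of the $W^{1,2+\gamma}$ bound in this step---without these there is no proof.

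Two smaller corrections: the paper's higher integrability \eqref{higher-bdry} comes from level-set estimates and the coarea formula (Lemma~\ref{lem:bdry-gamma-2}), not a Gehring lemma; and the convexity $f''\ge 0$ is used solely in the inequality $f(u)-f(u_{1.1})\le f'(u)(u-u_{1.1})$ inside the proof of Theorem~\ref{thm:3bdry}, not to sign boundary terms in an integration by parts.
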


It is important here to assume the solution $u$ to be nonnegative ---an assumption that will hold when applying the theorem to the global problem \eqref{BVP}, since $f\geq0$. Note also that the result requires more assumptions on the nonlinearity than the interior theorem.

\subsection{The three ingredients for the interior estimate}\label{subsec-interior}
\
\smallskip

We first note that, by approximation, the stability inequality \eqref{stabilityLip},
$$
\int_{\Omega} f'(u) \xi^2\,dx\leq \int_{\Omega} |\nabla \xi|^2\,dx,
$$ 
holds not only for $C^1(\overline\Omega)$ functions $\xi$ vanishing on $\partial\Omega$, but also for all Lipschitz functions $\xi$ in $\overline\Omega$ which vanish on $\partial\Omega$. Hence, we can consider a test function of the form $\xi=\mathbf{c}\eta$,
where $\mathbf{c}\in W^{2,\infty}(\Omega)$, $\eta$ is a Lipschitz function in $\overline\Omega$, and $\mathbf{c}\eta$ vanishes on~$\partial\Omega$. 
Then, 
since
\begin{equation}\label{eq:parts}
\int_{\Omega} |\nabla \xi|^2\,dx = \int_{\Omega} \bigl(  |\nabla \mathbf{c}|^2 \eta^2 + \mathbf{c}\nabla\mathbf{c} \nabla \eta^2+  \mathbf{c}^2 |\nabla \eta|^2\bigr) \, dx,
\end{equation}
integrating by parts the second term of the last integral we see that 
\begin{equation}\label{eq:07}
\int_{\Omega} \bigl( \Delta \mathbf{c}+f'(u)\mathbf{c}\bigr) \mathbf{c}\,\eta^{2}\, dx \leq 
\int_{\Omega} \mathbf{c}^{2}|\nabla \eta|^{2} \, dx.
\end{equation}

The key point here is that, if we take $\eta$ vanishing on the boundary, then we have the freedom to make choices for the function $\mathbf{c}$ having arbitrary values on $\partial\Omega$.
Note also that expression \eqref{eq:07} brings the linearized operator $\Delta+f'(u)$, acting on $\mathbf{c}$, into play. The rest of the paper will make three different crucial choices for the function~$\mathbf{c}$: \eqref{test1} and \eqref{test2} for interior regularity and \eqref{test3} for flat-boundary regularity.

The proof of H\"older regularity is based on putting together three estimates. In the interior case, the first one, and where the restriction $n\leq 9$ appears, is the following. Here $f\geq 0$ is not needed. 

Throughout the paper we will use the notation
\begin{equation}\label{notation r}
r=|x| \qquad\text{and}\qquad u_r=\frac{x}{|x|}\cdot \nabla u.
\end{equation}

\begin{one}\label{prop:1} \hspace{-1mm}(\cite[Lemma 2.1]{CFRS}) {\it
Let $u\in C^\infty (\overline B_1)$ be a stable solution of $-\Delta u=f(u)$ in $B_1\subset \R^n$, for some function $f\in C^{1}(\R)$.

If $3 \leq n\leq 9$, then 
\begin{equation}\label{ineq1}
\int_{B_\rho}r^{2-n} u_{r}^2\,dx \leq C\rho^{2-n}\int_{B_{3\rho/2}\setminus B_\rho}|\nabla u|^2\,dx
\end{equation}
for all $\rho< 2/3$, where $C$ is a dimensional constant.
}\end{one}

This result will be proven using \eqref{eq:07} with 
\begin{equation}\label{test1}
\mathbf{c}= x\cdot\nabla u = ru_r \quad\text{ and } \quad \eta=r^{(2-n)/2}\zeta,
\end{equation}
where $\zeta$ is a cut-off function. Recall that $r=|x|$.

Note that the proposition requires $n\geq 3$. However, adding superfluous independent variables to the solution, as in \cite{CFRS}, we will see that it can be used to establish Theorem~\ref{thm:0} also in dimensions one and two.

The following remark, that can be avoided in a first reading, motivates the second and third ingredients presented in this section.

\begin{remark}\label{rem:hole}
If the right-hand side of estimate \eqref{ineq1} had $u_r^2$ as integrand, instead of $|\nabla u|^2$, then H\"older regularity of $u$ would be a simple consequence of it, by the ``hole filling technique''. This will be shown in detail later in the paper. Instead, in Appendix~\ref{app:example} we will show that in the actual form \eqref{ineq1}, Proposition~\ref{prop:1} (even when used with radial derivatives taken with respect to any other base point, and not only the origin) is not enough to deduce an interior $L^\infty$ bound for~$u$. This is true even assuming that $u$ is an unstable\footnote{Instead, the $L^\infty$ bound holds among stable solutions, by Theorem~\ref{thm:0}, when $n\leq 9$.}  solution to \eqref{eq:PDE} for some nonnegative nonlinearity $f$ and $n\leq 9$. We emphasize this fact since we ---and some other experts--- initially thought  that the estimate of Proposition~\ref{prop:1}, used with radial derivatives taken with respect to any base point and not only the origin, could suffice. This is not the case indeed, and therefore, still having Proposition~\ref{prop:1} at hand, to get H\"older regularity one needs to exploit again the stability of the solution~$u$.

Notice that the quantities in \eqref{ineq1} are rescale invariant. That is, they are adimensional quantities ---like the $L^\infty$ norm of $u$ in $B_\rho$ is, as well. At the same time, the class of all stable solutions for some nonnegative nonlinearity is also rescale invariant. Hence, we may assume $\rho=1$ in \eqref{ineq1} ---in which case $\rho^{2-n}$ is both bounded from above and below in the annulus $B_{3/2}\setminus B_1$.
In \cite{CFRS} the proof of H\"older regularity continued from \eqref{ineq1} by controlling $|\nabla u|$ in $L^2$ by $u_r$ in $L^2$, under a {\it doubling assumption} on $|\nabla u|^2$. More precisely, \cite[Lemma 3.1]{CFRS} proved that
\begin{equation}\label{step3CFRS}
\Vert \nabla u\Vert^2_{L^{2}({B_{3/2})}}\leq C_\delta \Vert u_{r}\Vert^2_{L^{2}({B_{3/2}\setminus B_{1})}} \quad\text{ whenever }
\Vert \nabla u\Vert^2_{L^{2}({B_{1})}}\geq \delta \Vert \nabla u\Vert^2_{L^{2}({B_{2})}}
\end{equation}
and $u$ is a stable solution in $B_2$. This was accomplished through a contradiction-compactness argument. Compactness in the $W^{1,2}$ norm followed from the higher integrability bound \eqref{eq:W12 L1 int} (obtained in $B_{3/2}$ instead of $B_{1/2}$). The contradiction came from analyzing the validity of \eqref{step3CFRS} in the limiting case $u_r\equiv 0$. This is simple, since if $u_r\equiv 0$, then $u$ is $0$-homogeneous and, thus, $u$ is a superharmonic function for the Laplace-Beltrami operator on the unit sphere (recall that $f\geq 0$). This yields $u$ to be constant, and hence $|\nabla u|\equiv 0$.

In the current paper, we will avoid this contradiction-compactness proof and present a more recent quantitative proof from \cite{C22quant}. It allows to quantify the H\"older exponent~$\alpha$ of Theorem~\ref{thm:0}.
\end{remark}

Proposition \ref{prop:1} above is the first ingredient towards the interior H\"older regularity. The second one, already contained in \cite{CFRS}, uses the choice 
\begin{equation}\label{test2}
\mathbf{c} = |\nabla u|
\end{equation}
in \eqref{eq:07}, which was first introduced by Sternberg and Zumbrun \cite{SZ} while studying the Allen-Cahn equation. It will lead, in every dimension, to certain $L^2$ and $L^1$ second derivative estimates. From them, we will deduce the $L^{2}$ estimate for $\nabla u$ of the following proposition, which holds in all dimensions.  

Before stating it, let us mention that the two key test functions \eqref{test1} and \eqref{test2} used in the stability inequality correspond, respectively, to the following perturbations of the solution $u$ (one in the radial direction, the other in the normal direction to the level sets):
$$
u\left( x+ \ep r^{(2-n)/2}\zeta (x) x \right)
\quad\text{ and }\quad
u\Big( x+ \ep \eta (x)\frac{\nabla u (x)}{|\nabla u(x)|}  \Big),
$$
where $\zeta$ and $\eta$ are cut-off functions. Indeed, making the derivative of these expressions with respect to $\varepsilon$, at $\varepsilon =0$, we obtain the test functions $\xi=\mathbf{c} \eta$ in  \eqref{test1} and \eqref{test2}.

\begin{two}\label{prop:2}  \hspace{-1mm}(\cite[Proposition 2.5]{CFRS}) {\it
Let $u\in C^\infty (\overline B_1)$ be a stable solution of $-\Delta u=f(u)$ in $B_1\subset \R^n$, for some nonnegative function $f\in C^{1}(\R)$.

Then,
\begin{equation}\label{ineq2}
\|\nabla u\|_{L^{2}(B_{1/2})}  \le C \|u\|_{L^{1}(B_{1})}
\end{equation}
for some dimensional constant $C$.
}\end{two}

Proposition \ref{prop:2} provides the second step towards the H\"older regularity proof. Indeed, one starts from estimate \eqref{ineq1} of Proposition~\ref{prop:1} with, for instance,  $\rho=5/8$. Now, one covers the annulus in the right-hand side of \eqref{ineq1} by a dimensional number of balls with appropriate small radius. Next, in each of these balls one uses \eqref{ineq2}, properly rescaled, with $u$ replaced by the stable solution $u-t$, a solution of $-\Delta v= f(v+t)$, where $t$ is a constant to be chosen later. Adding the right-hand sides of  \eqref{ineq2} in all the balls, one concludes that the integral of $r^{2-n} u_{r}^2$ in $B_{1/2}$ can be controlled by the $L^1$ norm of $u-t$ in the annulus $B_1\setminus B_{1/2}$.

To control this last quantity, we will use the final third ingredient, namely, estimate \eqref{new-int-ur} in an annulus of Proposition~\ref{prop:3} below, first established in~\cite{C22quant}.\footnote{Instead, we will not use estimate \eqref{new-int-ur-ball} in balls, which will follow easily from \eqref{new-int-ur} and is stated here only for completeness.}  We replace the estimate \eqref{step3CFRS} used in \cite{CFRS} by the new bound \eqref{new-int-ur} in annuli. 

\begin{three}\label{prop:3}\hspace{-1mm}(\cite[Theorem 1.4]{C22quant}) {\it
Let $u\in C^\infty (\overline B_1)$ be superharmonic in the ball $B_1\subset \R^n$. 

Then, 
\begin{equation}\label{new-int-ur} 
\inf_{t\in\R} \Vert u-t\Vert_{L^{1}({B_{1}\setminus B_{1/2})}}\leq C\Vert u_{r}\Vert_{L^{1}({B_{1}\setminus B_{1/2})}}
\end{equation}
and
\begin{equation}\label{new-int-ur-ball} 
\inf_{t\in\R} \Vert u-t\Vert_{L^{1}({B_{1})}}\leq C\Vert u_{r}\Vert_{L^{1}({B_{1})}}
\end{equation}
for some dimensional constant $C$.
}\end{three}

As pointed out in \cite{C22quant}, the estimates of the proposition can be equivalently stated as
$$
\Vert u-\textstyle{\ave_{E}}u\Vert_{L^{1}(E)}\leq C\Vert u_{r}\Vert_{L^{1}(E)}
$$
with $E=B_{1}\setminus B_{1/2}$ and $E=B_{1}$, where $\ave$ denotes the average in a set.

Notice that with respect to \eqref{step3CFRS}, in Proposition~\ref{prop:3} we have replaced $L^2$ norms by $L^1$ norms, as well as $|\nabla u|$ by $u$ minus constants. In addition, here we do not require a doubling assumption, unlike \eqref{step3CFRS}. 

One could wonder about the validity of \eqref{new-int-ur} and  \eqref{new-int-ur-ball} for all functions~$u$, and not only superharmonic ones. For $n\geq2$, both inequalities would be false. Indeed, this can be seen with the family of smooth functions 
$$
u^\ep(x):=\varphi(|x|/\ep)\, x_n/|x|, 
$$
where $\ep\in (0,1)$, $\varphi$ is smooth and nonnegative, vanishes in $(0,1/4)$, and is identically~1 in $(1/2,\infty)$. Then, $u^\ep_r$ has support in $B_{\ep/2}\setminus B_{\ep/4}$ and its $L^1(B_1)$ norm is bounded above by $C\ep^{n-1}$. Instead, to control the $L^1(B_1\setminus B_{1/2})$ norm of $u^\ep-\textstyle{\ave_{B_1\setminus B_{1/2}}} u$ (or, similarly, of $u^\ep-\textstyle{\ave_{B_1}} u$) by below, if $\textstyle{\ave_{B_1\setminus B_{1/2}}} u\leq0$ (the case $\textstyle{\ave_{B_1\setminus B_{1/2}}} u\geq 0$ is treated similarly) we may integrate only in $\{1/2<|x|<1, x_n>0\}$. In this set $|u^\ep-\textstyle{\ave_{B_1\setminus B_{1/2}}} u|=u^\ep -\textstyle{\ave_{B_1\setminus B_{1/2}}} u\geq u^\ep=x_n/|x|$, and thus the $L^1(B_1\setminus B_{1/2})$ norm is larger than a positive constant independent of $\ep$.

\begin{remark}\label{vanish-on-bdry}
Instead, it is easy to check (this will be done in \cite{CSi}) that the radial Poincar\'e inequalities \eqref{new-int-ur} and \eqref{new-int-ur-ball} hold among all functions that vanish on $\partial B_1$ (in this case, there is no need to subtract a constant to the function $u$ for the inequalities to hold). They also hold for general harmonic functions, not necessarily vanishing on  $\partial B_1$.
\end{remark}

\begin{remark}\label{Lp-version}
In the forthcoming paper \cite{CSi}, Cabr\' e and Saari determine the range of exponents $p\in [1,\infty)$ for which the radial Poincar\'e inequalities \eqref{new-int-ur} and \eqref{new-int-ur-ball} hold, for superharmonic functions, with $L^1$ replaced by $L^p$ in both their left and right-hand sides. It turns out that inequality \eqref{new-int-ur} in an annulus holds if and only~if
$$
\text{either } n\leq 3, \text{ or } n\geq 4 \text{ and } 1\leq p <(n-1)/(n-3).
$$
Instead, \eqref{new-int-ur-ball} in a ball holds if and only if
$$
\text{either } n\leq 3, \text{ or } n\geq 4 \text{ and either } 1\leq p <(n-1)/(n-3) \text{ or } p>n.
$$

On the other hand, the statements of Remark~\ref{vanish-on-bdry} hold not only for $p=1$, but also for every $p\in [1,\infty)$ (see \cite{CSi}).
\end{remark}

Putting together Propositions \ref{prop:1}, \ref{prop:2} (after a covering  of the annulus by balls), and \ref{prop:3}, one concludes that the integral of $r^{2-n} u_{r}^2$ in $B_{1/2}$ is controlled by its integral on the annulus $B_1\setminus B_{1/2}$. This estimate and the ``hole filling technique" ---which refers to the hole of the annulus--- , used at all scales and iterated, will easily lead to the H\"older regularity of the stable solution $u$.

The proof of Proposition \ref{prop:3} uses a harmonic replacement together with the maximum principle ---here it will be crucial that we work with $L^1$ instead of $L^2$ norms. In this way, one reduces the problem to the case when $u$ is harmonic. But when $\Delta u=0$, we confront a Neumann problem for the homogeneous Laplace equation (since $u_r$, restricted to any sphere, may be thought as the Neumann data), for which the estimates of Proposition~\ref{prop:3} are not surprising and, in fact, easy to prove.

\subsection{The three ingredients for the flat boundary estimate}\label{subsec-bdry}
\
\smallskip

Let us now switch to estimates up to $\partial\R^n_+$, that is, regularity up to flat boundaries. Theorem~\ref{thm:0bdry} will follow from three results, which are the analogues of the interior ones described above. The first one is the following.

\begin{four}\label{prop:1bdry}  \hspace{-1mm}(\cite[(6.6)]{CFRS}) {\it
Let $u\in C^\infty(\overline{B^+_1})$ be a stable solution of $-\Delta u=f(u)$ in $B^+_1\subset\R^n$, with $u=0$ on $\partial^0 B^+_1$, for some nonlinearity $f\in C^1(\R)$.
 
If $3 \leq n\leq 9$, then 
\begin{equation*}
\int_{B^+_\rho}r^{2-n} u_{r}^2\,dx \leq C_\lambda\, \rho^{2-n}\int_{B^+_{\lambda\rho}\setminus B^+_{\rho}}|\nabla u|^2\,dx
\end{equation*}
for all $\lambda>1$ and $\rho< 1/\lambda$, where $C_\lambda$ is a constant depending only on $n$ and $\lambda$.
}\end{four}

For the proof of this result, as well as of the following one, the stability condition is used with a test function $\xi=\mathbf{c}\eta$ where it is important that $\eta$ does not vanish on~$\partial^0 B^+_1$. Thus, we will need to take $\mathbf{c}$ vanishing on the flat boundary. For the previous proposition, one makes the same  choice $\mathbf{c}=x\cdot\nabla u$ as in the interior case, which does vanish on the flat boundary.  

Instead, for the following result one cannot use the choice $\mathbf{c}=|\nabla u|$ (the one made in the analogue interior result), since this function does not vanish on $\partial^0 B^+_1$. In \cite{CFRS} we found that
\begin{equation}\label{test3}
\mathbf{c}=|\nabla u|- u_{x_n},
\end{equation}
which vanishes on $\partial^0 B^+_1$, is a suitable replacement. This allowed to establish the following $W^{1,2+\gamma}$ estimate. Note that while the previous proposition required no sign condition on $f$ and $u$, we now require $f$ to be nonnegative and nondecreasing ---but not necessarily convex--- and $u$ to be nonnegative.

\begin{five}\label{prop:2bdry}  \hspace{-1mm}(\cite[Propositions 5.2 and 5.5]{CFRS}) {\it
Let $u\in C^\infty(\overline{B^+_1})$ be a nonnegative stable solution of $-\Delta u=f(u)$ in $B^+_1\subset\R^n$, with $u=0$ on $\partial^0 B^+_1$.
Assume that $f\in C^1(\R)$ is nonnegative and nondecreasing.

Then,
\begin{equation*}
 \|\nabla u\|_{L^{2+\gamma}(B^+_{1/4})}  \le C \| u\|_{L^{1}(B^+_{1})}, 
\end{equation*}
for some dimensional constants $\gamma>0$ and $C$.
}\end{five}

To use this result as second ingredient towards the H\"older estimate, it would suffice to have the exponent $2+\gamma$ to be 2 (as it was the case in the interior regularity). However, as a novelty with respect to the interior theory, Proposition~\ref{prop:2bdry} will be used also to establish the control of $u$ by $u_r$ (that is, Proposition~\ref{prop:3bdry}). For this, having the integrability exponent $2+\gamma$ larger than 2 will be useful.

The last ingredient, on control up to the boundary of the solution $u$ by its radial derivative $u_r$, is where \cite{CFRS} used a contradiction-compactness argument (Step~2 of the proof of Proposition 6.3 in that paper) in addition to a new delicate result. To explain the new result, it is interesting to first look at the extreme case $u_r\equiv 0$ ---this can be seen as a first validity test for a boundary version of estimate \eqref{step3CFRS}. In fact, such case had to be analyzed also in the contradiction-compactness proof of \cite{CFRS} ---as well as in the interior case, as described above in Remark~\ref{rem:hole}. In contrast with the interior case, ``bad news'' arise here. Indeed, there exist nonzero nonnegative superharmonic functions $u$ in $B_1^+$ which vanish on $\partial^0  B_{1}^+$ and are 
0-homogeneous (i.e., for which $u_r\equiv 0$) ---obviously, they are not smooth at the origin, but belong to $W^{1,2}(B_{1}^+)$ when $n\ge 3$. An example is given by the function 
$$
u(x)= \frac{x_n}{|x|}.
$$ 
This is the key reason behind the fact that the new boundary result, Proposition~\ref{prop:3bdry} below, does not hold in the class of superharmonic functions which are smooth up to the flat boundary, as we will show in all detail in Remark~\ref{rk:not-superh}.

As a consequence, the contradiction in \cite{CFRS} could not come from the limiting function being superharmonic ---as in the interior case---, but from a stronger property. Namely, Theorem~4.1 of \cite{CFRS} established that the class of $W^{1,2}$ functions~$u$ for which there exists a nonnegative, nondecreasing, and convex nonlinearity~$f$ (with $f$ blowing-up, perhaps, at the value $\sup u$)
 such that $u$ is a stable solution of $-\Delta u=f(u)$, is closed under the $L^1$-convergence of the functions~$u$  in compact sets. Note that the class does not correspond to stable solutions of only one equation, but of all equations with such nonlinearities. This is a very strong result, of interest by itself, which will not be needed in the quantitative proof from~\cite{C22quant} presented in the current paper. The work \cite{CFRS} used this compactness result to find that, in the extreme case discussed above, the limiting function $u$ would be not only superharmonic, but also a solution of an equation  $-\Delta u=g(u)$ for some nonlinearity~$g$ with the properties listed above. This gave a contradiction, since then $g(u)$ would be $0$-homogeneous (being $u_r$ identically 0), while $-\Delta u$ would be $-2$-homogeneous.

Instead, the simpler quantitative proof of boundary regularity will use estimate \eqref{introbdryradial} below, which is the main result of \cite{C22quant}.

\begin{six}\label{prop:3bdry}\hspace{-1mm}(\cite[Theorem 1.9]{C22quant}){\it
Let $u\in C^\infty(\overline{B_{1}^+})$ be a nonnegative stable solution of $-\Delta u=f(u)$ in $B_{1}^+\subset\R^n$, with $u=0$ on $\partial^0  B_{1}^+$. Assume that $f\in C^1(\R)$ is nonnegative, nondecreasing, and convex.
 
Then,
\begin{equation}\label{introbdryradial}
\| u\|_{L^1(B_{1}^+\setminus B_{1/2}^+)} \le C \|u_r\|_{L^1({B_{1}^+\setminus B_{1/2}^+)}}
\end{equation}
and
\begin{equation}\label{introbdryradial-ball}
\| u\|_{L^1(B_{1}^+)} \le C \|u_r\|_{L^1({B_{1}^+)}}
\end{equation}
for some dimensional constant $C$.
}
\end{six}

Several comments are in order.

We will show in Remark~\ref{rk:not-superh} that, unlike the interior case of Proposition~\ref{prop:3}, estimate \eqref{introbdryradial} does not hold in the larger class of nonnegative superharmonic functions in~$B_1^+$ which are smooth up to the boundary of $B_1^+$ and vanish on $\partial^0 B^+_1$. 

In Remark~\ref{rk:bdry-ngeq3} we will prove that, at least for $n\geq3$, Proposition \ref{prop:3bdry} also holds when replacing \eqref{introbdryradial}, or  \eqref{introbdryradial-ball}, by the stronger bound
$$
\| u\|_{L^1(B_{1}^+)} \le C \|u_r\|_{L^1({B_{1}^+\setminus B_{1/2}^+)}} \quad\text{ (if $n\ge 3$)}.
$$

Proving Proposition~\ref{prop:3bdry} was rather delicate and required new ideas, described below. It uses not only the semilinear equation for $u$, but also the stability condition.  Furthermore, it requires the nonlinearity to satisfy $f\ge 0$, $f'\ge 0$, and $f''\ge 0$. Note that these are the exactly same assumptions needed in the contradiction-compactness proof in \cite{CFRS}. This fact came partly as a surprise, since the quantitative proof of \cite{C22quant} and the one of  \cite{CFRS} are based on very different ingredients, as described next.

The proof of Proposition \ref{prop:3bdry} originates from the identity
\begin{equation}\label{origin:bdry}
-2 \Delta u + \Delta (x\cdot\nabla u)  = -f'(u) \, x\cdot\nabla u=-f'(u) r u_r,
\end{equation}
which is easily checked by computing $\Delta (x\cdot\nabla u)$.\footnote{This is the same identity as \eqref{basic lin} in the interior proof. It is the crucial expression leading to the requirement $n\leq 9$ for interior regularity.}  
After multiplying \eqref{origin:bdry} by an appropriate test function and integrating by parts in a half-annulus, it is easily seen that the left-hand side bounds the $L^1$ norm of $u$ by below, modulus an error which is admissible for  \eqref{introbdryradial}: the $L^1$ norm of $x\cdot \nabla u=ru_r$. The main difficulty is how to control the right-hand side of \eqref{origin:bdry} by above, in terms of only $u_r$. This is delicate, due to the presence of $f'(u)$. We will explain the details in Section~\ref{sect:boundary-L1radial}. The proof involves introducing the functions $u_\lambda(x):=u(\lambda x)$, interpreting the radial derivative $x\cdot \nabla u (\lambda x)$ as $\frac{d}{d\lambda} u_\lambda (x)$, considering a ``$u_\lambda$-version'' of \eqref{origin:bdry}, and the key idea of averaging such version  in $\lambda$ to reduce the number of derivatives falling on the solution $u$ through the proof arguments.

\subsection{Open problems and related equations}\label{subsec-open-related}
\

\smallskip
1.3.1. The assumption $f\geq 0$ is not needed in the interior $L^\infty$ estimates of \cite{C10,C19} for $n\leq 4$, nor in the  interior H\"older bound of \cite{PZZ2} for $n\leq 5$. 
It is neither needed in the bounds for $n\leq 9$ of the radial case, \cite{CC}. In the general nonradial case, it is not known if an interior $L^\infty$ estimate could hold for $6\leq n\leq 9$ without the hypothesis $f\geq 0$.

For future developments, it is of interest to keep in mind the test functions used in the stability inequality \eqref{stabilityLip}. Up to trivial cut-off functions $\zeta$, \cite{C10} used $\xi= |\nabla u| \varphi (u)\zeta$ for some appropriate function $\varphi$. Both \cite{CFRS} and the current paper use, for interior regularity, the two test functions $\xi=\mathbf{c}\eta$ given by \eqref{test1} and \eqref{test2} (this last one with $\eta$ being a cut-off). Instead, \cite{C19} and the recent \cite{PZZ2} use
$$
\xi= |\nabla u| r r^{-\beta} \zeta
$$
for some positive exponents $\beta$ ---\cite{PZZ2} takes the optimal exponent $\beta=(n-2)/2$.

\smallskip
1.3.2. Our boundary regularity results require $f\geq 0$, $f'\geq 0$, and $f''\geq 0$. Both the proof in \cite{CFRS} and the one from \cite{C22quant} in the current paper, which are completely different, turn out to need these three conditions. It is not known if these hypotheses could be relaxed.

\smallskip
1.3.3. Bruera and Cabr\'e~\cite{BC} have extended some of the techniques of \cite{CFRS} to stable solutions of MEMS problems (see, e.g.\ \cite{EGG, Lin-Yang}) to reach a regularity result in the optimal range $n\leq 6$. Here one has the equation $-\Delta u =\lambda f(u)$ for positive nonlinearities $f:[0,1)\to\R$ which blow-up at $u=1$ in such a way that $\int_0^1 f=+\infty$. In this new setting, a solution is said to be regular when $\Vert u\Vert_{L^\infty(\Omega)}< 1$.\footnote{Note that the results of \cite{CFRS} apply to this class of blow-up nonlinearities ---in fact, they appeared naturally while proving the results of  Section~4 in that paper. Thus, Corollary 4.3 of \cite{CFRS} gives H\"older regularity of stable solutions~$u$ to MEMS problems when $n\leq 9$. However, this does not prevent the solution to be singular in the new meaning, that is, to have $u=1$ at some point (and thus $-\Delta u =f(u)=+\infty$ at that point, from what we see $u\notin C^2$).} The result requires, however, a Crandall-Rabinowitz type assumption on the nonlinearity, more precisely, that $\liminf_{t\uparrow 1} \left(ff''/(f')^2\right) (t) >1$ ---or, equivalently, that $f^{-a}$ is concave near~1 for some $a>0$.

\smallskip
1.3.4. A Crandall-Rabinowitz type assumption on $f$ (a weaker one) is also used by Figalli and Franceschini~\cite{FF}, where they prove partial regularity results for stable solutions in arbitrary dimensions. This substantially improves the Morrey regularity results for $n \geq 10$ of~\cite[Theorem 1.9]{CFRS}.

\smallskip
1.3.5. The results of \cite{CFRS} have been extended by Figalli and Zhang~\cite{Fig-Zhang} to finite Morse index solutions, a larger class than stable solutions. Here the nonlinearity must be required to be supercritical, in an appropriate sense.  It would be interesting to make further progress on finite Morse index solutions, both in bounded domains and in the entire space. See also the presentation and open problems in~\cite{Wang}.

\smallskip
1.3.6. Dupaigne and Farina~\cite{Dup-Far} have used the results of \cite{CFRS} to establish Liouville theorems for entire stable solutions. A very general one, for dimensions $n\leq 10$ and all nonlinearities $f\geq 0$, applies to all stable solutions which are bounded by below. Still, \cite{Dup-Far} and \cite{Wang} raise some open questions in this topic.

\smallskip
1.3.7. The interior result of \cite{CFRS} has been extended to the $p$-Laplacian by Cabr\'e, Miraglio, and Sanch\'on~\cite{CMS}.\footnote{For general $p>1$, optimal bounds for the extremal
solution  when $f(u)=\lambda e^u$ (extending the  results of Crandall and Rabinowitz~\cite{CR} for $p=2$) were obtained by Garc\'{\i}a-Azorero, Peral, and Puel~\cite{GarPe92,GarPePu94}.} This work reaches the optimal dimension for interior regularity of stable solutions when $p>2$. It remains an open problem to accomplish the expected optimal dimension for $1<p<2$.

\smallskip
1.3.8. For the equation $(-\Delta)^su=f(u)$, with $0<s<1$,  involving the fractional Laplacian, the optimal dimension for regularity of stable solutions has only been reached when $f(u)=\lambda e^u$. This was accomplished in a remarkable paper by Ros-Oton~\cite{R-O}. The dimension found in that paper, which depends on~$s\in (0,1)$, is expected to also give the correct range of dimensions for other nonlinearities. This is however an open problem, even in the radial case. The most recent progress is the paper~\cite{CSz} by Cabr\' e and Sanz-Perela in the general nonradial case, where the techniques of \cite{CFRS} are extended for the half-Laplacian ($s=1/2$) to prove interior regularity in dimensions $n\leq 4$ for all nonnegative convex nonlinearities ---here, with $s=1/2$, one expects $n\leq 8$ to be the optimal result.

\bigskip\medskip
\centerline{\large{\sc Part I: Interior regularity}}
\addtocontents{toc}{ \vspace{.2cm}\textsc{\hspace{.2cm} Part I: Interior regularity  \vspace{.1cm}}}
\medskip

This first part of the article concerns interior regularity. Thanks to the new result of Proposition~\ref{prop:3}, which is proven in Section \ref{sect:L1byRad}, we avoid the contradiction-compactness argument of \cite{CFRS}. 

In addition, our proof of the $W^{1,2}$ estimate of Proposition \ref{prop:2} (given in Section~\ref{sect:W12}) is more elementary than that of \cite{CFRS}, which needed the higher integrability result~\eqref{eq:W12 L1 int} for $|\nabla u|$. While this last result was obtained through the Sobolev inequality and some estimates on each level set of the solution (as presented in Section~\ref{sect:higher} below, for completeness), we will instead use a new and simple interpolation inequality.

\section{The weighted $L^{2}$ estimate for radial derivatives}
\label{sect:interior-weighted}

The following is the key estimate towards the interior regularity result. In particular, it is here (and only here) where the condition $n\leq 9$ arises. It is proven by using the test function $\xi=(x\cdot \nabla u) \eta = ru_r\eta$ in the stability condition and by later taking $\eta=r^{(2-n)/2}\zeta$ for some cut-off function $\zeta$. Recall the notation \eqref{notation r} for $r$ and~$u_r$.

Throughout the paper, some subscripts denote partial derivatives, that is, $u_i=\partial_iu,$ $u_{ij}=\partial_{ij}u$, etc.

\begin{lemma}{\rm(\cite[Lemma 2.1]{CFRS})}\label{conseqestab2}
Let $u\in C^\infty (\overline B_1)$ be a stable solution of $-\Delta u=f(u)$ in $B_1\subset \R^n$,  for some function $f\in C^{1}(\R)$.

Then, 
\begin{equation}\label{eq:firstest}
 \int_{B_1} \Big( |\nabla u|^2  \big\{(n-2)\eta +2 x\cdot \nabla \eta \big\}\eta
 - 2(x\cdot \nabla u) \nabla u\cdot \nabla(\eta^2) 
 - |x\cdot \nabla u|^2 |\nabla \eta|^2 \Big)\,dx \le 0
\end{equation}
for all Lipschitz functions $\eta$ vanishing on $\partial B_1$.
As a consequence, 
\begin{equation}\label{firstest-n-2}
\begin{split}
\frac{(n-2)(10-n)}{4}\int_{B_1} & r^{2-n}  u_r^2  \zeta^2\, dx \\ 
&\hspace{-2cm} \leq \int_{B_1}  (-2) r^{3-n} |\nabla u|^2\zeta \zeta_r\, dx   
+\int_{B_1} 4 r^{3-n} u_r \zeta\,  \nabla u\cdot\nabla\zeta \, dx \\
& \hspace{-1cm}
+\int_{B_1} (2-n) r^{3-n} u_r^2\zeta \zeta_r  \, dx
+\int_{B_1} r^{4-n} u_r^2 |\nabla\zeta|^2 \, dx
\end{split}
\end{equation}
for all Lipschitz functions $\zeta$ vanishing on $\partial B_1$. 
\end{lemma}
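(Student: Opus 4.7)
\bigskip

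\noindent\textbf{Proof plan.} My plan is to derive \eqref{eq:firstest} from the abstract stability inequality \eqref{eq:07} applied with the specific choice $\mathbf{c}=x\cdot\nabla u$, and then to obtain \eqref{firstest-n-2} by substituting $\eta = r^{(2-n)/2}\zeta$ into \eqref{eq:firstest} and carrying out the algebra. The step where $n\leq 9$ will enter is the coefficient collection in the second part.

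First, using $-\Delta u=f(u)$, I will differentiate to get the key identity
\begin{equation*}
\Delta (x\cdot\nabla u) = 2\Delta u + x\cdot \nabla(\Delta u) = -2f(u)-f'(u)\,(x\cdot\nabla u),
\end{equation*}
so that $\Delta \mathbf{c} + f'(u)\mathbf{c} = -2f(u)=2\Delta u$ for $\mathbf{c}=x\cdot\nabla u$. Plugging into \eqref{eq:07} and then integrating by parts in $B_1$ (where boundary terms vanish because $\eta$ does), I compute
\begin{equation*}
\int_{B_1} 2\Delta u\,(x\cdot\nabla u)\,\eta^2\,dx
= -\int_{B_1} 2\nabla u\cdot \nabla\bigl[(x\cdot\nabla u)\eta^2\bigr]\,dx,
\end{equation*}
and use the pointwise identity $\nabla u\cdot \nabla(x\cdot\nabla u) = |\nabla u|^2 + \tfrac{1}{2}\,x\cdot\nabla|\nabla u|^2$. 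A further integration by parts of the term $\int \eta^2\, x\cdot\nabla|\nabla u|^2\,dx$, using $\operatorname{div}(x\eta^2) = n\eta^2 + 2\eta\,x\cdot\nabla\eta$, yields the $\{(n-2)\eta + 2x\cdot\nabla\eta\}\eta\,|\nabla u|^2$ piece and the $-2(x\cdot\nabla u)\nabla u\cdot\nabla(\eta^2)$ piece. Combining with the right-hand side $\int \mathbf{c}^2|\nabla\eta|^2\,dx = \int |x\cdot\nabla u|^2|\nabla \eta|^2\,dx$ produces \eqref{eq:firstest}.

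For \eqref{firstest-n-2}, I will substitute $\eta = r^{(2-n)/2}\zeta$. Using $\nabla r = x/r$, direct computation gives
\begin{equation*}
(n-2)\eta + 2x\cdot\nabla\eta = 2\,r^{(4-n)/2}\zeta_r,
\qquad
|\nabla \eta|^2 = \tfrac{(n-2)^2}{4}r^{-n}\zeta^2 + (2-n)r^{1-n}\zeta\zeta_r + r^{2-n}|\nabla \zeta|^2,
\end{equation*}
and $\nabla u\cdot\nabla\eta = \tfrac{2-n}{2}r^{-n/2}\zeta\,u_r + r^{(2-n)/2}\nabla u\cdot\nabla\zeta$. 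Plugging these into \eqref{eq:firstest} and using $x\cdot\nabla u = ru_r$, the coefficient in front of $\int r^{2-n}u_r^2\zeta^2\,dx$ is
\begin{equation*}
2(n-2) - \tfrac{(n-2)^2}{4} \;=\; \tfrac{(n-2)(10-n)}{4},
\end{equation*}
which is exactly where the restriction $n\leq 9$ becomes crucial for the sign. Moving all remaining terms to the right gives \eqref{firstest-n-2}.

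The main obstacle I foresee is that, for $n\geq 3$, the weight $r^{(2-n)/2}$ makes $\eta=r^{(2-n)/2}\zeta$ fail to be Lipschitz at the origin, so \eqref{eq:07} cannot be applied directly. I will handle this by a standard truncation: replace $\zeta$ by $\zeta_\varepsilon := \zeta\,\varphi_\varepsilon(r)$ where $\varphi_\varepsilon$ is a smooth cutoff vanishing on $\{r\leq \varepsilon\}$ and equal to $1$ on $\{r\geq 2\varepsilon\}$, so that $\eta_\varepsilon = r^{(2-n)/2}\zeta_\varepsilon$ is admissible in \eqref{eq:firstest}. The extra contributions supported in $\{\varepsilon < r < 2\varepsilon\}$ can be controlled by $C\,\varepsilon^{2-n}\cdot |B_{2\varepsilon}\setminus B_\varepsilon|\,\|\nabla u\|_{L^\infty(B_1)}^2 = O(\varepsilon^2)$ (using the smoothness of $u$), and hence vanish as $\varepsilon \to 0$; letting $\varepsilon\to 0$ by monotone/dominated convergence recovers \eqref{firstest-n-2} for all Lipschitz $\zeta$ vanishing on $\partial B_1$.
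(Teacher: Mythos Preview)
Your proof is correct and follows essentially the same approach as the paper: the same choice $\mathbf{c}=x\cdot\nabla u$ in \eqref{eq:07}, the same Pohozaev-type integration by parts to reach \eqref{eq:firstest}, and the same substitution $\eta=r^{(2-n)/2}\zeta$ for \eqref{firstest-n-2}. The only cosmetic difference is the regularization at the origin: the paper truncates the singular weight via $\eta_\varepsilon=\min\{r^{(2-n)/2},\varepsilon^{(2-n)/2}\}\zeta$ and passes to the limit in \eqref{eq:firstest} by dominated convergence (since each term is bounded by $Cr^{2-n}|\nabla u|^2\in L^1$), whereas you cut off $\zeta$ near the origin---both are standard and yield the same conclusion.
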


\begin{proof}
We first establish \eqref{eq:firstest}.
We choose $\mathbf{c}(x):=x\cdot \nabla u(x)$. By a direct computation 
we have that
\begin{equation}\label{basic lin}
\Delta \mathbf{c}=x\cdot \nabla \Delta u +2\sum_{i=1}^n u_{ii}=-f'(u)\mathbf{c}+2\Delta u\qquad\text{in } B_1.
\end{equation}
Substituting this identity in \eqref{eq:07} we get\footnote{The same computation arises in the proof of the classical Pohozaev identity.}
\begin{align*}
\int_{B_1} |x\cdot \nabla u|^{2}&\left|\nabla \eta\right|^{2} \,dx \geq 
\int_{B_1}\bigl( \Delta \mathbf{c}+f'(u)\mathbf{c}\bigr) \mathbf{c}\,\eta^{2}\, dx=2\int_{B_1} (x\cdot \nabla u)\Delta u\,\eta^{2}\, dx\\
&\hspace{-.6cm} =\int_{B_1}\Big({\rm div}\bigl(2(x\cdot \nabla u)\nabla u - |\nabla u|^2x\bigr)+(n-2)|\nabla u|^2\Bigr)\eta^2\,dx\\
&\hspace{-.6cm} =\int_{B_1}\Big(-2(x\cdot \nabla u) \nabla u\cdot \nabla (\eta^2) +|
\nabla u|^2x\cdot \nabla (\eta^2)+(n-2)|\nabla u|^2\eta^2\Bigr)\,dx,
\end{align*}
which establishes \eqref{eq:firstest}.

Next, we deduce \eqref{firstest-n-2}.
Given $a\in (0,n)$,
we would like to take $\eta:=r^{-a/2}\zeta$, with~$\zeta$ Lipschitz and vanishing on $\partial B_1$, as test function in \eqref{eq:firstest}.
Since $\eta$ might not be Lipschitz at the origin, we approximate it by the Lipschitz function
\[\eta_\varepsilon:= \min\{r^{-a/2},\varepsilon^{-a/2}\}\zeta\]
for $\varepsilon\in(0,1)$, which agrees with $\eta$ in $B_1\setminus B_\varepsilon$.
We have that $\eta_\varepsilon\to\eta$ and $\nabla \eta_\varepsilon\to \nabla \eta$ a.e.\ in $B_1$ as $\varepsilon\downarrow0$.
At the same time, when $\eta$ is chosen to be $\eta_\varepsilon$, every term in \eqref{eq:firstest}  is bounded in absolute value by $Cr^{-a}|\nabla u|^2\leq \tilde Cr^{-a}\in L^1(B_1)$, since $u\in C^\infty (\overline B_1)$.
Hence, the dominated convergence theorem gives that \eqref{eq:firstest} also holds with $\eta:=r^{-a/2}\zeta$.

Now, noticing that
\begin{equation*}
x\cdot \nabla \eta=-\frac{a}2r^{-a/2}\zeta+r^{-a/2}x\cdot \nabla \zeta,\quad \nabla (\eta^2)=-ar^{-a-2}\zeta^2x+2r^{-a}\zeta \nabla \zeta,
\end{equation*}
and
\begin{equation*}
|\nabla \eta|^2=\Big|-\frac{a}2r^{-a/2-2}\zeta x+r^{-a/2}\nabla \zeta \Big|^2=\frac{a^2}4r^{-a-2}\zeta^2 + r^{-a}|\nabla \zeta|^2-ar^{-a-2}\zeta (x\cdot \nabla \zeta),
\end{equation*}
\eqref{firstest-n-2} follows from \eqref{eq:firstest} by choosing $a=n-2$.
\end{proof}

From the lemma, we easily deduce our key estimate in dimensions~$3\leq n\leq9$.

\begin{proof}[Proof of Proposition \ref{prop:1}]
Given $\rho \in (0,2/3)$, consider a Lipschitz function $\zeta$ satisfying $0\leq \zeta\leq 1$, $\zeta_{|B_{\rho}}=1$, $\zeta_{|\R^n\setminus B_{3\rho/2}}=0$,
and  $|\nabla \zeta| \leq C/\rho$. Using it in \eqref{firstest-n-2}
and noticing that $r=|x|$ is comparable to $\rho$ inside ${\rm supp}(\nabla \zeta)\subset \overline B_{3\rho/2}\setminus B_\rho$, the result follows.
\end{proof}

\section{Hessian and $W^{1,2}$ estimates}
\label{sect:W12}

To establish the $W^{1,2}$ estimate of Proposition \ref{prop:2} we use again the stability of~$u$, but choosing now another test function: $\xi=|\nabla u|\eta$ with $\eta$ a cut-off.
This choice was introduced by Sternberg and Zumbrun~\cite{SZ} in 1998 while studying the Allen-Cahn equation. It leads to the following result.

Define
\begin{equation}\label{defAAA}
\J :=  
\begin{cases} \left( \sum_{ij} u_{ij}^2  - \sum_{i} \left(\sum_{j} u_{ij} \frac{u_j}{|\nabla u|} \right)^2  \right)^{1/2} \quad  \quad &\mbox{if  } \nabla u \neq 0
\\
0 &\mbox{if  } \nabla u=0.
\end{cases}
\end{equation}

\begin{lemma}[\cite{SZ}]\label{conseqestab}
Let $u\in C^\infty (\overline B_1)$ be a stable solution of $-\Delta u=f(u)$ in $B_1\subset \R^n$,  for some function $f\in C^{1}(\R)$. 

Then, 
\begin{equation}\label{stab-real-c}
\int_{B_1}  \J^2 \eta ^2\, dx \le \int_{B_1} |\nabla u|^2 |\nabla \eta|^2\, dx
\end{equation}
for all Lipschitz functions $\eta$ vanishing on $\partial B_1$. 
\end{lemma}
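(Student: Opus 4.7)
The plan is to apply the stability inequality \eqref{stabilityLip} with the Lipschitz test function $\xi = |\nabla u|\eta$ and then integrate by parts to produce a Bochner-type identity. Expanding the right-hand side of stability gives
\[
|\nabla\xi|^2 = \eta^2\,|\nabla|\nabla u||^2 + \nabla(|\nabla u|^2)\cdot\nabla\eta\,\eta + |\nabla u|^2|\nabla\eta|^2,
\]
where I have used the pointwise identity $2|\nabla u|\nabla|\nabla u| = \nabla(|\nabla u|^2)$ (valid a.e.). The key observation is that the middle term equals $\tfrac12\nabla(|\nabla u|^2)\cdot\nabla(\eta^2)$, and since $|\nabla u|^2\in C^\infty(\overline B_1)$ standard integration by parts (the boundary term vanishes because $\eta_{|_{\partial B_1}}\equiv0$) converts it to $-\tfrac12\int\Delta(|\nabla u|^2)\eta^2\,dx$.

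Next, I would carry out a direct computation, differentiating twice the equation $-\Delta u = f(u)$, to obtain the Bochner identity
\[
\tfrac{1}{2}\Delta(|\nabla u|^2) \;=\; \sum_{ij} u_{ij}^2 + \nabla u \cdot \nabla(\Delta u) \;=\; \sum_{ij} u_{ij}^2 - f'(u)|\nabla u|^2.
\]
Substituting this into the expanded stability inequality, the two $f'(u)|\nabla u|^2\,\eta^2$ contributions cancel exactly, leaving
\[
\int_{B_1}\Bigl(\textstyle\sum_{ij}u_{ij}^2 - |\nabla|\nabla u||^2\Bigr)\eta^2\,dx \;\le\; \int_{B_1}|\nabla u|^2|\nabla\eta|^2\,dx.
\]
To close the proof, I would identify the left-hand integrand with $\J^2$: on $\{\nabla u\neq 0\}$, $\partial_k|\nabla u| = \tfrac{1}{|\nabla u|}\sum_i u_i u_{ik}$, so $|\nabla|\nabla u||^2 = \sum_k\bigl(\sum_i u_{ik}\tfrac{u_i}{|\nabla u|}\bigr)^2$ and therefore $\sum_{ij}u_{ij}^2 - |\nabla|\nabla u||^2 = \J^2$ exactly matches the definition \eqref{defAAA}.

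The main obstacle is that $|\nabla u|$ is only Lipschitz in general, and $|\nabla|\nabla u||$ is not pointwise defined on the critical set $\{\nabla u=0\}$. I would handle this in two ways. First, throughout the integration by parts I would work with $|\nabla u|^2 \in C^\infty$ rather than with $|\nabla u|$, so no derivative of $|\nabla u|$ is ever needed until the very last step. Second, for the pointwise identification above, I would invoke Stampacchia's lemma: the Lipschitz function $|\nabla u|$ attains its minimum on $\{\nabla u=0\}$, so $\nabla|\nabla u|=0$ a.e.\ on this set. There both terms $\J^2$ and $|\nabla|\nabla u||^2$ vanish a.e., while $\sum_{ij}u_{ij}^2\ge0$, so the integrand on the left dominates $\J^2$ in the whole ball. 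This yields \eqref{stab-real-c} and completes the proof.
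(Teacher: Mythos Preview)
Your proof is correct. It differs from the paper's main argument, which regularizes by taking $\mathbf{c}_\varepsilon=(|\nabla u|^2+\varepsilon^2)^{1/2}\in C^\infty$, computes directly that $(\Delta\mathbf{c}_\varepsilon+f'(u)\mathbf{c}_\varepsilon)\mathbf{c}_\varepsilon=\varepsilon^2 f'(u)+\J_\varepsilon^2\ge \varepsilon^2 f'(u)+\J^2$, plugs this into the integrated form \eqref{eq:07} of stability, and lets $\varepsilon\downarrow 0$. That route never faces the critical set and avoids Stampacchia altogether.

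Your approach is closer in spirit to the alternative proof the paper mentions in a footnote (attributed to \cite{CFRS}), which also relies on Stampacchia's theorem, though there it is used to conclude $D^2u=0$ a.e.\ on $\{\nabla u=0\}$. Your variant is slightly slicker: by doing the integration by parts at the level of the smooth function $|\nabla u|^2$ (via the Bochner identity) rather than computing $\Delta|\nabla u|$, you postpone all regularity issues to a single pointwise comparison at the end, where the inequality $\sum_{ij}u_{ij}^2-|\nabla|\nabla u||^2\ge \J^2$ a.e.\ suffices. The trade-off: the paper's $\varepsilon$-regularization is entirely self-contained, while your argument needs the (standard but nontrivial) fact that a Lipschitz function has vanishing weak gradient a.e.\ on each of its level sets.
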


\begin{proof}
We would like to take $\mathbf{c}:=|\nabla u|$ in the alternative form \eqref{eq:07} of stability.
A simple computation shows that 
\begin{equation}\label{real-c}
|\nabla u|\bigl(\Delta|\nabla u|+f'(u)|\nabla u|\bigr)=\mathcal A^2\quad \text{ in } \{\nabla u\neq0\}.
\end{equation}
This, together with \eqref{eq:07},  suggests that \eqref{stab-real-c} should hold. However a complete argument is needed, since we used integration by parts to derive \eqref{eq:07}, $\mathbf{c}=|\nabla u|$ may not be differentiable  where the gradient vanishes, and \eqref{real-c} in principle only holds in $ \{\nabla u\neq0\}$.

The following is the most elementary proof of the lemma.\footnote{See \cite{CFRS} for an alternative proof
which does not require to compute~$\Delta |\nabla u|$ and hence applies to~$C^{2}$ solutions $u$ ---it uses, however, Stampacchia's theorem (\cite[Theorem~6.19]{LiebLoss}) to ensure that $D^2u=0$ a.e.\ in $\{\nabla u=0\}$. A third proof, similar to the one we give here and that does not rely on Stampacchia's result, can be given following an argument for the $p$-Laplacian from~\cite{CMS}. It consists of using \eqref{eq:07} with $\mathbf{c}= |\nabla u| \phi (\abs{\nabla u}/\ep)$,
where $\phi\in C^\infty(\R)$ is nondecreasing, $\phi(t)=0$ for $t\leq1$, and $\phi(t)=1$ for $t\geq2$,  and letting $\ep\downarrow 0$ at the end.}
As in \cite{C10}, for $\varepsilon >0$ we consider 
\begin{equation}\label{cep1}
\mathbf{c}_\ep:=\left( |\nabla u|^2+\varepsilon^{2}\right)^{1/2}
\end{equation}
and introduce
\begin{equation}\label{cep2}
\J_\ep :=   \left( \sum\nolimits_{ij} u_{ij}^2  - \sum\nolimits_{i} \left(\sum\nolimits_{j} u_{ij} \frac{u_j}{\mathbf{c}_\ep} \right)^2  \right)^{1/2}.
\end{equation}
Note that we are taking the square root of positive numbers and that now $\mathbf{c}_\ep$ is a~$C^\infty$ function in all of $B_1$. Since $\Delta u_j+f'(u)u_j=0$ and $\J=0$ in $\{\nabla u=0\}$, it is simple to verify that 
\begin{equation}\label{cep3}
\left(\Delta \mathbf{c}_\ep +f'(u) \mathbf{c}_\ep\right)\mathbf{c}_\ep  = \ep^2 f'(u) + \J_\ep^2 \geq  \ep^2 f'(u) + \J^2 \quad\text{ in } B_1.
\end{equation}
Now, from \eqref{eq:07} used with $\mathbf{c}$ replaced by $\mathbf{c}_\ep$ we deduce that
\begin{equation*}
\int_{B_1}\left( \ep^2 f'(u) + \J^2 \right) \eta^2\, dx \le
\int_{B_1}\left(\left|\nabla u\right|^2
+\varepsilon^{2}\right)
\left|\nabla \eta\right|^2\,dx.
\end{equation*}
Letting $\varepsilon\downarrow0$, we conclude the lemma.
\end{proof}

We can now deduce two $L^1$ estimates for second derivatives. One of them, \eqref{ahgiohwiob1bis}, is weighted by the gradient and will lead to our $W^{1,2}$ a priori bound. We now assume $f\geq 0$ and use crucially the superharmonicity of the stable solution.

\begin{lemma}\label{lem:hessian}
Let $u\in C^\infty (\overline B_1)$ be a stable solution of $-\Delta u=f(u)$ in $B_1\subset \R^n$,  for some nonnegative function $f\in C^{1}(\R)$. 

Then,
\begin{equation}\label{estdivbis}
\Vert D^2 u \Vert_{L^1(B_{3/4})} \leq  C \Vert \nabla u \Vert_{L^{2}(B_{1})}
\end{equation}
and
\begin{equation}\label{ahgiohwiob1bis}
\Vert\, |\nabla u| \, D^2 u \, \Vert_{L^1(B_{3/4})} \leq C \Vert \nabla u \Vert^2_{L^{2}(B_{1})}
\end{equation}
for some dimensional constant $C$.
\end{lemma}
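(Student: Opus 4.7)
The plan is to combine the Sternberg--Zumbrun inequality of Lemma~\ref{conseqestab} with integration-by-parts identities that exploit the superharmonicity $-\Delta u = f(u) \geq 0$. The starting point is the pointwise decomposition
\[
|D^2 u|^2 = \J^2 + |\nabla|\nabla u||^2
\]
valid on $\{\nabla u \neq 0\}$, which follows from the definition of $\J$ together with the identity $|\nabla u|\nabla|\nabla u| = D^2 u\cdot \nabla u$. Consequently, $|D^2 u| \leq \J + |\nabla|\nabla u||$ a.e. Fix a cutoff $\eta \in C_c^\infty(B_1)$ with $\eta \equiv 1$ on $B_{3/4}$ and $|\nabla\eta|+|\Delta\eta|\leq C$.

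For the weighted estimate \eqref{ahgiohwiob1bis}, I would split
\[
\int_{B_{3/4}}|\nabla u||D^2 u|\,dx \leq \int |\nabla u|\,\J\,\eta^2\,dx + \int |\nabla u|\,|\nabla|\nabla u||\,\eta^2\,dx.
\]
The first integral is bounded by Cauchy--Schwarz and Lemma~\ref{conseqestab}:
\[
\int |\nabla u|\J \eta^2 \leq \|\nabla u\|_{L^2(B_1)}\|\J\,\eta\|_{L^2(B_1)} \leq C\|\nabla u\|_{L^2(B_1)}^2.
\]
The second integral equals $\tfrac12 \int|\nabla|\nabla u|^2|\,\eta^2$ since $D^2 u \cdot \nabla u = \tfrac12 \nabla|\nabla u|^2$. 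To bound this BV-type quantity, I would use the divergence identity
\[
\mathrm{div}\bigl(|\nabla u|^2 \nabla u\bigr) = 2\,\nabla u\cdot D^2 u\cdot \nabla u - |\nabla u|^2 f(u),
\]
combined with $f \geq 0$ and a further integration by parts against a unit vector field tracking the direction of $\nabla|\nabla u|^2$, using stability to dispose of the resulting tangential-Hessian terms. The unweighted bound \eqref{estdivbis} then follows from the same decomposition: the $\J$-contribution is $\leq |B_{3/4}|^{1/2}\|\J\|_{L^2(B_{3/4})} \leq C\|\nabla u\|_{L^2(B_1)}$ by Cauchy--Schwarz and Lemma~\ref{conseqestab}, while the normal piece $\int |\nabla|\nabla u||$ is handled by the same IBP analysis, together with a further Cauchy--Schwarz to pass from the weighted control on $|\nabla u|\,|\nabla|\nabla u||$ to the unweighted control.

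The hard part will be bounding $\int|\nabla|\nabla u|^2|\,\eta^2$. A naive IBP using $\partial_i(-\Delta u) = f'(u) u_i$ produces the identity
\[
\int |D^2 u|^2 \eta^2 = \int f'(u)|\nabla u|^2\eta^2 + \int |\nabla u|^2\bigl(|\nabla \eta|^2 + \eta\Delta\eta\bigr),
\]
so the problem reduces to controlling $\int f'(u)|\nabla u|^2 \eta^2$, which is not directly bounded by $\|\nabla u\|_{L^2(B_1)}^2$ from stability alone. The resolution relies on $f\geq 0$ through the supplementary IBP identity $\int f(u)^2 \eta^2 = \int f'(u)|\nabla u|^2\eta^2 + 2\int f(u)\,\eta\,\nabla u\cdot \nabla \eta$, obtained from $f(u) = -\Delta u$ by writing $\int f(u)(-\Delta u)\eta^2 = \int \nabla u\cdot \nabla(f(u)\eta^2)$. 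Combined with Young's inequality and Lemma~\ref{conseqestab}, this produces the cancellations needed to absorb the problematic term and close the estimates.
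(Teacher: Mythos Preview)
Your decomposition $|D^2u|\leq \J + |\nabla|\nabla u||$ is correct, and the $\J$-contributions are indeed handled by Cauchy--Schwarz plus Lemma~\ref{conseqestab}. The gap is in the ``hard part'': the chain of identities you propose for $\int|\nabla u|\,|\nabla|\nabla u||\,\eta^2$ does not close. Your final identity gives
\[
\int f'(u)|\nabla u|^2\eta^2 \;=\; \int f(u)^2\eta^2 \;-\; 2\int f(u)\,\eta\,\nabla u\cdot\nabla\eta,
\]
but the right-hand side now contains $\int f(u)^2\eta^2=\int(\Delta u)^2\eta^2$, an $L^2$ Hessian-type quantity which is \emph{not} bounded by $\|\nabla u\|_{L^2(B_1)}^2$ in general. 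There is no cancellation to exploit here: you have traded one uncontrolled term for another, and the loop cannot be closed with Young's inequality or stability. (Indeed, an $L^2$ bound on $D^2u$ in terms of $\|\nabla u\|_{L^2}$ is false for stable solutions; only $L^1$ is available.)

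The paper avoids this entirely by using a different pointwise decomposition: $|D^2u|\leq |\Delta u|+C\J$ a.e., obtained by writing $D^2u[\nu,\nu]=\Delta u-\mathrm{tr}\bigl(D^2u-(D^2u[\nu,\nu])\nu\otimes\nu\bigr)$ and bounding the trace by $C\J$. The point is that $\Delta u$ has a sign (since $f\ge0$), so $\int_{B_{3/4}}|\Delta u|=-\int\Delta u\,\zeta=\int\nabla u\cdot\nabla\zeta\leq C\|\nabla u\|_{L^2}$ directly. For the weighted bound, the sign of $\Delta u$ is again the key: one writes $-2|\nabla u|\Delta u=\mathrm{div}(|\nabla u|\nabla u)+|\nabla u|\,\mathrm{tr}\bigl(D^2u-(D^2u[\nu,\nu])\nu\otimes\nu\bigr)$, integrates the divergence by parts, and controls the trace term by $C|\nabla u|\J$. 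Your normal-derivative piece $|\nabla|\nabla u||$ has no sign and no analogous divergence structure, which is why your route stalls.
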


\begin{proof} We proceed as in Step 1 of the proof of Proposition 2.4 in \cite{CFRS}.
Define $\nu=\nabla u/|\nabla u|$  in the set $\{\nabla u\neq 0\}$ and $\nu=0$ in $\{\nabla u=0\}$. We claim that
\begin{equation}
\label{hwuighwiu2}
\big| D^2 u - (D^2u[\nu, \nu]) \nu\otimes\nu\big| \leq C\J \quad\text{ a.e.\ in } B_1
\end{equation}
and, as a consequence,
\begin{equation}
\label{HessbyLapl}
|D^2u|\leq |\Delta u|+C\J \quad\text{ a.e.\ in } B_1,
\end{equation}
for some dimensional constants $C$. 

Indeed, at points $x$ for which $\nabla u(x)\neq 0$, \eqref{hwuighwiu2} follows from the fact that $\J^2$, as defined in \eqref{defAAA}, is larger or equal than half the squared Hilbert-Schmidt norm of the matrix $D^2 u - (D^2u[\nu, \nu])\,\nu\otimes\nu$. This is easily seen by writing the symmetric matrix $D^2 u(x)$ in any orthonormal basis having $\nu(x)$ as last (say) basic vector. Note that  $D^2 u - (D^2u[\nu, \nu])\,\nu\otimes\nu$ contains all the Hessian matrix except for its last column. But all the entries in this last column with the exception of $u_{\nu \nu}$ also appear in the last row of $D^2 u - (D^2u[\nu, \nu])\,\nu\otimes\nu$. We conclude the validity, almost everywhere, of \eqref{hwuighwiu2} using Stampacchia's result (see for instance~\cite[Theorem~6.19]{LiebLoss}), which ensures that $D^2u=0$ a.e.\ in $\{\nabla u =0\}$.

Now, to prove \eqref{HessbyLapl}, notice that by \eqref{hwuighwiu2} it suffices to control $|D^2u[\nu, \nu]|$. To accomplish this, we simply rely on 
$$
D^2u[\nu, \nu] = \Delta u -  {\rm tr}\big( D^2 u - (D^2u[\nu, \nu]) \nu\otimes\nu\big)  
$$
and use again \eqref{hwuighwiu2}.

Now, choose a nonnegative function $\zeta\in C^\infty_c(B_1)$ with $\zeta\equiv1$ in $B_{3/4}$.
Then, since $-\Delta u \geq 0$, we have
\begin{equation}\label{LapL1}
\int_{B_{3/4}}|\Delta u|\,dx
\leq -\int_{B_1} \Delta u \,\zeta\,dx=\int_{B_1} \nabla u \cdot \nabla \zeta\,dx \leq C\|\nabla u\|_{L^2(B_1)}.
\end{equation}
This bound, together with \eqref{HessbyLapl} and Lemma~\ref{conseqestab}, gives \eqref{estdivbis}. 

To establish \eqref{ahgiohwiob1bis}, we begin from the pointwise identity
$$
{\rm div}(|\nabla u|\, \nabla u)  =  |\nabla u| \biggl( \sum\nolimits_{ij} \frac{u_{ij}u_i u_j}{|\nabla u|^2} + \Delta u\biggr) \quad\text{in } \{\nabla u\neq0\},
$$
from which we deduce
\begin{equation} \label{hwuighwiu}
 {\rm div}(|\nabla u|\, \nabla u) =    -|\nabla u|\,{\rm tr}\big( D^2 u - (D^2u[\nu, \nu]) \nu\otimes\nu\big)   +  2|\nabla u|\Delta u \quad\text{ a.e.\ in } B_1,
\end{equation}
where tr denotes the trace. Here we have used that $ {\rm div}(|\nabla u|\, \nabla u) = 0$ a.e.\ in $\{\nabla u=0\}$, which is a consequence of Stampacchia's theorem (\cite[Theorem~6.19]{LiebLoss}). Indeed, since $|\nabla u| u_j$ has weak derivatives locally in $L^1$ for every $j$,\footnote{This can be done replacing $|\nabla u|$ by  $(|\nabla u|^2+\varepsilon^2)^{1/2}$ in the definition of weak derivative, and letting $\varepsilon\downarrow 0$ at the end.}
Stampacchia's theorem ensures that $\partial_k (|\nabla u|u_j) = 0$ a.e.\ in $\{u_j=0\}$ for all $k$.

Next, let $\eta\in C^\infty_c(B_1)$ be a cut-off function with $\eta\equiv 1$ in $B_{3/4}$. We use \eqref{hwuighwiu}, \eqref {hwuighwiu2}, and Lemma \ref{conseqestab} to obtain
\begin{eqnarray}\label{pass0}
 \int_{B_{3/4}}  2|\nabla u|\,|\Delta u|\, dx &\le &  
 -\int_{B_{1}} 2|\nabla u|\,\Delta u \, \eta^2\,dx 
 \\  &&\hspace{-3cm}=-\int_{B_1} |\nabla u|\,{\rm tr}\big( D^2 u - (D^2u[\nu, \nu]) \nu\otimes\nu\big)\,  \eta^2\,dx -   \int_{B_1} {\rm div}(|\nabla u|\, \nabla u)\,\eta^2\,dx
 \nonumber\\&&\hspace{-3cm}
 \le  C\left(\int_{B_1} |\nabla u|^2\eta^2\,dx\right)^{1/2} \left(\int_{B_1} \J^2 \eta^2\,dx\right)^{1/2} +   \int_{B_1} \,|\nabla u|\, \nabla u \cdot \nabla (\eta^2)\,dx
 \nonumber\\&&\hspace{-3cm}\label{pass1}
 \le C \|\nabla u\|_{L^{2}(B_{1})}^2.
\end{eqnarray}
Using this bound and \eqref{HessbyLapl} (together with a similar argument as the previous one, to control the integral of $|\nabla u|\J$ in $B_{3/4}$), we conclude \eqref{ahgiohwiob1bis}.
\end{proof}

We now prove that the Dirichlet energy of a stable solution in a ball can be controlled by its $L^1$ norm in a larger ball. To get this result, we combine the previous weighted Hessian estimate with two interpolation lemmas from  Appendix~\ref{app:interp}. This proof is more elementary than the one of \cite{CFRS}, which was based on the more delicate $W^{1,2+\gamma}$ bound of Section~\ref{sect:higher} below.

\begin{proof}[Proof of Proposition \ref{prop:2}]
We cover $B_{1/2}$ (except for a set of measure zero) with a family of disjoint open cubes $Q_j$ of side-length small enough (depending only on $n$) such that $Q_j\subset B_{3/4}$.
We now combine the interpolation inequalities of  Propositions~\ref{prop5.2} and \ref{Nash} in each cube $Q_j$ (we rescale them to pass from the unit cube to the cubes $Q_j$), used with $p=2$, a given $\ep\in(0,1)$, and $\tilde\ep= \ep^{3/2}$ in  Proposition~\ref{Nash}.  We obtain that
\begin{equation*}
\int_{Q_j}\abs{\nabla u}^{2}dx \leq C\varepsilon \int_{Q_j}\abs{\nabla u}\lvert D^2u\rvert\,dx+ C\varepsilon \int_{Q_j}\abs{\nabla u}^2dx+C\varepsilon^{-2-\frac{3n}{2}}\left( \int_{Q_j}\abs{u}\,dx\right)^2
\end{equation*} 
and thus, using $Q_j\subset B_{3/4}$ and estimate \eqref{ahgiohwiob1bis}, that
\begin{equation*}
\int_{Q_j}\abs{\nabla u}^{2}dx \leq C\varepsilon \int_{B_1}\abs{\nabla u}^2dx+C\varepsilon^{-2-\frac{3n}{2}}\left( \int_{B_1}\abs{u}\,dx\right)^2.
\end{equation*} 
Adding up all these inequalities (note that the number of cubes $Q_j$ depends only on~$n$), we get
\begin{equation}\label{zhang}
\|\nabla u\|_{L^{2}(B_{1/2})}^2 \le C\varepsilon  \|\nabla u\|_{L^{2}(B_{1})}^2 + C\varepsilon^{-2-\frac{3n}{2}} \|u\|_{L^{1}(B_{1})}^2.
\end{equation}

Applying this estimate to the functions $u_{\rho,y}(\bar x):=u(y+\rho \bar x)$, where $\bar x\in B_1$ and $B_\rho(y)\subset B_1$ (note that $u_{\rho,y}$ is a stable solution to the semilinear equation $-\Delta u_{\rho,y}=f_{\rho}(u_{\rho,y})$ in~$B_1$ with $f_\rho(t)=\rho^2f(t)$, and thus all the previous results apply to $u_{\rho,y}$ as well), we conclude that
\begin{align*}
\rho^{n+2}\int_{B_{\rho/2}(y)}|\nabla u|^2\,dx&\leq 
C\varepsilon \rho^{n+2}\int_{B_{\rho}(y)}|\nabla u|^2\,dx
+ C\varepsilon^{-2-\frac{3n}{2}}\left(\int_{B_{\rho}(y)}|u|\,dx\right)^2\\
& \leq C\varepsilon \rho^{n+2}\int_{B_{\rho}(y)}|\nabla u|^2\,dx
+C\varepsilon^{-2-\frac{3n}{2}}\left(\int_{B_{1}}|u|\,dx \right)^2
\end{align*}
for every $\varepsilon\in(0,1)$.
By Lemma \ref{lem_abstract}, applied with
$\sigma(B):=\|\nabla u\|_{L^2(B)}^2$ and $\varepsilon$ sufficiently small, the result follows.
\end{proof}

\section{The radial derivative controls the function in $L^{1}$}
\label{sect:L1byRad}

In the previous section we have controlled the $W^{1,2}$ norm of a stable solution by its $L^1$ norm. We now prove, as claimed in Proposition~\ref{prop:3}, that the $L^1$ norm of the solution (in fact of any superharmonic function) in an annulus (respectively, in a ball) can be controlled, up to an additive constant, by the $L^1$ norm of its radial derivative, taken also in the annulus (respectively, in the ball). For this, we first need a similar result for harmonic functions.

\begin{lemma}{\rm (\cite[Lemma 2.1]{C22quant})}\label{lem:RadcontrolsHarm}
Let $v\in C^\infty(\overline B_1)$ solve $\Delta v=0$ in $B_1\subset \R^n$. Then,
\begin{equation}\label{Linfty}
\Vert v-v(0)\Vert_{L^{\infty}(\partial B_{1})}\leq 2n^{3/2}\Vert v_{r}\Vert_{L^{\infty}(\partial B_{1})}
\end{equation} 
and
\begin{equation}\label{L1}
\Vert v-t\Vert_{L^{1}(\partial B_{1})}\leq 2n^{3/2}\Vert v_{r}\Vert_{L^{1}(\partial B_{1})},
\end{equation} 
where $t:=\inf \left\{ \overline{t} : |\{v>\overline{t}\}\cap\partial B_{1}|\leq |\partial B_{1}|/2\right\}$ is the median of $v$.
\end{lemma}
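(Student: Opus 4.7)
The strategy rests on two basic observations. First, the auxiliary function $R(x) := x\cdot\nabla v(x)$ is harmonic in $B_1$: indeed $\Delta R = 2\,\Delta v = 0$. Moreover $R(0) = 0$ and $R = v_r$ on $\partial B_1$ (since $|x|=1$ there). Second, the elementary identity
\[
v(\omega) - v(0) = \int_0^1 v_r(s\omega)\,ds, \qquad \omega \in \partial B_1,
\]
follows by integrating the radial derivative along the segment from $0$ to $\omega$. Also $\int_{\partial B_1} v_r\,d\sigma = 0$ by the divergence theorem, so $R$ has mean zero on $\partial B_1$.

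For the $L^\infty$ estimate \eqref{Linfty}, my plan is to establish the pointwise bound $|v_r(s\omega)| \le C_n\,\|v_r\|_{L^\infty(\partial B_1)}$ uniformly in $s\in(0,1]$ and $\omega\in\partial B_1$, and then integrate. Since $v_r(s\omega) = R(s\omega)/s$ and the maximum principle gives $|R|\le\|v_r\|_{L^\infty(\partial B_1)}$ in $\overline B_1$, this bound is immediate with constant $2$ whenever $s\ge 1/2$. For $s\le 1/2$ I would exploit $R(0) = 0$: writing $R(x) = \int_{\partial B_1}[P(x,y) - P(0,y)]\,v_r(y)\,d\sigma(y)$ via the Poisson kernel (the subtraction is legitimate because $\int v_r = 0$) and applying a standard interior gradient estimate for the harmonic $R$ yields $|R(x)| \le C_n|x|\,\|v_r\|_{L^\infty(\partial B_1)}$ for $|x|\le 1/2$. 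Integrating the identity above then produces $\|v - v(0)\|_{L^\infty(\partial B_1)} \le C_n\|v_r\|_{L^\infty(\partial B_1)}$.

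For the $L^1$ estimate \eqref{L1}, I would first invoke the minimizing property of the median: for every real constant $s$ one has $\|v - t\|_{L^1(\partial B_1)} \le \|v - s\|_{L^1(\partial B_1)}$, and in particular $\|v-t\|_{L^1(\partial B_1)} \le \|v-v(0)\|_{L^1(\partial B_1)}$. Thus it suffices to bound the right-hand side. For this I would use the explicit Neumann Green's function representation on the unit ball: for $n\ge 3$, and using $\int v_r = 0$ to absorb an additive constant,
\[
v(x) - v(0) = \frac{2}{(n-2)|\partial B_1|}\int_{\partial B_1}\frac{v_r(y)}{|x-y|^{n-2}}\,d\sigma(y),
\]
with an analogous logarithmic kernel for $n=2$. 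Restricting $x$ to $\partial B_1$, Fubini combined with the rotation-invariant identity $\int_{\partial B_1}|x-y|^{-(n-2)}\,d\sigma(x) = |\partial B_1|$ yields $\|v - v(0)\|_{L^1(\partial B_1)} \le C_n \|v_r\|_{L^1(\partial B_1)}$.

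The main obstacle will be matching the explicit constant $2n^{3/2}$ stated in the lemma, since the kernel estimates above naturally give smaller constants for $n\ge 3$. A cleaner route yielding exactly $2n^{3/2}$ goes through an elementary coordinate-wise path argument: for $\omega\in\partial B_1$, integrate $\nabla v$ along the polyline $0 \to \omega_1 e_1 \to \omega_1 e_1 + \omega_2 e_2 \to \cdots \to \omega$ to get $|v(\omega)-v(0)|\le \sum_i |\omega_i|\,\|v_i\|_{L^\infty(\partial B_1)}$ (using the maximum principle on each harmonic $v_i$), apply Cauchy–Schwarz to extract a factor $\sqrt n$, and then prove a reverse Dirichlet-to-Neumann inequality $\|v_i\|_{L^\infty(\partial B_1)} \le 2n\,\|v_r\|_{L^\infty(\partial B_1)}$ for each $i$ using the harmonicity of $R = x\cdot\nabla v$ together with $R(0)=0$. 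This last step is the genuinely delicate ingredient—controlling individual components of $\nabla v$ by the Neumann datum alone is not immediate—and is where I expect most of the real work to sit.
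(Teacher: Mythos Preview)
Your $L^\infty$ argument is essentially the paper's: define $w(x)=x\cdot\nabla v$, use the maximum principle for $|x|\ge 1/2$, and an interior gradient estimate (the paper does it via the mean value property applied to each $w_i$, giving $|w_i|\le 2n\|v_r\|_{L^\infty(\partial B_1)}$ on $B_{1/2}$, hence $|\nabla w|\le 2n^{3/2}\|v_r\|_{L^\infty(\partial B_1)}$) for $|x|\le 1/2$. This is exactly where the constant $2n^{3/2}$ comes from.

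For the $L^1$ estimate your route diverges from the paper, and you miss the clean idea. The paper proves \eqref{L1} by \emph{duality} from \eqref{Linfty}: after subtracting $t$, one approximates $\mathrm{sgn}(v)$ on $\partial B_1$ by smooth functions $g_k$ with $|g_k|\le 1$ and $\int_{\partial B_1} g_k=0$ (the median is used precisely to make such a zero-mean extension of $\mathrm{sgn}(v)$ possible), solves the Neumann problem $\Delta\varphi_k=0$, $\partial_r\varphi_k=g_k$, $\varphi_k(0)=0$, applies the already-proved \eqref{Linfty} to get $\|\varphi_k\|_{L^\infty(\partial B_1)}\le 2n^{3/2}$, and then integrates by parts:
\[
\Big|\int_{\partial B_1} v\,g_k\Big|=\Big|\int_{\partial B_1} v\,\partial_r\varphi_k\Big|=\Big|\int_{\partial B_1} v_r\,\varphi_k\Big|\le 2n^{3/2}\|v_r\|_{L^1(\partial B_1)}.
\]
This transfers the constant automatically and avoids any explicit Neumann kernel. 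Your Green's-function approach is correct in spirit and would yield a dimensional constant, but the specific formula you wrote (with bare $|x-y|^{2-n}$) is not the Neumann function for the ball, and in any case does not naturally produce $2n^{3/2}$.

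Your polyline alternative should be abandoned: bounding $\|v_i\|_{L^\infty(\partial B_1)}$ by $\|v_r\|_{L^\infty(\partial B_1)}$ is essentially equivalent to controlling the tangential gradient of $v$ on $\partial B_1$, which depends on the boundary values of $v$---the very quantity you are trying to estimate. The step you flag as ``genuinely delicate'' is in fact circular.
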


The previous estimates should not be surprising. Indeed, we are controlling the solution to a boundary value problem for an homogeneous equation by its Neumann data ${v_r}_{|_{\partial B_1}}$. Alternatively, the flux ${v_r}_{|_{\partial B_1}}$ can be considered as a first-order elliptic integro-differential operator acting on the function $v_{|_{\partial B_1}}$. This operator is a kind of ``half-Laplacian'', since we extend the data $v_{|_{\partial B_1}}$ harmonically in the ball ---the half-Laplacian corresponds to the harmonic extension in a half-space. Anyhow, these comments will not be needed in the following elementary proof.

\begin{proof}[Proof of Lemma \ref{lem:RadcontrolsHarm}]
We follow \cite{C22quant}. We start proving \eqref{Linfty}. From it, we will easily deduce \eqref{L1} by duality. 

Let
$$
w(x):= x\cdot \nabla v (x) = r v_r \qquad\text{ for } x\in \overline B_ 1.
$$
We claim that
\begin{equation}\label{pointw}
|w(x)|\leq 2n^{3/2} \Vert v_{r}\Vert_{L^{\infty}(\partial B_{1})} \, r \qquad\text{ for } x\in \overline B_ 1.
\end{equation}
From this, \eqref{Linfty} will clearly follow since, for all $\sigma\in \partial B_1$ we have
$$
v(\sigma)-v(0)=\int_0^1 v_r(r\sigma) \,dr= \int_0^1 r^{-1} w(r\sigma)\,dr.
$$ 

Now, to prove \eqref{pointw}, note that $w$ is harmonic in $B_1$ and agrees with $v_r$ on $\partial B_1$. Consequently,
\begin{equation}\label{Linftyw}
\Vert w\Vert_{L^{\infty}(B_{1})} \leq \Vert v_r\Vert_{L^{\infty}(\partial B_{1})}.
\end{equation}
It follows that \eqref{pointw} holds whenever $|x|=r\geq 1/2$. 

Assume now that $x\in B_{1/2}$. Note that
\begin{equation}\label{wdiff}
|w(x)|= |w(x)-w(0)|\leq \Vert \nabla w \Vert_{L^{\infty}(B_{1/2})} \, r.
\end{equation}
Using that the partial derivatives $w_i$ of $w$ are harmonic and also \eqref{Linftyw}, we see that
\begin{align*}
|w_i(x)|&=\frac{1}{|B_{1/2}|}\left| \int_{B_{1/2}(x)} w_i \, dx \right|= 
\frac{1}{|B_{1/2}|}\left| \int_{\partial B_{1/2}(x)} w \nu^i \, d\HH^{n-1} \right| 
\\
&\leq \frac{|\partial B_{1/2}|}{|B_{1/2}|} \Vert v_r\Vert_{L^{\infty}(\partial B_{1})}
= 2n \Vert v_r\Vert_{L^{\infty}(\partial B_{1})}.
\end{align*}
Hence $|\nabla w(x)|\leq 2n^{3/2} \Vert v_r\Vert_{L^{\infty}(\partial B_{1})}$, which combined with \eqref{wdiff}, gives \eqref{pointw} when  $r< 1/2$.

Next, we establish \eqref{L1} by a duality argument. First note that the value $t$ is finite and well-defined. Replacing $v$ by $v-t$ we may assume that $t=0$, and therefore that $0=\inf \left\{ \overline{t} : |\{v>\overline{t}\}\cap\partial B_{1}|\leq |\partial B_{1}|/2\right\}$. It follows that $|\{v>0\}\cap\partial B_{1}|\leq |\partial B_{1}|/2$ and that $|\{v>\overline{t}\}\cap\partial B_{1}|> |\partial B_{1}|/2$ for all $\overline{t}<0$. As a consequence, $|\{v<\overline{t}\}\cap\partial B_{1}|\leq |\partial B_{1}|/2$ for all $\overline{t}<0$, and thus $|\{v<0\}\cap\partial B_{1}|\leq |\partial B_{1}|/2$. 

Let now $\text{sgn} (v):=v/|v|$ where $v\neq 0$. Since $|\{v>0\}\cap\partial B_{1}|\leq |\partial B_{1}|/2$ and $|\{v<0\}\cap\partial B_{1}|\leq |\partial B_{1}|/2$, it is possible to extend $ \text{sgn} (v)$ to $\{v=0\}\cap\partial B_{1}$, still taking values $\pm 1$ and in such a way that we have $\int_{\partial B_1} \text{sgn} (v)\, d\HH^{n-1} =0$.\footnote{This follows from the fact that, given a measurable set $A\subset \partial B_1$ (which we take here to be $\{v=0\}\cap\partial B_{1}$) and $\theta\in (0,1)$, there exists a measurable subset $B\subset A$ with $|B|=\theta |A|$. This follows from the continuity with respect to $\rho$ of the quantity $|A\cap B_\rho(1,0,\ldots,0)|$.} In addition, it will also hold that $|v|=v\, \text{sgn} (v)$ on $\partial B_1$. 

We now define the functions
$$
g_k (x) := \int_{\partial B_1} \text{sgn} (v)(y)  \, \rho_k (|x-y|)\, d\HH^{n-1} (y),
$$
where $\{\rho_k=\rho_k(|\cdot | )\}$ is a sequence of smooth mollifiers on $\partial B_1$. We have that $g_k\in C^\infty (\partial B_1)$, $|g_k|\leq 1$, and $\int_{\partial B_1} g_k\, d\HH^{n-1} =0$ since $\text{sgn} (v)$ has zero average on~$\partial B_1$. In addition, since $|v|=v\, \text{sgn} (v)$ a.e.\ on $\partial B_1$, it holds that
\begin{equation}\label{limgk}
\int_{\partial B_1} |v|\, d\HH^{n-1} = \lim_k \int_{\partial B_1} v g_k\, d\HH^{n-1}.
\end{equation}

Now, since $g_k$ is smooth and has zero average on $\partial B_1$, we can uniquely solve the problem
$$
\left\{
\begin{array}{cl}
\Delta \varphi_k=0 & \text{in }B_1\\
\partial_r \varphi_k=g_k & \text{on }\partial B_1
\end{array}
\right.
$$
by imposing, additionally, $\varphi_k(0)=0$. By \eqref{Linfty}, we have $\Vert \varphi_k\Vert_{L^{\infty}(\partial B_{1})}\leq 2n^{3/2}$. We conclude that
\begin{align*}
\left| \int_{\partial B_{1}} v g_k \, d\HH^{n-1} \right| & =
\left| \int_{\partial B_{1}} v \, \partial_r \varphi_k \, d\HH^{n-1} \right| =\left| \int_{B_{1}} \nabla v \cdot \nabla\varphi_k \, dx \right|
=\left| \int_{\partial B_{1}} v_r \, \varphi_k \, d\HH^{n-1} \right| 
\\
&
\leq 2n^{3/2}  \int_{\partial B_{1}} |v_r|\, d\HH^{n-1}.
\end{align*}
This, together with \eqref{limgk}, concludes the proof of \eqref{L1}.
\end{proof}

We can now give the

\begin{proof}[Proof of Proposition \ref{prop:3}]
We follow \cite{C22quant}. Since 
$$
\Vert u_{r}\Vert_{L^{1}({B_{1}\setminus B_{1/2})}}= \ave_{1/2}^1 \left( \int_{\partial B_r} \frac{1}{2} |u_{r}|\, d\HH^{n-1} \right) dr,
$$ 
there exists $\rho\in [1/2,1]$ such that $ \Vert u_{r}\Vert_{L^{1}({B_{1}\setminus B_{1/2})}}= \frac{1}{2}\int_{\partial B_\rho}  |u_{r}|\, d\HH^{n-1}$.

Let $v$ be the harmonic function in $B_\rho$ which agrees with $u$ on $\partial B_\rho$. We have $v\in C^\infty (\overline B_\rho)$. Decompose the radial derivatives $u_r=u_r^+-u_r^-$ and $v_r=v_r^+-v_r^-$ in their positive and negative parts. Since $v\leq u$ in $B_\rho$ and the functions agree on the boundary, we have $v_r\geq u_r$ on $\partial B_\rho$. As a consequence, $v_r^-\leq u_r^-$ on $\partial B_\rho$. Since, in addition 
$
0=\int_{B_\rho}  \Delta v\, dx = \int_{\partial B_\rho}  v_{r}\, d\HH^{n-1} = \int_{\partial B_\rho}  v_{r}^+\, d\HH^{n-1}-\int_{\partial B_\rho}  v_{r}^-\, d\HH^{n-1},
$
it follows that
\begin{align*}
2\Vert u_{r}\Vert_{L^{1}({B_{1}\setminus B_{1/2})}} &= \int_{\partial B_\rho}  |u_{r}|\, d\HH^{n-1}
\geq \int_{\partial B_\rho}  u_{r}^-\, d\HH^{n-1} \\
&\geq \int_{\partial B_\rho}  v_{r}^-\, d\HH^{n-1}=\int_{\partial B_\rho}  v_{r}^+\, d\HH^{n-1}= \frac{1}{2}\int_{\partial B_\rho}  |v_{r}|\, d\HH^{n-1}.
\end{align*}

Now, since $\rho\in [1/2,1]$, we can rescale the estimate \eqref{L1} in Lemma~\ref{lem:RadcontrolsHarm} and apply it to the harmonic function $v$ to deduce
$$
C\int_{\partial B_\rho}  |v_{r}|\, d\HH^{n-1} \geq \int_{\partial B_\rho}  |v-t|\, d\HH^{n-1}
=  \int_{\partial B_\rho}  |u-t|\, d\HH^{n-1}
$$
for some value $t$ and a dimensional constant $C$, where in the last equality we used that $v=u$ on $\partial B_\rho$. Therefore, by the two previous estimates, the proposition will be proven once we establish that, for any $t\in\R$,
\begin{equation}\label{conclsuper}
\Vert u-t\Vert_{L^{1}({B_{1}\setminus B_{1/2})}} \leq C\left( 
\int_{\partial B_\rho}  |u-t|\, d\HH^{n-1}+
\Vert u_r\Vert_{L^{1}({B_{1}\setminus B_{1/2})}}
\right)
\end{equation}
and
\begin{equation}\label{conclsuper-ball}
\Vert u-t\Vert_{L^{1}({B_{1/2})}} \leq C\left( 
\int_{\partial B_\rho}  |u-t|\, d\HH^{n-1}+
\Vert u_r\Vert_{L^{1}({B_{1})}}
\right).
\end{equation}

This is simple. For both estimates, we will use that
$$
(u-t)(s\sigma)=(u-t)(\rho\sigma)-\int_s^\rho u_r (r\sigma) \,dr
$$
for every $s\in (0,1)$ and $\sigma\in S^{n-1}$. 

Now, to check \eqref{conclsuper} we take $s\in (1/2,1)$ and note that
$$
s^{n-1}|(u-t)(s\sigma)|\leq 2^{n-1}\rho^{n-1} |(u-t)(\rho\sigma)|+2^{n-1}\int_{1/2}^1 r^{n-1} |u_r (r\sigma)| \,dr.
$$
Integrating in $\sigma\in S^{n-1}$, and then in $s\in (1/2,1)$, we conclude  \eqref{conclsuper}.

Finally, to prove \eqref{conclsuper-ball} we take $s\in (0,1/2)$ and note that (since $\rho\geq 1/2>s$)
\begin{align*}
s^{n-1}|(u-t)(s\sigma)| &\leq \rho^{n-1} |(u-t)(\rho\sigma)|+\int_{s}^\rho r^{n-1} |u_r (r\sigma)| \,dr \\
& \leq \rho^{n-1} |(u-t)(\rho\sigma)|+\int_{0}^1 r^{n-1} |u_r (r\sigma)| \,dr.
\end{align*}
Integrating in $\sigma\in S^{n-1}$, and then in $s\in (0,1/2)$, we conclude  \eqref{conclsuper-ball} by using also~\eqref{conclsuper}.
\end{proof}

\section{$C^{\alpha}$ estimate}
\label{sect:Holder}

Here we prove the interior H\"older estimate for $n\leq 9$. It yields, in particular, an interior $L^\infty$ bound for stable solutions.

\begin{proof}[Proof of the H\"older estimate \eqref{eq:Ca L1 int} in Theorem \ref{thm:0}]
We may assume $3 \leq n \leq 9$ ---~note that $n\geq 3$ is needed to use Proposition~\ref{prop:1}. Indeed, in case $n\leq 2$ one can easily reduce the problem to the case $n=3$ by adding artificial variables.\footnote{While writing this work, we have found a direct, alternative, proof for $n=2$ which does not require going to $\R^3$. It is as follows. 
One uses the higher integrability estimate \eqref{eq:W12 L1 int} of Theorem~\ref{thm:0}, which is proven in Section~6 for $n\geq 2$. From this, one concludes by using Morrey's embedding (see Theorem~\ref{thmC2} or \cite[Theorem 7.19]{GT}). Alternatively, instead of our results of Section~6, one can use Gehring's lemma (as described in the following Footnote~\ref{foot-Gehring}) after 
using Propositions~\ref{prop:2}  and \ref{prop:3} to get that the $L^2$ norm of the gradient in $B_{1/2}$ is controlled by the $L^1$ norm of the gradient in the larger ball $B_1$.} This simply means looking at the function of two variables (respectively, of one variable) as a function of three variables which does not depend on the third Euclidean variable (restpectively, on the last two variables). Note that both the equation and the stability condition are preserved under this procedure. 

Using Proposition~\ref{prop:1} with $\rho=5/8$, we have
\begin{equation}\label{eq:11proof}
\int_{B_{1/2}}r^{2-n} u_{r}^2\,dx \leq \int_{B_{5/8}}r^{2-n} u_{r}^2\,dx 
\leq C\int_{B_{15/16}\setminus B_{5/8}}|\nabla u|^2\,dx.
\end{equation}

Now, since $1/2<5/8<15/16<1$, we can cover the annulus $B_{15/16}\setminus B_{5/8}$ by a finite number of balls $B_j$, all of them with the same radius, that we choose small enough for the balls $2B_j$ (with same center as $B_j$ and twice its radius) to satisfy $2B_j\subset B_{1}\setminus B_{1/2}$. Now, let $t\in\R$.  In each ball $2B_j$ we can apply Proposition~\ref{prop:2}, rescaled, with $u$ replaced by $u-t$ ---since $u-t$ is a stable solution of $-\Delta v= f(v+t)$. We deduce that
$$
\int_{B_j}|\nabla u|^2\,dx\leq C\Vert u-t\Vert_{L^1(2B_j)}^2 \leq C\Vert u-t\Vert_{L^1(B_{1}\setminus B_{1/2})}^2 .
$$
Adding up in $j$, this yields
\begin{equation}\label{eq:12proof}
\int_{B_{15/16}\setminus B_{5/8}}|\nabla u|^2\,dx\leq  C\Vert u-t\Vert_{L^1(B_{1}\setminus B_{1/2})}^2 \qquad\text{ for all } t\in\R,
\end{equation}
where $C$ always denotes dimensional constants.

We now use estimate \eqref{new-int-ur} from Proposition \ref{prop:3} to deduce
\begin{equation*}
\inf_{t\in\R} \Vert u-t\Vert_{L^1(B_{1}\setminus B_{1/2})}^2\le C\Vert u_r\Vert_{L^1(B_{1}\setminus B_{1/2})}^2 \le 
 C\Vert u_r\Vert_{L^2(B_{1}\setminus B_{1/2})}^2\le
C\int_{B_{1}\setminus B_{1/2}} r^{2-n} u_r^2\,dx.
\end{equation*}
This, together with \eqref{eq:11proof} and \eqref{eq:12proof}, leads to
$$
\int_{B_{1/2}}r^{2-n} u_{r}^2\,dx \leq C\int_{B_{1}\setminus B_{1/2}} r^{2-n} u_r^2\,dx.
$$
We can write this inequality, in an equivalent way, as
$$
\int_{B_{1/2}}r^{2-n} u_{r}^2\,dx \leq  \theta \int_{B_{1}} r^{2-n} u_r^2\,dx
$$ 
for the dimensional constant $\theta= \frac{C}{1+C}\in (0,1)$.

For every $\rho \in (0,1)$, this estimate applied to the stable solution $u(\rho\, \cdot)$ yields
\begin{equation}\label{recurrence}
\int_{B_{\rho/2}}r^{2-n} u_{r}^2\,dx \leq \theta\int_{B_{\rho}} r^{2-n} u_r^2\,dx
\end{equation}
(note that the integrals are rescale invariant).
This inequality can be iterated, in the balls of radius $2^{-j}$ centered at $0$, for $j\ge 1$ an integer, to obtain
\begin{equation*}
\int_{B_{2^{-j}}}r^{2-n} u_{r}^2\,dx \leq \theta^{j-1} \int_{B_{1/2}}r^{2-n} u_{r}^2\,dx.
\end{equation*}
Since $\theta\in (0,1)$, it follows that, for some dimensional $\alpha\in (0,1)$,
\begin{equation}\label{finaltoholder}
\int_{B_\rho} r^{2-n}u_r^2\,dx\leq C\rho^{2\alpha}\int_{B_{1/3}} r^{2-n} u_r^2\,dx \le C\rho^{2\alpha}\Vert\nabla u\Vert_{L^2(B_{1/2})}^2 \quad\text{ for all } \rho \leq 1/3,
\end{equation}
where we have used Proposition~\ref{prop:1} with $\rho=1/3$ in the last inequality. In particular, since $\rho^{2-n}\leq r^{2-n}$ in $B_\rho$, we conclude that
\begin{equation*}
\int_{B_\rho} u_r^2\,dx  \le C\rho^{n-2+2\alpha}\Vert\nabla u\Vert_{L^2(B_{1/2})}^2
\le C\rho^{n-2+2\alpha}\Vert u\Vert_{L^1(B_{1})}^2  \quad\text{ for all } \rho \leq 1/3,
\end{equation*}
where we have used Proposition \ref{prop:2} in the last inequality.

Next, given $y\in \bar B_{1/12}$, we can apply the last estimate to the function $u_y(x):=u(y+\frac{x}{2})$, defined for $x\in B_{1}$, since $y+\frac{1}{2}B_{1}\subset B_{1}$. Denoting by
$$
u_{r_y}(x):= \frac{x-y}{|x-y|}\cdot \nabla u (x)
$$
the radial derivative based at the point $y$, we get
\begin{equation}\label{radial2}
\int_{B_\rho(y)} u_{r_y}^2\,dx\leq C\rho^{n-2+2\alpha}\Vert u\Vert_{L^1(B_1)}^2\quad \mbox{ for all }\,y \in \bar B_{1/12} \text{ and } \rho \leq 1/6.
\end{equation}
From \eqref{radial2}, we immediately get
\begin{equation}\label{radial1}
\int_{B_\rho(y)}|u_{r_y}|\,dx\leq C\rho^{n-1+\alpha}\Vert u\Vert_{L^1(B_1)}\quad \mbox{ for all }\,y \in \bar B_{1/12}\text{ and } \rho \leq 1/6.
\end{equation}

At this point we use Morrey's estimate, as stated and proven in Appendix \ref{app:morrey}. By Theorem~\ref{thmC2} applied to $u(\frac{\cdot}{3})$ with $\bar C= C\Vert u\Vert_{L^1(B_1)}$, \eqref{radial1} yields\footnote{In \cite{CFRS} we employed an alternative way to conclude the H\"older estimate. It consists of averaging \eqref{radial2}  with respect to $y$ to get the analogue estimate with the radial derivative in the integrand replaced by the full gradient. From this, by classical estimates on Morrey spaces (see for instance \cite[Theorem 7.19]{GT}) one deduces the desired bound. Instead, in this paper we see that there is no need to average in $y$ since, as we show in Appendix \ref{app:morrey}, Morrey's proof involves, in fact,  radial derivatives ---and not the full gradient.}
$$
\|u\|_{C^\alpha(\overline B_{1/12})}\leq C\|u\|_{L^1(B_{1})}.
$$

Finally, from this estimate, \eqref{eq:Ca L1 int} follows by a standard covering and scaling argument.
\end{proof}

\section{$W^{1,2+\gamma}$ estimate}
\label{sect:higher}

Here we establish a higher $L^{2+\gamma}$ integrability result for the gradient of the solution, as stated in Theorem~\ref{thm:0}. In \cite{CFRS} this result was used crucially to get compacity in $W^{1,2}$, locally in the  interior, for any sequence of stable solutions that are uniformly bounded in $L^1$ ---here the solutions may correspond to different nonnegative nonlinearities. This was an essential tool in the non-quantitative proof of H\"older continuity in \cite{CFRS} to control the full gradient by the radial derivative. Instead, in the current paper we have not used the higher integrability result to establish interior H\"older continuity. We present it here, however, because of the interest it has by itself and also since an analogue higher integrability result will be needed in our boundary regularity proof; see Lemma~\ref{lem:bdry-gamma-2} and the comments before it.

In \cite{CFRS} (and here), the proof of the $W^{1,2+\gamma}$ estimate\footnote{\label{foot-Gehring}Yi Zhang, in an interesting personal communication, has pointed out that a weaker version of this higher integrability result can also be directly deduced from  our $W^{1,2}$ estimate and Gehring's lemma. More precisely, one uses the version of Gehring's lemma in Proposition 1.1 of  \cite[Chapter~5]{Giaq}. One must also use standard covering and scaling arguments to pass from balls to cubes, and viceversa. One starts from \eqref{zhang}, then use the cited version of Gehring's lemma (here, in the notation of~\cite{Giaq},  $p=2+\gamma>2=q$), later employ Proposition~\ref{Nash} to control the $L^p$ norm of~$u$, and finally Simon's Lemma \ref{lem_abstract}. However, notice that while the exponent of higher integrability depends only on~$n$ in \cite{CFRS},  after the use of Gehring's lemma the exponent also depends on the constants in the control of the $L^2$ norm by the $L^1$ norm. This gives a particularly weaker result in the case of equations with variable coefficients, as treated in \cite{Ern1}. Indeed, proceeding in this way, the constants in the control of the $L^2$ norm by the $L^1$ norm (and hence the higher integrability exponent $2+\gamma$) will depend also on the ellipticity constants of the coefficients ---instead of only on~$n$ as in \cite{Ern1, Ern2, Ern3}.}
is based in the following bound on every level set of the solution.\footnote{Since level sets appear here, it is worth noticing (even though we will not use this in the paper) that the quantity $\mathcal A$ in \eqref{defAAA} controls the second fundamental form of every level set of $u$. This was crucially used in \cite{C10}, in combination with the Sobolev-type inequality of  Michael-Simon and Allard (applied on every level set of $u$), to prove regularity of stable solutions up to dimension $n \leq 4$ for all nonlinearities $f$ (including sign-changing nonlinearities).}

\begin{lemma}{\rm (\cite[(2.13)]{CFRS})}\label{levelsets}
Let $u\in C^\infty (\overline B_1)$ be a stable solution of $-\Delta u=f(u)$ in $B_1\subset \R^n$,  for some nonnegative function $f\in C^{1}(\R)$. 

Then, for almost every $t\in\R$, we have
\begin{equation}\label{ahgiohwiob1}
\int_{\{u=t\}\cap B_{1/2}} |\nabla u|^2 \, d\HH^{n-1}  \le  C\Vert \nabla u\Vert^{2}_{L^{2}(B_{1})}
\end{equation}
for some dimensional constant $C$.
\end{lemma}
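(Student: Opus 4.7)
The plan is to convert the surface integral on $\{u=t\}$ into a bulk integral by applying the divergence theorem to the vector field $V:=\eta\,|\nabla u|\,\nabla u$ on the super-level set $\Omega_t:=\{u>t\}\cap B_{3/4}$, and then to bound the resulting volume integrals using identity \eqref{hwuighwiu} together with Lemmas~\ref{conseqestab} and~\ref{lem:hessian}. Here $\eta\in C^\infty_c(B_{3/4})$ is a cutoff with $\eta\equiv 1$ on $B_{1/2}$ and $|\nabla\eta|\le C$. By Sard's theorem, it suffices to prove \eqref{ahgiohwiob1} for regular values $t$, so that $\{u=t\}\cap B_{3/4}$ is a smooth $(n-1)$-submanifold on which $\nabla u\neq 0$.

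For such $t$, the outer unit normal to $\Omega_t$ along the smooth level set is $-\nabla u/|\nabla u|$, and $\eta$ vanishes on $\partial B_{3/4}$, so the divergence theorem gives
\begin{equation*}
\int_{\{u=t\}\cap B_{3/4}} \eta\,|\nabla u|^2\,d\HH^{n-1} \;=\; -\int_{\Omega_t}\mathrm{div}(V)\,dx.
\end{equation*}
Expanding $\mathrm{div}(V)$ by the product rule, invoking identity \eqref{hwuighwiu} and $\Delta u=-f(u)$, the right-hand side decomposes as the sum of three bulk integrals over $\Omega_t$: one proportional to $|\nabla u|^2|\nabla\eta|$, one to $\eta\,|\nabla u|\,\mathrm{tr}(M)$ with $M:=D^2u-(D^2u[\nu,\nu])\,\nu\otimes\nu$, and one to $\eta\,|\nabla u|\,f(u)$.

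The first two terms are routine: the first is dominated by $\int_{B_{3/4}}|\nabla u|^2|\nabla\eta|\,dx\le C\|\nabla u\|_{L^2(B_1)}^2$, while the second is controlled via \eqref{hwuighwiu2} (which gives $|\mathrm{tr}(M)|\le C\J$), the Cauchy--Schwarz inequality, and Lemma~\ref{conseqestab} applied with a cutoff equal to $1$ on $B_{3/4}$ and vanishing on $\partial B_1$, yielding $\|\J\|_{L^2(B_{3/4})}\le C\|\nabla u\|_{L^2(B_1)}$. The main obstacle is the third term, which is nonnegative and therefore cannot be simply discarded. My plan is to enlarge the integration domain from $\Omega_t$ to $B_{3/4}$ (using $\eta|\nabla u|f(u)\ge 0$), to rewrite $f(u)=-\Delta u$, and to integrate by parts on $B_{3/4}$, where there is no boundary contribution because $\eta$ has compact support in $B_{3/4}$:
\begin{equation*}
\int_{B_{3/4}}\eta\,|\nabla u|\,f(u)\,dx \;=\; \int_{B_{3/4}}\nabla(\eta\,|\nabla u|)\cdot\nabla u\,dx \;\le\; \int_{B_{3/4}}|\nabla u|^2|\nabla\eta|\,dx \;+\; \int_{B_{3/4}}\eta\,|D^2u|\,|\nabla u|\,dx,
\end{equation*}
where the last inequality uses $|\nabla|\nabla u||\le|D^2u|$ a.e.\ (valid by Stampacchia's theorem, as in the proof of Lemma~\ref{lem:hessian}). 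The weighted Hessian estimate \eqref{ahgiohwiob1bis} then bounds this by $C\|\nabla u\|_{L^2(B_1)}^2$. Summing the three contributions and using $\eta\equiv 1$ on $B_{1/2}$ yields \eqref{ahgiohwiob1}.
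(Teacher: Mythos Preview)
Your proof is correct and follows essentially the same approach as the paper: apply the divergence theorem to $|\nabla u|\nabla u$ (times a cutoff) on the super-level set $\{u>t\}$, and control the resulting bulk term via the decomposition \eqref{hwuighwiu} together with the bound $|\mathrm{tr}(M)|\le C\J$ from \eqref{hwuighwiu2} and Lemma~\ref{conseqestab}. The only cosmetic difference is in how the $|\nabla u|\,|\Delta u|$ contribution is handled: the paper quotes the intermediate estimate \eqref{pass} from the proof of Lemma~\ref{lem:hessian} directly, whereas you enlarge to $B_{3/4}$, integrate $-\eta|\nabla u|\Delta u$ by parts yourself, and then invoke the weighted Hessian bound \eqref{ahgiohwiob1bis}; both routes give the same control.
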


\begin{proof}
Let  $\eta \in C^\infty_c(B_{3/4})$ with $\eta\equiv1$ in $B_{1/2}$.
Combining \eqref{hwuighwiu} with \eqref{hwuighwiu2} and \eqref{pass0}-\eqref{pass1},
and using again Lemma \ref{conseqestab}, we get
\begin{equation}\label{absdiv}
\begin{split}
\int_{B_{1}} \big| {\rm div}(|\nabla u|\, \nabla u) \big| \eta^2 \,dx &\le  C \int_{B_{1}} |\nabla u| \J\, \eta^2\,dx + \int_{B_{1}} 2|\nabla u|\,|\Delta u|\, \eta^2\,dx \\
&\hspace{-3cm} \leq C\left(\int_{B_1} |\nabla u|^2\eta^2\,dx\right)^{1/2} \left(\int_{B_1} \J^2 \eta^2\,dx\right)^{1/2} + C \|\nabla u\|_{L^{2}(B_{1})}^2\\
&\hspace{-3cm}\le C \|\nabla u\|_{L^{2}(B_{1})}^2.
\end{split}
\end{equation}

Now, by Sard's theorem, $\{u=t\}$ is a smooth hypersurface for almost every $t\in\R$. For such values of $t$, we can apply the divergence theorem and get
\begin{align*}\label{claim-12}
 \int_{\{u=t\}\cap B_{1/2}} |\nabla u|^2 \, d\HH^{n-1} & \leq \int_{\{u=t\}\cap B_{1}} |\nabla u|^2\eta^2 \,d\HH^{n-1}
 \\
 & =- \int_{\{u>t\} \cap B_1 }  {\rm div}\big(|\nabla u|\, \nabla u \,\eta^2\big) \,dx.
\end{align*}
This bound, combined with \eqref{absdiv}, gives \eqref{ahgiohwiob1}.
\end{proof}

\begin{proof}[Proof of the $W^{1,2+\gamma}$ estimate \eqref{eq:W12 L1 int} in Theorem \ref{thm:0}]
Since $u/ \|\nabla u\|_{L^{2}(B_{1})}$ is a stable solution of a new equation, we may assume $\|\nabla u\|_{L^{2}(B_{1})}=1$. This normalization will simplify the following proof.

First note that, with $\bar u := \ave_{B_1} u$, we have
\begin{equation*}
\left(\int_{B_{1}} |u-\bar u|^p \, dx\right)^{\frac 1 p } \le C\left( \int_{B_1} |\nabla u|^2\,dx\right)^{\frac 1 2 }=C
\end{equation*}
for some dimensional exponent $p>2$ and constant $C$ ---this is the Sobolev inequality with average for $W^{1,2}(B_1)$ functions.\footnote{For $n\geq 3$ this follows combining the Sobolev embedding (\cite[Theorem 7.26]{GT}) (see also \cite[Theorem 7.10]{GT}) applied to $u-\bar u$ with Poincar\'e's inequality with average and exponent 2 (\cite[(7.45)]{GT}). For $n=2$ the claim follows from \cite[Theorem 7.21]{GT}.} 
Using the previous bound and the coarea formula, we obtain 
\begin{equation}\label{ahgiohwiob2}
 \int_{\R} dt \int_{\{u=t\}\cap B_{1}\cap \{|\nabla u|\neq0\}}  d\HH^{n-1} |t-\bar u|^p \,|\nabla u|^{-1}  =   \int_{B_1} |u-\bar u|^p\chi_{\{|\nabla u| \neq 0\}}  \, dx  \le C.
\end{equation}
Also, since $p>2$, we may choose dimensional constants $q>1$ and $\theta\in (0,1/3)$ such that $p/q = (1-\theta)/\theta$.  Thus, 
defining 
\[
h(t) : =  \max\big\{1, |t-\bar u|\big\}
\]
and using the coarea formula and H\"older's inequality (note that $p\theta -q(1-\theta)=0$),  we obtain
\[
\begin{split}
\int_{B_{1/2}} |\nabla u|^{3-3\theta} \,dx&=\int_{\R} dt\int_{\{u=t\}\cap B_{1/2}\cap \{|\nabla u|\neq0\}} d\HH^{n-1} h(t)^{p\theta -q(1-\theta)}  |\nabla u|^{-\theta + 2(1-\theta)} 
\\
&\le   \left(\int_{\R} dt\int_{\{u=t\}\cap B_1\cap \{|\nabla u|\neq0\}} d\HH^{n-1} h(t)^{p}  |\nabla u|^{-1} \right)^\theta  \cdot \\
& \hspace{3cm} \cdot \bigg(\int_{\R} dt\int_{\{u=t\}\cap B_{1/2}} \hspace{-3mm}  h(t)^{-q} |\nabla u|^2  \bigg)^{1-\theta}.
\end{split}
\]

Observe now that, thanks to the definition of $h(t)$ and  \eqref{ahgiohwiob2}, we have
\begin{align*}
\int_{\R} dt & \int_{\{u=t\}\cap B_1\cap \{|\nabla u|\neq0\}} d\HH^{n-1}   h(t)^{p}  |\nabla u|^{-1} \\
& \leq \int_{\bar u-1}^{\bar u+1} dt\int_{\{u=t\}\cap B_1\cap \{|\nabla u|\neq0\}}  d\HH^{n-1}    |\nabla u|^{-1} +C\\
&\leq |B_1|+C\leq  C.
\end{align*}
Also, since $q>1$ it follows that  $\int_{\R} h(t)^{-q}dt$ is finite, and thus  \eqref{ahgiohwiob1} leads to
\[
\int_{\R} dt \, h(t)^{-q} \int_{\{u=t\}\cap B_{1/2}} d\HH^{n-1} |\nabla u|^2\le C\int_{\R} h(t)^{-q}\,dt \le C.
\] 
Therefore, we have proven that
$\int_{B_{1/2}} |\nabla u|^{3-3\theta}  \,dx \le C$
for some dimensional constants  $\theta\in(0,1/3)$  and $C$, as desired.
\end{proof}

\section{The case $f \geq -K$, with $K$ a nonnegative constant\vspace{.15cm}}
\label{sect:fbddbelow}

In this section we point out the necessary changes to treat the case $f\geq -K$, with $K$ being a nonnegative constant, as claimed in Remark~\ref{fbddbelow}.  We need to revise the proofs of H\"older regularity in Section~\ref{sect:Holder} and of the $W^{1,2+\gamma}$ estimate in Section~\ref{sect:higher}. 

First, Lemmas~\ref{conseqestab2} and \ref{conseqestab} did not require $f\geq 0$. Now,
the estimates of Lemma~\ref{lem:hessian} and Proposition~\ref{prop:2} still hold after adding the constant  $K$, in their right-hand sides, to the $L^2$ norm of $\nabla u$ and to the $L^1$ norm of $u$. These changes come from the arguments in \eqref{LapL1} and \eqref{pass0} ---the only places where we used $-\Delta u= f(u) \geq 0$. To see this, we consider the superharmonic function 
$$
w:= u-K |x|^2/(2n)
$$ 
and, in \eqref{LapL1}, we proceed as 
\begin{eqnarray*}
\int_{B_{3/4}}|\Delta u|\,dx & \leq & CK+ \int_{B_{3/4}}|\Delta w|\,dx \leq CK+\int_{B_1} (-\Delta w)\,\zeta\,dx \\
&=& C K+\int_{B_1} (-\Delta u+K) \,\zeta\,dx \leq CK+\int_{B_1} \nabla u \cdot \nabla \zeta\,dx
\\&\leq & C( \|\nabla u\|_{L^2(B_1)}+K).
\end{eqnarray*}
In \eqref{pass0}, we proceed as 
\begin{eqnarray}\label{estuu}
 \nonumber \int_{B_{3/4}}  2|\nabla u|\,|\Delta u|\, dx &\le &  
  \nonumber CK \|\nabla u\|_{L^{1}(B_{1})} + \int_{B_{3/4}} 2|\nabla u|\,|\Delta w| \,dx 
 \\  &&\hspace{-3cm}\leq
  \nonumber  CK  \|\nabla u\|_{L^{2}(B_{1})} - \int_{B_{1}} 2|\nabla u|\, \Delta w \, \eta^2\,dx \\
 &&\hspace{-3cm}
 \leq C(\|\nabla u\|_{L^{2}(B_{1})} +K)^2 - \int_{B_{1}} 2|\nabla u|\, \Delta u \, \eta^2\,dx 
 \end{eqnarray}
and finish the estimate as in \eqref{pass0}-\eqref{pass1}.

Second, we apply Proposition~\ref{prop:3} to the superharmonic function $w$. We deduce that the estimates of the proposition also hold for $u$ after adding $K$, in its right-hand sides, to the $L^1$ norm of $u_r$. 

Now, one proceeds to the proof of the H\"older estimate of Theorem~\ref{thm:0}, as given in Section~\ref{sect:Holder}. Here one must replace inequality~\eqref{recurrence}, to be iterated, by
$$
\int_{B_{\rho/2}}r^{2-n} u_{r}^2\,dx \leq \theta \int_{B_{\rho}} r^{2-n} u_r^2\,dx + C K^2 \rho^4.
$$
Now, using Lemma~8.23 of \cite{GT} (taking $\mu=1/2$, for instance), we see that \eqref{finaltoholder} still holds when adding $K$ to the $L^2$ norm of $\nabla u$. 

Finally, for the $W^{1,2+\gamma}$ estimate in Section~\ref{sect:higher}, thanks to \eqref{estuu} we see that 
\eqref{ahgiohwiob1} also holds after adding $K$ to the $L^2$ norm of $\nabla u$ in its right-hand side. Now, in the beginning of the proof of the $W^{1,2+\gamma}$ estimate, given in Section~\ref{sect:higher}, we simply consider $u/ (\|\nabla u\|_{L^{2}(B_{1})}+K)$ instead of $u/ \|\nabla u\|_{L^{2}(B_{1})}$, and the remaining of the proof is the same.

\bigskip\medskip
\centerline{\large{\sc Part II: Boundary regularity}}
\addtocontents{toc}{\textsc{\hspace{.2cm} Part II:  Boundary regularity \vspace{.1cm}}}
\medskip

In this second part of the article, we establish the boundary regularity results. Sections \ref{sect:boundary-L1radial} and \ref{sect:boundary-conclusion} are the main novelties. Proposition~\ref{prop:3bdry}, proved in Section~\ref{sect:boundary-L1radial}, significantly simplifies the proofs in \cite{CFRS}, which used delicate compactness, blow-up, and Liouville-type arguments.

With $\R^n_+=\{x\in\R^n\, :\, x_n>0\}$ and $r=|x|$, we use the notation
$$
B_\rho^+=\R^n_+\cap B_\rho, \quad A_{\rho, \overline\rho} := \{\rho< r< \overline\rho\},\quad A^+_{\rho, \overline\rho} := \{x_n>0, \rho< r< \overline\rho\},
$$
and
$$
\partial^0  \Omega= \{x_n=0\}\cap \partial \Omega \quad\text{and}\quad \partial^+  \Omega=\R^n_+\cap \partial \Omega
$$
for an open set $\Omega\subset \R^n_+$.

In the stability inequality we will consider (as in the interior case) test functions of the form $\xi=\mathbf{c}\eta$,
with $\mathbf{c}\in W^{2,\infty}(B_1^+)$, $\eta$ a Lipschitz function in $\overline{B_1^+}$, and $\mathbf{c}\eta$ vanishing on the full boundary~$\partial  B_1^+$. In particular, when $\mathbf{c}$ vanishes on $\partial^0B_1^+$, we do not need to require $\eta$ to vanish on this set.  Now, the integration by parts argument in~\eqref{eq:parts}-\eqref{eq:07} leads to
\begin{equation}\label{stabbdry}
\int_{B_1^+} \bigl( \Delta \mathbf{c}+f'(u)\mathbf{c}\bigr) \mathbf{c}\,\eta^{2}\, dx \leq 
\int_{B_1^+} \mathbf{c}^{2}\left|\nabla \eta\right|^{2} \, dx.
\end{equation}

\section{The boundary weighted $L^{2}$ estimate for radial derivatives}
\label{sect:bdry-weighted}

The following is the boundary analogue of the key lemma that yield an interior weighted $L^2$ bound for the radial derivative. The proof will be essentially the same as in the interior case. Note that, in the following statement, the test function $\eta$ does not necessarily vanish on $\partial^0 B_1^+$.
\begin{lemma}{\rm (\cite[Lemma 6.2]{CFRS})} \label{lem:x Du def}
Let $u\in C^\infty(\overline{B_1^+})$ be a stable solution of $-\Delta u=f(u)$ in~$B_1^+\subset\R^n$, with $u=0$ on $\partial^0  B_1^+$, for some nonlinearity $f\in C^1(\R)$. 

Then,
\begin{equation}\label{basic}
\int_{B_1^+} \Big( |\nabla u|^2\big\{(n-2)\eta + 2 x\cdot\nabla \eta \}\,\eta - 2(x\cdot \nabla u) \nabla u\cdot \nabla(\eta^2) - |x\cdot \nabla u|^2 |\nabla \eta|^2 \Big)\,dx \le 0
\end{equation}
for all Lipschitz functions $\eta$ vanishing on $\partial B_1$. As a consequence, given $\lambda >1$ and $\rho \in (0,1/\lambda)$, we have
\begin{equation}\label{bdry-notweighted}
\int_{B_\rho^+} |\nabla u|^2 \,dx \le C_\lambda \int_{A^+_{\rho,\lambda\rho}} |\nabla u|^2\,dx \qquad \text{if } n\ge 3,
\end{equation}
and
\begin{equation*}
\int_{B_\rho^+} r^{2-n}  u_r^2 \,dx \le C_\lambda \, \rho^{2-n}\int_{A^+_{\rho,\lambda\rho}} |\nabla u|^2\,dx  \qquad \text{if } 3\le n\le 9,
\end{equation*}
where $C_\lambda$ is a constant depending only on $n$ and $\lambda$. 
\end{lemma}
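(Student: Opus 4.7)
The plan is to mirror the interior derivation of Lemma \ref{conseqestab2}, while carefully accounting for possible boundary contributions along the flat face $\partial^0 B_1^+$. The choice $\mathbf{c}(x):=x\cdot\nabla u(x)$ is specifically tailored to respect the Dirichlet condition: since $u\equiv 0$ on $\{x_n=0\}$, the tangential gradient of $u$ vanishes there, so $\nabla u = u_{x_n} e_n$ on $\partial^0 B_1^+$, and hence $\mathbf{c} = x_n u_{x_n} = 0$ on that face. Consequently, for any Lipschitz $\eta$ vanishing on $\partial B_1$, the product $\mathbf{c}\eta$ vanishes on the full boundary $\partial B_1^+ = \partial^+ B_1^+\cup \partial^0 B_1^+$, so it is an admissible test function for stability.

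With this test function, the derivation of \eqref{eq:07} from \eqref{stabilityLip} goes through as in the interior case: when integrating $\int \nabla\mathbf{c}\cdot\nabla(\mathbf{c}\eta^2)$ by parts, the flux on $\partial^+ B_1^+$ vanishes because $\eta=0$, and the flux on $\partial^0 B_1^+$ vanishes because $\mathbf{c}=0$ there. I then substitute the pointwise identity $\Delta \mathbf{c} = -f'(u)\mathbf{c} + 2\Delta u$ from \eqref{basic lin} (which holds verbatim on $B_1^+$) and apply the Pohozaev-type identity $2(x\cdot\nabla u)\Delta u = \operatorname{div}\bigl(2(x\cdot\nabla u)\nabla u - |\nabla u|^2 x\bigr) + (n-2)|\nabla u|^2$. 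Integrating against $\eta^2$ by parts, the boundary contribution on the flat face is
\[
-\int_{\partial^0 B_1^+} \bigl[2(x\cdot\nabla u)\, u_{x_n} - |\nabla u|^2 x_n\bigr]\eta^2\, d\HH^{n-1},
\]
which vanishes identically since on $\partial^0 B_1^+$ both $x_n=0$ and $x\cdot\nabla u=0$. No contribution arises on $\partial^+ B_1^+$ because $\eta=0$ there. This yields \eqref{basic}.

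The two consequences then follow by well-chosen test functions in \eqref{basic}. For \eqref{bdry-notweighted}, I pick a Lipschitz cut-off $\eta$ with $\eta\equiv 1$ on $B_\rho$, $\operatorname{supp}\eta \subset B_{\lambda\rho}$, and $|\nabla\eta|\leq C/((\lambda-1)\rho)$: on $B_\rho^+$ the integrand reduces to $(n-2)|\nabla u|^2$, and on the annulus $A^+_{\rho,\lambda\rho}$ each term is controlled by $C_\lambda|\nabla u|^2$ via $|x\cdot\nabla u|\le \lambda\rho|\nabla u|$. The positivity $n-2\ge 1$ gives \eqref{bdry-notweighted}. For the weighted bound, I take $\eta := r^{-(n-2)/2}\zeta$ with $\zeta$ a Lipschitz cut-off supported in $B_{\lambda\rho}$ equal to $1$ on $B_\rho$, using the truncation-at-scale-$\varepsilon$ approximation and dominated convergence (valid since $u\in C^\infty(\overline{B_1^+})$) exactly as in Lemma \ref{conseqestab2}. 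The same algebraic expansion of $x\cdot\nabla\eta$, $\nabla(\eta^2)$, and $|\nabla\eta|^2$ reproduces the crucial coefficient $(n-2)(10-n)/4$ in front of $\int r^{2-n} u_r^2 \zeta^2\,dx$, which is strictly positive exactly when $3\le n\le 9$; the error terms are supported in $A^+_{\rho,\lambda\rho}$ and bounded by $C_\lambda\rho^{2-n}\int_{A^+_{\rho,\lambda\rho}} |\nabla u|^2$.

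The only genuinely new point compared with the interior proof is the verification that every boundary term appearing along $\partial^0 B_1^+$ is annihilated by the identities $x_n=0$ and $x\cdot\nabla u=0$ on that face; this is the ``main obstacle,'' and it is precisely what forces the choice $\mathbf{c} = x\cdot\nabla u$ (rather than some other natural candidate) in the boundary setting. Beyond this verification, the argument is a straightforward transcription of the interior computation.
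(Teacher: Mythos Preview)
Your proposal is correct and follows essentially the same route as the paper: the choice $\mathbf{c}=x\cdot\nabla u$ vanishing on $\partial^0 B_1^+$, the integration by parts leading to \eqref{eq:07}, the Pohozaev-type identity with the observation that both boundary terms on the flat face vanish, and the cut-off choices $\eta=\psi_\rho$ and $\eta=r^{-(n-2)/2}\zeta$ (with $\varepsilon$-truncation) for the two consequences. The only cosmetic difference is that, for the weighted estimate, the paper discards the nonnegative term $\int_{B_\varepsilon^+}(n-2)\eta_\varepsilon^2|\nabla u|^2\,dx$ before letting $\varepsilon\downarrow 0$, whereas you invoke dominated convergence directly as in Lemma~\ref{conseqestab2}; both are valid here.
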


\begin{proof}[Proof of Lemma \ref{lem:x Du def} and of Proposition \ref{prop:1bdry}]

As in the interior case, we use $\xi=\mathbf{c}\eta =(x\cdot \nabla u)\,\eta$ as test function in the stability inequality \eqref{stabbdry}. From this, repeating the same computations of the interior case (in which now two boundary integrals on $\partial^0B_1^+$ arise, but they vanish), we deduce \eqref{basic}.

Next, let $\lambda>1$ and $\psi\in C^\infty_c (B_\lambda)$ be a nonnegative  radially nonincreasing function with $\psi\equiv 1$ in $B_{1}$. For $\rho\in (0,1/\lambda)$, set $\psi_\rho(x) := \psi(x/\rho)$. Note that $|\nabla \psi_\rho|\leq C_\lambda/\rho$ (where $C_\lambda$ depends only on $\lambda$) and that $\nabla \psi_\rho$ vanishes outside of the annulus $A_{\rho,\lambda\rho}$.
Choosing $\eta=\psi_\rho$ in \eqref{basic}, we immediately deduce
\eqref{bdry-notweighted}.

To derive the last estimate of the lemma, for $a<n$ and $\varepsilon\in(0,\rho)$ we use the Lipschitz function $\eta_\varepsilon(x):=\min\{r^{-a/2},\varepsilon^{-a/2}\}\psi_\rho(x)$ as a test function in \eqref{basic}.
Throwing away the integral $\int_{B^+_\varepsilon}(n-2)\eta_\varepsilon^2|\nabla u|^2dx$ over $B^+_\varepsilon$, we obtain
\begin{eqnarray*}
 \int_{B_{\lambda\rho}^+\setminus B_\varepsilon^+}   \Big\{(n-2-a)    |\nabla u|^2 + \Big( 2a -\frac{a^2}{4} \Big)  u_r^2  \Big\}  r^{-a} \psi_\rho^2\,dx
& &\\
& & \hspace{-8cm} \le C_{a,\lambda}\,  \rho^{-a}\int_{B_{\lambda\rho}^+\setminus B_\rho^+} |\nabla u|^2\,dx
\end{eqnarray*}
for some constant $C_{a,\lambda}$ depending only on $n$, $a$, and $\lambda$.
Choosing $a : = n-2$, since   $ 2a -\frac{a^2}{4}   = (n-2)\bigl(2-\frac{n-2}{4}\bigr)=\frac14(n-2)(10-n) >0$ for $3\le n\le 9$ we deduce
\[  
\int_{B_{\lambda\rho}^+\setminus B_\varepsilon^+} r^{2-n}  u_r^2 \, \psi_\rho^2\,dx \le C_\lambda \,\rho^{2-n}\int_{B_{\lambda\rho}^+\setminus B_\rho^+} |\nabla u|^2\,dx.
\]
Recalling that  $\psi_\rho\equiv 1$ in $B_\rho$ and
letting $\varepsilon\downarrow0$, we conclude the proof.
\end{proof}

\section{Boundary Hessian and $W^{1,2+\gamma}$ estimates}
\label{sect:boundary-higher}

The following lemma, which is based on a Pohozaev identity, will be crucial and used several times in what follows. From now on, we will assume the solution to be nonnegative and the nonlinearity to satisfy some hypotheses.

\begin{lemma}{\rm (\cite[Lemma 5.3]{CFRS})}\label{lem:auxbdry2}
Let $u\in C^\infty(\overline{B^+_1})$ be a nonnegative stable solution of $-\Delta u=f(u)$ in~$B^+_1\subset\R^n$, with $u=0$ on $\partial^0 B^+_1$.
Assume that $f\in C^1(\R)$ is nonnegative and nondecreasing.

Then,
\begin{equation*} 
\| u_\nu\|_{L^2(\partial^0 B^+_{7/8})}  \leq C\|\nabla u\|_{L^2(B^+_1)},
\end{equation*}
where $u_\nu=-u_{n}$ is the exterior normal derivative of $u$ on $\partial^0 B_1^+$ and $C$ is a dimensional constant.
\end{lemma}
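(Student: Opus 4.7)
The plan is to derive a Pohozaev-type identity that isolates $\int_{\partial^0 B_{7/8}^+} u_n^2$ on the flat boundary. The key observation is the pointwise divergence formula
\begin{equation*}
{\rm div}\bigl(\tfrac{1}{2}|\nabla u|^2 e_n - u_n \nabla u\bigr) = -u_n \Delta u = f(u) u_n \qquad \text{in } B_1^+,
\end{equation*}
checked by direct computation. I would multiply by $\eta^2$, where $\eta$ is a radial Lipschitz cutoff with $\eta\equiv 1$ on $B_{7/8}$ and $\eta\equiv 0$ outside $B_{15/16}$, and integrate over $B_1^+$. Since $u \equiv 0$ on $\partial^0 B_1^+$ forces $\nabla u = u_n e_n$ there, the boundary integrand on the flat piece reduces to $\tfrac{1}{2} u_n^2 \eta^2$, while on $\partial^+ B_1^+$ the cutoff vanishes. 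This produces
\begin{equation*}
\int_{\partial^0 B_{7/8}^+} u_n^2 \, d\HH^{n-1} \le 2 \int_{B_1^+} f(u) u_n \, \eta^2 \, dx + C \|\nabla u\|_{L^2(B_1^+)}^2,
\end{equation*}
the error term being controlled because $\nabla(\eta^2)$ is bounded and supported in the annular region $A^+_{7/8,15/16}$.

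The main obstacle is bounding the nonlinear term $\int f(u) u_n \eta^2$ by the Dirichlet energy: $u_n$ is not signed in $B_1^+$ and $f(u)$ has no a priori pointwise control. The decisive trick is that $f(u) u_n$ is in divergence form: with $F(s) := \int_0^s f$, one has $f(u) u_n = \partial_n F(u)$. Integrating by parts in $x_n$, and observing that $F(u) = F(0) = 0$ on $\partial^0 B_1^+$ (where $u = 0$) while $\eta = 0$ on $\partial^+ B_1^+$, I obtain
\begin{equation*}
\int_{B_1^+} f(u) u_n \, \eta^2 \, dx = -\int_{B_1^+} F(u) \, \partial_n(\eta^2)\, dx.
\end{equation*}
Since $f$ is nondecreasing and $u \ge 0$, the elementary bound $0 \le F(u) \le u f(u)$ reduces the task to estimating $\int_{A^+_{7/8,15/16}} u f(u)\, dx$ from above by the energy.

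For this last step, I use the equation in the form $u f(u) = -u \Delta u$ and pick another nonnegative radial cutoff $\psi$ with $\psi \equiv 1$ on $A_{7/8,15/16}$ and supported in $A_{13/16,31/32}$, so that $\psi$ and $\nabla \psi$ vanish near $\partial^+ B_1^+$. Integration by parts yields
\begin{equation*}
\int u f(u)\, \psi \, dx = -\int u \, \Delta u \,\psi\, dx = \int |\nabla u|^2 \psi\, dx - \tfrac{1}{2} \int u^2 \, \Delta \psi\, dx,
\end{equation*}
all boundary contributions vanishing ($u = 0$ on $\partial^0 B_1^+$; $\psi$ and $\nabla\psi$ vanish near $\partial^+ B_1^+$). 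The first term is bounded by $\|\nabla u\|_{L^2(B_1^+)}^2$, and the second by $\|u\|_{L^2(B_1^+)}^2$, which in turn is $\le C\|\nabla u\|_{L^2(B_1^+)}^2$ by the one-dimensional Friedrichs inequality (writing $u(x',x_n) = \int_0^{x_n} u_n(x',t)\,dt$). Chaining the three paragraphs gives the claimed bound. I note, finally, that the stability of $u$ is not used in this argument, though it is assumed in the hypotheses.
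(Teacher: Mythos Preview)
Your proof is correct and follows essentially the same strategy as the paper's: a Pohozaev-type identity to isolate $u_n^2$ on the flat boundary, then the divergence trick $f(u)\partial_n u=\partial_n F(u)$, the bound $0\le F(u)\le uf(u)$ from monotonicity, and finally $uf(u)=-u\Delta u$ controlled by integration by parts and Poincar\'e. The one noteworthy difference is your choice of the constant vector field $e_n$, whereas the paper uses $\mathbf{X}=x+e_n$; your choice is slightly cleaner for the flat boundary since it avoids carrying the extra $(n-2)|\nabla u|^2$ term, and your observation that stability is not actually used here is correct.
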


\begin{proof}
Take a cut-off function  $\eta\in C^\infty_c(B_1)$ such that $\eta=1$ in $B_{7/8}$,
and consider the vector-field $\mathbf{X}(x):= x+\boldsymbol e_n$. Multiplying  the identity 
\[ 
{\rm div}\big( | \nabla u|^2\mathbf{X}- 2(\mathbf{X}\cdot \nabla u) \nabla u \big) =  (n-2) |\nabla u|^2  - 2(\mathbf{X}\cdot \nabla u) \Delta u
\]
by $\eta^2$, integrating in $B_1^+$,
and taking into account that $u_\nu^2=|\nabla u|^2$ on $\partial^0 B^+_1$ since $u=0$ on this set, we obtain
\[\begin{split}
\int_{\partial^0 B^+_{1}}  u_\nu^2 \, \eta^2\,d\mathcal H^{n-1}  - &\int_{B^+_{1}} \big( | \nabla u|^2\mathbf{X}- 2(\mathbf{X}\cdot \nabla u) \nabla u \big)\cdot \nabla\eta^2 \,dx
\\
&\qquad=\int_{B^+_{1}} \big((n-2) |\nabla u|^2  - 2(\mathbf{X}\cdot \nabla u) \Delta u\big) \eta^2 \,dx.
\end{split}\]
Since, for $F(t) : = \int_0^t f(s)ds$ we have $\mathbf{X}\cdot \nabla (F(u))= (\mathbf{X}\cdot \nabla u)  f(u)= - (\mathbf{X}\cdot \nabla u)  \Delta u$, we deduce
\[\begin{split}
\int_{\partial^0B^+_{1}}  u_\nu^2\,  \eta^2\, d\mathcal H^{n-1}  &
\le C\int_{B^+_{1}} |\nabla u|^2\,dx + 2\int_{B^+_{1}}  \mathbf{X}\cdot \nabla  (F(u)) \eta^2\,dx
\\
&=  C\int_{B^+_{1}} |\nabla u|^2\,dx - 2 \int_{B^+_{1}}  F(u) \,{\rm div}(\eta^2\mathbf{X})\,dx.
\end{split}
\]

Observe now that, since $f$ is nondecreasing, $0\leq F(t) \leq tf(t)$ for all $t \geq 0$. 
Hence, noticing that the function $g:=|{\rm div}(\eta^2\mathbf{X})|$ is Lipschitz and that $u$ and $f$ are nonnegative,
we have
\begin{align*}
- \int_{B^+_{1}}  F(u) \,{\rm div}(\eta^2\mathbf{X})\,dx &\le \int_{B^+_{1}} u\,f(u)\,g\,dx
 = -\int_{B^+_{1}} u\,\Delta u\,g\,dx\\
 & = \int_{B^+_{1}}\big(|\nabla u|^2g + u\,\nabla u\cdot \nabla g\big)\,dx \\
 &\leq C\int_{B^+_{1}} \big(u^2+|\nabla u|^2\big)\,dx.
\end{align*}
We conclude using Poincar\'e's inequality for functions $u$ vanishing on $\partial^0 B^+_{1}$.\footnote{\label{poincare-0}It is simple to establish this Poincar\'e inequality. Given any point $x$ in the half-ball, one considers the sphere centered at 0 containing $x$. Using geodesic arcs from its north pole, one joints~$x$ with a point in the equator of the sphere, where $u$ vanishes. In this way, one expresses $u(x)$ as an integral of a derivative of $u$ on the geodesic arc. One concludes by using Cauchy-Schwarz inequality on this integral, and then integrating (with the corresponding Jacobians) with respect to the other angles of $S^{n-1}$ and the distance $r$ to the origin.}
\end{proof}

The following is the boundary analogue of the Sternberg and Zumbrun~\cite{SZ} inequality. It bounds the quantity $\J$ defined in \eqref{defAAA}. Since the test function $|\nabla u|$ used in the interior case does not vanish on $\partial^0 B_1^+$, the proof in~\cite{CFRS} required a new key idea. It consists of using the function \eqref{cbdry}, properly regularized, in the boundary stability inequality.

\begin{lemma}{\rm (\cite[Step 2 of the proof of Proposition 5.2]{CFRS})}\label{lem:bdry-CalA}
Let $u\in C^\infty(\overline{B^+_{1}})$ be a nonnegative stable solution of $-\Delta u=f(u)$ in~$B^+_1\subset\R^n$, with $u=0$ on $\partial^0 B^+_1$.
Assume that $f\in C^1(\R)$ is nonnegative and nondecreasing.

Then,
\begin{equation}\label{bdry-CalA}
 \|\J\|_{L^{2}(B^+_{7/8})}  \le C \| \nabla u\|_{L^{2}(B^+_{1})}
\end{equation}
for some dimensional constant $C$.
\end{lemma}

\begin{proof}
The new idea is to use 
\begin{equation}\label{cbdry}
\mathbf{c}=|\nabla u|-u_{n},
\end{equation}
properly regularized, in the stability inequality \eqref{stabbdry}. Then, by taking $\eta$ with compact support in $B_1$,  $\mathbf{c}\eta$ vanishes on $\partial B^+_{1}$, since $|\nabla u|-u_{n}=0$ on $\partial^0 B_1^+$. 

\vspace{2mm}\noindent
{\it Step 1: We prove that
\begin{equation}\label{hwioghwoih} 
 \int_{B^+_{1/2}} \J^2\,dx \le C\int_{B^+_{3/4}}    |\nabla u| \, |D^2 u|\, dx + C \int_{B^+_{1}}  |\nabla u|^2\,dx,
\end{equation}
where $\J$ is defined by \eqref{defAAA}.}
\vspace{2mm}

For $\delta >0$, we set
\begin{equation*}
\phi_\delta(p) :=  |p|\, \chi_{\{|p|>\delta\}} + \Big( (|p|^2+\delta^2)/(2\delta) \Big) \chi_{\{|p|<\delta\}},
\end{equation*}
a convex $C^{1,1}$ regularization of the Euclidean norm. Notice that $\phi_\delta(\nabla u)\in W^{2,\infty}(B^+_{1})$, since the first derivatives of $\phi_\delta$ match on $\{|p|=\delta\}$.
Moreover, since $u$ is nonnegative and superharmonic,
the Hopf lemma yields
$|\nabla u|\ge c>0$ on $\partial^0 B^+_{3/4}$ for some constant~$c$ ---unless $u\equiv 0$, in which case there is nothing to prove. 
Hence, for $\delta>0$ small enough we have
\begin{equation}\label{inaneighborhood}
\phi_\delta(\nabla u)  = |\nabla u| \quad \mbox{in a neighborhood of }\partial^0 B^+_{3/4}\mbox{ inside }\overline{B^+_{3/4}}.
\end{equation}

Choosing $\delta>0$ small enough such that \eqref{inaneighborhood} holds, we set 
\[
\mathbf{c_\delta} := \phi_\delta(\nabla u) -u_{n}
\]
and we take $\eta\in C^\infty_c (B_{3/4})$ satisfying $\eta\equiv1$ in $B_{1/2}$. Now, since $\mathbf{c_\delta} $ vanishes on $\partial^0 B^+_{3/4}$, we are allowed to take $\xi = \mathbf{c_\delta} \eta$ as a test function in the stability inequality~\eqref{stabbdry}. We obtain that
\begin{equation}\label{ajgpen1}
\int_{B^+_{1}} \big(\Delta \mathbf{c_\delta} +f'(u)\mathbf{c_\delta} \big)\,\mathbf{c_\delta} \, \eta^2 \,dx\le \int_{B^+_{1}} \mathbf{c}_\delta^2 |\nabla \eta|^2 \,dx.
\end{equation}

Note now that, since $\Delta u_{n} + f'(u)u_{n}=0$,
\begin{equation}\label{ajgpen2}
\begin{split}
\bigl(\Delta \mathbf{c_\delta} + f'(u)\mathbf{c_\delta}\bigr) \,\mathbf{c_\delta} &=  \bigl({\Delta[\phi_\delta(\nabla u)]} +f'(u)\phi_\delta(\nabla u)\bigr) \phi_\delta(\nabla u)
\\
& \hspace{25mm}  - \big( {\Delta[\phi_\delta(\nabla u)]} +f'(u) \phi_\delta(\nabla u)\big) u_{n}.
\end{split}
\end{equation}
Next, since $\Delta \nabla  u  = -f'(u) \nabla u$, we also have
\begin{align}
\bigl({\Delta[\phi_\delta(\nabla u)]}  +f'(u)&\phi_\delta(\nabla u)\bigr)  \phi_\delta(\nabla u) 
 \nonumber\\
 & = f'(u)\phi_\delta(\nabla u)\Bigl(\phi_\delta(\nabla u)-\sum\nolimits_j (\partial_j\phi_\delta)(\nabla u)u_j\Bigr) 
 \label{may19}\\
 & \hspace{.5cm} +\,\phi_\delta(\nabla u)\sum\nolimits_{i,j,k} (\partial_{jk}^2\phi_\delta)(\nabla u)u_{ij}u_{ik}.\label{may19b}
\end{align}
Notice that, inside the set $\{|\nabla u|\leq \delta\}$, the term \eqref{may19b} is nonnegative since $\phi_\delta$ is convex, while the term \eqref{may19} is equal to $f'(u)\phi_\delta(\nabla u)(\delta-|\nabla u|^2/\delta)$
and, therefore, it is also nonnegative ---since all three factors are nonnegative.
On the other hand, inside the set $\{|\nabla u|>\delta\}$, the term \eqref{may19} vanishes, while the term \eqref{may19b} equals~$\J^2$ ---recall that $\J$ is defined by \eqref{defAAA}.
Therefore, we conclude that
\begin{equation}\label{ajgpen3}
\bigl({\Delta[\phi_\delta(\nabla u)]} +f'(u)\phi_\delta(\nabla u)\bigr) \phi_\delta(\nabla u)  \ge  \J^2 \, \chi_{\{|\nabla u|>\delta\}}.
\end{equation}

Coming back to \eqref{ajgpen2}, since  $\eta\in C^\infty_c (B_{3/4})$, {integrating by parts} and recalling  \eqref{inaneighborhood} we have
\begin{eqnarray}\label{ahoighwioh}
\hspace{-2.1mm} \int_{B^+_{1}}  {\Delta[\phi_\delta(\nabla u)]}\, u_{n} \,\eta^2\,dx &
\\
\nonumber&\hspace{-40.5mm} =  \int_{B^+_{1}}    \phi_\delta(\nabla u) \,  \Delta u_{n}  \,\eta^2\,dx 
+\int_{B^+_{1}}  \left( 2 \phi_\delta(\nabla u) \, \nabla u_{n}\cdot  \nabla (\eta^2) + \phi_\delta(\nabla u) \,u_{n} \,\Delta (\eta^2)\right)dx
\\
\nonumber&\hspace{-20mm} -\int_{\partial^0 B^+_{1}}  \left( \partial_{n} (|\nabla u|) \,u_{n}\, \eta^2 - |\nabla u|  \partial_{n} (u_{n}\, \eta^2) \right) d\mathcal H^{n-1}.
\end{eqnarray}
Now, on $\partial^0 B^+_{1}$ it holds 
 $\partial_n (|\nabla u|)\, u_{n}  = u_{nn}u_{n}=|\nabla u|  u_{nn}$ and therefore, thanks to Lemma \ref{lem:auxbdry2},
\begin{equation}\label{may19c}\begin{split}
& \bigg| \int_{\partial^0 B^+_{1}}  \left( \partial_{n} (|\nabla u|) \,u_{n}\, \eta^2 - |\nabla u|  \partial_{n} (u_{n}\, \eta^2) \right) d\mathcal H^{n-1}  \bigg| \\
 & \hspace{3cm} \le C\int_{\partial^0 B^+_{3/4}}  |u_\nu|^2\,d\mathcal H^{n-1} \le C\int_{B^+_{1}}|\nabla u|^2\, dx.
\end{split}\end{equation}
Thus, by \eqref{ahoighwioh} and \eqref{may19c}, we conclude that
\begin{align*}
& \left| \int_{B^+_{1}} \big( {\Delta[\phi_\delta(\nabla u)]} +f'(u) \phi_\delta(\nabla u)\big)u_{n}\,  \eta^2\,dx\right|\\ & \hspace{2cm}\le   C\int_{B^+_{3/4}} (|\nabla u|+\delta)\left( |D^2u|  + |\nabla u|\right)\,dx +C\int_{B^+_{1}}|\nabla u|^2\, dx.
\end{align*}

Combining this last bound with \eqref{ajgpen1},  \eqref{ajgpen2}, and \eqref{ajgpen3}, we finally obtain 
\begin{align*}
 &\int_{B^+_{1}} \J^2\eta^2 \chi_{\{|\nabla u|>\delta\}}\,dx \\ & \qquad
 \le C\int_{B^+_{3/4}} \left\{ (|\nabla u|+\delta)^2+\big( |\nabla u|+\delta \big) \left( |D^2u|  + |\nabla u|\right)\right\} \,dx+C\int_{B^+_{1}}|\nabla u|^2\, dx.
\end{align*}
Since $\eta\equiv1$ in $B_{1/2}$, by letting $\delta\downarrow 0$ we conclude \eqref{hwioghwoih}, 
as desired.

\vspace{2mm}\noindent
{\it Step 2: We show that
\[
 \int_{B^+_{{1/2}}} \J^2\,dx \le \ep  \int_{B^+_{1}} \J^2\,dx + \frac{C}{\ep}\|\nabla u\|_{L^2(B^+_{1})}^2
\]
for all $\ep\in (0,1)$.}
\vspace{2mm}

Let  $\eta \in C^\infty_c(B_{7/8})$ satisfy $\eta\equiv1$ in $B_{{3/4}}$. 
From \eqref{hwuighwiu} and 
\eqref{hwuighwiu2} ---note that these are pointwise relations that also hold for solutions in half-balls instead of full balls---, we have that
\begin{equation}\label{ahigohwiowb}
\int_{B^+_{1}}   -2 \Delta u |\nabla u|\,\eta^2\,dx \leq
-\int_{B^+_{1}} {\rm div}\big(  |\nabla u| \nabla u\big) \eta^2\,dx  + C \int_{B^+_{1}}  \J |\nabla u|\,\eta^2\,dx.
\end{equation}
This yields, since $|D^2u|\leq -\Delta u +C\J$ a.e.\ by \eqref{HessbyLapl} and the fact that $\Delta u \leq 0$, \eqref{ahigohwiowb} yields
\begin{equation}\label{ahigohwiowb2}
\int_{B^+_{1}}|\nabla u|\, |D^2u|\,\eta^2\,dx \leq 
\biggl|\frac12\int_{B^+_{1}} {\rm div}\big(  |\nabla u| \nabla u\big) \eta^2\,dx  \biggr|+C  \int_{B^+_{1}} \J\,|\nabla u|\,\eta^2\,dx.
\end{equation}
On the other hand, 
using Lemma \ref{lem:auxbdry2} we obtain
\begin{equation}\label{ahigohwiowb3}
\begin{split}
\hspace{-1mm}\biggl|\int_{B^+_{1}} {\rm div}\big(  |\nabla u| \nabla u\big) \eta^2\,dx\biggr|  &=
\biggl|-\int_{\partial^0 B^+_{1}}   (u_\nu)^2 \eta^2 \,d\mathcal H^{n-1}   - \int_{B_1^+}  |\nabla u| \nabla u \cdot \nabla (\eta^2)\,dx\biggr|\\
&\leq C \int_{B^+_{1}}   |\nabla u|^2\,dx.
\end{split}
\end{equation}
Thus, by \eqref{ahigohwiowb2}
and \eqref{ahigohwiowb3},
we see that
\begin{equation}
\label{eq:Du D2u}
\int_{B^+_{1}} |\nabla u|\, |D^2u| \,\eta^2\,dx \le C\int_{B^+_{1}} \J \,|\nabla u|\,\eta^2\,dx + C\int_{B^+_{1}} |\nabla u|^2\,dx.
\end{equation}

Recalling that $\eta\equiv1$ in $B_{{3/4}}$,
\eqref{hwioghwoih} and \eqref{eq:Du D2u} yield, for every $\ep\in(0,1)$,
\[\begin{split}
 \int_{B^+_{{1/2}}} \J^2\,dx 
&\le C \int_{B^+_{{3/4}}} |\nabla u|\, |D^2u| \,dx+C\|\nabla u\|_{L^2(B^+_{1})}^2 
\\
&\le   C \int_{B^+_{1}} \J\, |\nabla u|\,dx + C\|\nabla u\|_{L^2(B^+_{1})}^2\\
&\le \ep  \int_{B^+_{1}} \J^2\,dx + \frac{C}{\ep}\|\nabla u\|_{L^2(B^+_{1})}^2.
\end{split}\]

\vspace{2mm}\noindent
{\it Step 3: Conclusion.} 
\vspace{2mm}

We first claim that
\begin{equation}\label{angoiwnown}
\rho^2\| \J \|^2_{L^2(\R^n_+\cap B_{\rho/2}(y))}  \le  C\ep \rho^2\| \J \|^2_{L^2(\R^n_+\cap B_{\rho}(y))} +
\frac{C}{\ep}  \|\nabla u\|_{L^2(B^+_1)}^2 
\end{equation}
for every ball $B_{\rho}(y)\subset B_{1}$ and $\ep\in(0,1)$. 
Note that $B_{\rho}(y)$ is not necessarily centered at a point on $\partial \R^n_+$. The proof of the lemma will be finished once this is shown. Indeed, \eqref {angoiwnown} and Lemma \ref{lem_abstract}, applied to the subadditive quantity 
$$
\sigma(B):=\| \J \|^2_{L^2(\R^n_+ \cap B)} \quad\text{for } B\subset B_1
$$ 
(note here that the $L^2$ norm is not taken in $B$, but in its intersection with the upper half-space) lead to \eqref{bdry-CalA} with $B^+_{{7/8}}$ replaced by $B^+_{1/2}$. Finally, a standard
covering and scaling argument gives the same estimate with $B^+_{{7/8}}$ in its left-hand side.

It remains to show \eqref{angoiwnown}. To do this, it is easy to check that there is a dimensional number of balls $\{B_{\rho/16} (y_i)\}_i$ and $\{B_{3\rho/16} (z_j)\}_j$ which cover $\R^n_+\cap B_{\rho/2}(y)$ and have the following properties. Each ball $B_{\rho/16} (y_i)$ is interior, in the sense that the ball with twice its radius satisfies $B_{\rho/8} (y_i)\subset \R^n_+\cap B_{\rho}(y)\subset B_1^+$. The part of $\R^n_+\cap B_{\rho/2}(y)$ not covered by any of the previous balls is covered by the union of the half-balls $B^+_{3\rho/16} (z_j)$, where $z_j\in \partial\R^n_+$. In addition, the half-ball with twice its radius satisfies $B^+_{3\rho/8} (z_j)\subset \R^n_+\cap  B_\rho (y)\subset B^+_1$.\footnote{This can be easily seen as follows.
Take a dimensional number of points $y_k\in B_{\rho/2} (y)$ such that $B_{\rho/2} (y) \subset\cup_k B_{\rho/16} (y_k)$. Keep those balls which are interior, in the sense that $B_{\rho/8} (y_i)\subset \R^n_+$. Note that we will also have $B_{\rho/8} (y_i)\subset B_{\rho}(y)$. To cover all of $\R^n_+\cap B_{\rho/2}(y)$, it remains to consider those balls $B_{\rho/16} (y_j)$ which intersect $\R^n_+$ and with $B_{\rho/8} (y_j)\not\subset\R^n_+ $. It follows that there exists a point $z_j\in \partial\R^n_+ \cap B_{\rho/8} (y_j)$. Now, the claimed covering property holds, since $B_{\rho/16} (y_j) \subset B_{3\rho/16} (z_j)$. In addition, $B_{3\rho/8} (z_j)\subset B_\rho (y)$.}

Now, in each interior ball $B_{\rho/16} (y_i)$ we use Lemma~\ref{conseqestab}, rescaled, to deduce
$$
\rho^2\| \J \|^2_{L^2(B_{\rho/16}(y_i))}  \le C \|\nabla u\|_{L^2(B_{\rho/8}(y_i))}^2   \le \frac{C}{\ep} \|\nabla u\|_{L^2(B_1^+)}^2,
$$
since $\ep <1$. Instead, for the balls $\{B_{3\rho/16} (z_j)\}_j$ we use the statement of Step~2 above (after rescaling and a translation to make $z_j\in\partial \R^n_+$ to be the origin) to get
\[\begin{split}
\rho^2\| \J \|^2_{L^2(B^+_{3\rho/16}(z_j))}  & \le  \ep \rho^2\| \J \|^2_{L^2(B^+_{3\rho/8}(z_j))} +
\frac{C}{\ep}  \|\nabla u\|_{L^2(B^+_{3\rho/8}(z_j))}^2 \\
& \le  \ep \rho^2\| \J \|^2_{L^2(\R^n_+\cap B_{\rho}(y))} +
\frac{C}{\ep}  \|\nabla u\|_{L^2(B^+_1)}^2.
\end{split}\]
Adding all these inequalities we obtain \eqref{angoiwnown} ---notice that $\ep$ is multiplied by a dimensional constant $C$ in \eqref{angoiwnown}.
\end{proof}

The following $L^1$ estimates for the full Hessian will be useful in several occasions, not only in this section to control the gradient in $L^2$ by the function in $L^1$, but also in next section for the control of the $L^1$ norm of $u$ by its radial derivative in~$L^1$.
These estimates were not explicitly stated in \cite{CFRS}, but follow from arguments in that paper.

\begin{lemma}\label{lem:Hessbdry}
Let $u\in C^\infty(\overline{B^+_1})$ be a nonnegative stable solution of $-\Delta u=f(u)$ in~$B^+_1\subset\R^n$, with $u=0$ on $\partial^0 B^+_1$.
Assume that $f\in C^1(\R)$ is nonnegative and nondecreasing.

Then,
\begin{equation}\label{bdry-product}
\Vert\, |\nabla u| \, D^2 u \, \Vert_{L^1(B^+_{3/4})} \leq  C \Vert \nabla u \Vert^2_{L^{2}(B^+_{1})}
\end{equation}
and
\begin{equation}\label{massHessbdry}
\|D^2u\|_{L^1({B_{3/4}^+})} \leq  C \|\nabla u\|_{L^2(B^+_1)} 
\end{equation}
for some dimensional constant $C$.
\end{lemma}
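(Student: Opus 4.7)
The plan is to mirror the interior argument of Lemma~\ref{lem:hessian}, but with the two boundary-adapted ingredients just established: Lemma~\ref{lem:auxbdry2}, which controls $u_\nu$ in $L^2$ on $\partial^0 B^+_{7/8}$, and Lemma~\ref{lem:bdry-CalA}, which bounds $\|\J\|_{L^2(B^+_{7/8})}$ by the Dirichlet energy. A key observation is that the first estimate \eqref{bdry-product} is essentially a corollary of the inequality \eqref{eq:Du D2u}, which was already derived in the course of proving Lemma~\ref{lem:bdry-CalA}.

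For \eqref{bdry-product}, I would take $\eta\in C^\infty_c(B_{7/8})$ with $0\le\eta\le1$ and $\eta\equiv1$ on $B_{3/4}$ --- the same cut-off used for \eqref{eq:Du D2u}. Applying \eqref{eq:Du D2u}, followed by Cauchy-Schwarz on $\int \J\,|\nabla u|\,\eta^2\,dx$ and the $L^2$ bound \eqref{bdry-CalA} for $\J$, one obtains
\[
\int_{B^+_{3/4}} |\nabla u|\,|D^2u|\,dx
\le \int_{B^+_{1}} |\nabla u|\,|D^2u|\,\eta^2\,dx
\le C\,\|\J\|_{L^2(B^+_{7/8})}\|\nabla u\|_{L^2(B^+_{1})} + C\|\nabla u\|^2_{L^2(B^+_{1})}
\le C\|\nabla u\|^2_{L^2(B^+_{1})},
\]
which is \eqref{bdry-product}.

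For \eqref{massHessbdry}, the starting point is the pointwise inequality \eqref{HessbyLapl}, namely $|D^2u|\le |\Delta u|+C\J$ a.e.\ in $B^+_{1}$. The $\J$ contribution is handled by Cauchy-Schwarz and \eqref{bdry-CalA}:
\[
\|\J\|_{L^1(B^+_{3/4})} \le C\,\|\J\|_{L^2(B^+_{7/8})} \le C\,\|\nabla u\|_{L^2(B^+_{1})}.
\]
For $|\Delta u|$, I would exploit the superharmonicity $-\Delta u = f(u)\ge 0$. Picking $\zeta\in C^\infty_c(B_{7/8})$ with $0\le\zeta\le1$ and $\zeta\equiv1$ on $B_{3/4}$, and integrating by parts,
\[
\int_{B^+_{3/4}} |\Delta u|\,dx \le -\int_{B^+_{1}} \Delta u\cdot\zeta\,dx = \int_{B^+_{1}} \nabla u\cdot\nabla\zeta\,dx \;-\; \int_{\partial^0 B^+_{1}} \zeta\,u_\nu\,d\mathcal H^{n-1},
\]
where only the flat part of $\partial B^+_{1}$ contributes since $\zeta$ is supported away from $\partial^+ B^+_{1}$, and $u_\nu=-u_n$ denotes the exterior normal derivative on $\partial^0 B^+_{1}$. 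Cauchy-Schwarz bounds the first term on the right by $C\|\nabla u\|_{L^2(B^+_{1})}$, and, by Cauchy-Schwarz on the disc $\partial^0 B^+_{7/8}$ together with Lemma~\ref{lem:auxbdry2}, the boundary integral is bounded by $C\|u_\nu\|_{L^2(\partial^0 B^+_{7/8})}\le C\|\nabla u\|_{L^2(B^+_{1})}$ as well. Adding both contributions yields \eqref{massHessbdry}.

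The main obstacle, in comparison with the interior proof of Lemma~\ref{lem:hessian}, is precisely the extra boundary flux $\int_{\partial^0 B^+_{1}} \zeta\,u_\nu\,d\mathcal H^{n-1}$ that appears when integrating $\Delta u$ by parts: this term has no interior counterpart, and Lemma~\ref{lem:auxbdry2} is tailor-made to absorb it. Once its trace estimate on $u_\nu$ is available, the remainder of the argument is essentially a transcription of the interior one.
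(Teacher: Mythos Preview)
Your proof is correct and follows essentially the same route as the paper: for \eqref{bdry-product} you invoke \eqref{eq:Du D2u} together with Cauchy--Schwarz and Lemma~\ref{lem:bdry-CalA}, and for \eqref{massHessbdry} you combine the pointwise bound $|D^2u|\le |\Delta u|+C\J$ with Lemma~\ref{lem:bdry-CalA} and an integration by parts of $-\Delta u$ against a cut-off, absorbing the boundary flux via Lemma~\ref{lem:auxbdry2}. This is exactly the argument in the paper.
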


\begin{proof}
We first show \eqref{bdry-product}. By taking  $\eta \in C^\infty_c(B_{7/8})$ with $\eta\equiv1$ in $B_{3/4}$, the estimate follows from \eqref{eq:Du D2u}, the Cauchy-Schwarz inequality, and Lemma~\ref{lem:bdry-CalA}.

To prove \eqref{massHessbdry}, we choose a nonnegative function $\psi\in C_c^\infty(B_{7/8})$ with $\psi\equiv 1$ in $B_{3/4}$. Since $-\Delta u \geq 0$, we see that
\begin{equation}\label{massLaplbdry}
\begin{split}
\|\Delta u\|_{L^1({B_{3/4}^+})} & \leq \int_{B_{7/8}^+} -\Delta u \,\psi\, dx=-\int_{\partial^0  B_{7/8}^+} u_\nu \psi \, d\mathcal H^{n-1}+\int_{B_{7/8}^+} \nabla u\cdot \nabla \psi\,dx \\ & \leq C \|\nabla u\|_{L^2(B^+_1)},
\end{split}
\end{equation}
where we have used Lemma \ref{lem:auxbdry2}. 

Recall now that $|D^2u|\leq |\Delta u| + C {\mathcal A}$ a.e.\ in $B_{3/4}^+$ ---since the pointwise inequality~\eqref{HessbyLapl}  also holds for solutions in half-balls instead of full balls.
Now, estimate \eqref{massHessbdry} follows from this inequality, \eqref{massLaplbdry}, and Lemma~\ref{lem:bdry-CalA}.
\end{proof}

With the weighted Hessian estimate \eqref{bdry-product} at hand, we can now control the gradient in~$L^2$ by the solution in $L^1$. By using the new interpolation inequalities of Appendix~\ref{app:interp}, we give a simpler proof of this result than in~\cite{CFRS} ---in particular we do not need to use the higher integrability $L^{2+\gamma}$ for $|\nabla u|$.

\begin{proposition}{\rm (\cite[Proposition 5.5]{CFRS})}\label{prop:bdry-nabla2byL1}
Let $u\in C^\infty(\overline{B^+_{1}})$ be a nonnegative stable solution of $-\Delta u=f(u)$ in~$B^+_1\subset\R^n$, with $u=0$ on $\partial^0 B^+_1$.
Assume that $f\in C^1(\R)$ is nonnegative and nondecreasing.

Then,
\begin{equation*}
 \|\nabla u\|_{L^{2}(B^+_{1/2})}  \le C \| u\|_{L^{1}(B^+_{1})}, 
\end{equation*}
for some dimensional constant $C$.
\end{proposition}

\begin{proof}
We cover $B^+_{1/2}$ (except for a set of measure zero) with a family of disjoint open cubes $Q_j\subset\R^n_+$ of side-length which depends only on $n$ and is small enough such that $Q_j\subset B^+_{3/4}$.
We now combine the interpolation inequalities of  Propositions~\ref{prop5.2} and~\ref{Nash} in each cube $Q_j$ (we rescale them to pass from the unit cube to the cubes~$Q_j$), used with $p=2$ and $\tilde\ep= \ep^{3/2}$ for any given $\ep\in(0,1)$.  We obtain that
\begin{equation*}
\int_{Q_j}\abs{\nabla u}^{2}dx \leq C\varepsilon \int_{Q_j}\abs{\nabla u}\lvert D^2u\rvert\,dx+ C\varepsilon \int_{Q_j}\abs{\nabla u}^2dx+C\varepsilon^{-2-\frac{3n}{2}}\left( \int_{Q_j}\abs{u}\,dx\right)^2.
\end{equation*} 
Now, using $Q_j\subset B^+_{3/4}$ and estimate 
\eqref{bdry-product} from Lemma~\ref{lem:Hessbdry},
we deduce
\begin{equation*}
\int_{Q_j}\abs{\nabla u}^{2}dx \leq C\varepsilon \int_{B^+_1}\abs{\nabla u}^2dx+C\varepsilon^{-2-\frac{3n}{2}}\left( \int_{B^+_1}\abs{u}\,dx\right)^2.
\end{equation*} 
Adding up all these inequalities (note that the number of cubes $Q_j$ depends only on~$n$), we get
\begin{equation*}
\|\nabla u\|_{L^{2}(B^+_{1/2})}^2 \le C\varepsilon  \|\nabla u\|_{L^{2}(B^+_{1})}^2 + C\varepsilon^{-2-\frac{3n}{2}} \|u\|_{L^{1}(B^+_{1})}^2.
\end{equation*}

This estimate, applied to rescaled solutions, yields
\begin{equation}\label{bdry-grad-eps}
\rho^{n+2}\int_{B^+_{\rho/2}}|\nabla u|^2\,dx \leq 
C\varepsilon \rho^{n+2}\int_{B^+_{\rho}}|\nabla u|^2\,dx
+ C\varepsilon^{-2-\frac{3n}{2}}\left(\int_{B^+_{\rho}}|u|\,dx\right)^2
\end{equation}
for all $\rho <1$ and  $\ep\in(0,1)$.

We now claim that, for all balls $B_\rho(y)\subset B_1$ (not necessarily contained in $B_1^+$) and every  $\ep\in(0,1)$, we have  
\begin{equation}\label{all-balls-grad-eps}
\rho^{n+2}\int_{\R^n_+\cap B_{\rho/2}(y)}|\nabla u|^2\,dx \leq C\varepsilon \rho^{n+2}\int_{\R^n_+\cap B_{\rho}(y)}|\nabla u|^2\,dx
+C\varepsilon^{-2-\frac{3n}{2}}\Vert u \Vert^2_{L^1(B_1^+)}.
\end{equation}
To show this, we proceed exactly as in Step~3 of the proof of Lemma~\ref{lem:bdry-CalA}, where we proved \eqref{angoiwnown}. That is, we use a covering by balls which are either centered at $\partial\R^n_+$ or interior to $\R^n_+$. For the first ones we use \eqref{bdry-grad-eps} (with $\rho$ replaced by $3\rho/8$), while for the interior ones we employ Proposition~\ref{prop:2}, rescaled, and use that $\varepsilon <1$. We conclude \eqref{all-balls-grad-eps}.

Finally, by Lemma \ref{lem_abstract} applied to the subadditive quantity
$\sigma(B):=\|\nabla u\|_{L^2(\R^n_+\cap B)}^2$, the result follows.
\end{proof}

The following higher integrability result from \cite{CFRS} is of interest by itself but, in addition, it will be needed in next section to control, following the later paper~\cite{C22quant}, the $L^1$ norm of $u$ in a half-annulus by its radial derivative in~$L^1$.

\begin{lemma}{\rm (\cite[Proposition 5.2]{CFRS})}\label{lem:bdry-gamma-2}
Let $u\in C^\infty(\overline{B^+_{1}})$ be a nonnegative stable solution of $-\Delta u=f(u)$ in~$B^+_1\subset\R^n$, with $u=0$ on $\partial^0 B^+_1$.
Assume that $f\in C^1(\R)$ is nonnegative and nondecreasing.

Then,
\begin{equation*}
 \|\nabla u\|_{L^{2+\gamma}(B^+_{3/4})}  \le C \| \nabla u\|_{L^{2}(B^+_{1})}
\end{equation*}
for some dimensional constants $\gamma>0$ and $C$.
\end{lemma}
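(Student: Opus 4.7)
The plan is to mimic the proof of the interior bound \eqref{eq:W12 L1 int} in Theorem \ref{thm:0} (given in Section \ref{sect:higher}), substituting each interior ingredient by the boundary analogue already developed in this section. After normalizing $\|\nabla u\|_{L^2(B^+_1)} = 1$, the argument decomposes into a Sobolev-type bound on $u$, a boundary level-set estimate for $|\nabla u|^2$, and a coarea/H\"older interpolation.

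First, I would establish $\|u\|_{L^p(B^+_1)} \leq C$ for some dimensional $p > 2$. Since $u \geq 0$ and vanishes on $\partial^0 B^+_1$, I would extend $u$ by odd reflection across $\{x_n = 0\}$; the resulting $\tilde u \in W^{1,2}(B_1)$ has zero average and satisfies $\|\nabla \tilde u\|_{L^2(B_1)} \leq C$. The Sobolev-Poincar\'e inequality with zero average then gives $\|\tilde u\|_{L^p(B_1)} \leq C$, hence $\|u\|_{L^p(B^+_1)} \leq C$. Compared to the interior case, no subtraction of the average of $u$ is needed here.

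Second, I would prove the boundary level-set estimate $\int_{\{u=t\} \cap B^+_{3/4}} |\nabla u|^2 \, d\HH^{n-1} \leq C$ for a.e.\ $t > 0$. The key point is that since $u = 0$ on $\partial^0 B^+_1$, for every $t > 0$ the open set $\{u > t\} \cap B^+_1$ has empty intersection with $\partial^0 B^+_1$. Fixing $\eta \in C^\infty_c(B_{7/8})$ with $\eta \equiv 1$ on $B_{3/4}$, Sard's theorem and the divergence theorem on $\{u > t\} \cap B^+_1$ yield
\begin{equation*}
\int_{\{u=t\} \cap B^+_1} |\nabla u|^2 \eta^2 \, d\HH^{n-1} = -\int_{\{u > t\} \cap B^+_1} {\rm div}\bigl(|\nabla u|\, \nabla u \, \eta^2\bigr) \, dx,
\end{equation*}
with no boundary contribution from $\partial^+ B^+_1$ (where $\eta$ vanishes) nor from $\partial^0 B^+_1$ (disjoint from $\overline{\{u > t\}}$ for $t > 0$). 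Expanding the divergence, using the pointwise identities \eqref{hwuighwiu} and \eqref{hwuighwiu2} (which are local and hence also hold in half-balls), and applying Cauchy-Schwarz together with Lemma \ref{lem:bdry-CalA} and the weighted Hessian bound \eqref{bdry-product} of Lemma \ref{lem:Hessbdry}, the right-hand side is bounded by $C \|\nabla u\|_{L^2(B^+_1)}^2 = C$.

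Finally, mimicking the interior coarea/H\"older argument with $h(t) := \max\{1, t\}$ in place of $\max\{1, |t - \bar u|\}$, I would choose dimensional $q > 1$ and $\theta \in (0, 1/3)$ satisfying $p/q = (1-\theta)/\theta$, apply the coarea formula, and use H\"older's inequality with exponents $1/\theta$ and $1/(1-\theta)$ (noting $p\theta - q(1-\theta) = 0$) to obtain $\int_{B^+_{3/4}} |\nabla u|^{3-3\theta}\, dx \leq I_1^{\theta} I_2^{1-\theta}$, where $I_1$ is the double integral of $h(t)^p |\nabla u|^{-1}$ over $\{u = t\} \cap B^+_1 \cap \{|\nabla u| \neq 0\}$ and then in $t$, and $I_2$ is the double integral of $h(t)^{-q} |\nabla u|^2$ over $\{u = t\} \cap B^+_{3/4}$ and then in $t$. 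By a second use of coarea, $I_1$ is controlled by the Sobolev bound from the first step; $I_2$ is controlled by the level-set estimate from the second step together with the integrability of $h^{-q}$ at infinity (which uses $q > 1$). Setting $\gamma := 1 - 3\theta > 0$ yields the desired bound. The main obstacle is the second-step level-set estimate; it relies on the simultaneous use of $u \geq 0$, the vanishing of $u$ on $\partial^0 B^+_1$ (which keeps $\{u = t\}$ away from the flat boundary for $t > 0$), and the nontrivial boundary Hessian and $\J$ estimates of Lemmas \ref{lem:bdry-CalA} and \ref{lem:Hessbdry}.
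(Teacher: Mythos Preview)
Your proposal is correct and follows essentially the same route as the paper: a boundary level-set estimate via the divergence theorem (noting that $\{u>t\}$ stays away from $\partial^0 B^+_1$ for $t>0$), a Sobolev bound exploiting the vanishing of $u$ on $\partial^0 B^+_1$, and the same coarea/H\"older interpolation with $h(t)=\max\{1,t\}$. The only cosmetic difference is that the paper invokes \eqref{eq:Du D2u} together with two nested cut-offs (supported in $B_{7/8}$ and in $B_{4/5}$) rather than \eqref{bdry-product}, so in your version you would need a trivial rescaling or radius adjustment, since \eqref{bdry-product} is stated on $B^+_{3/4}$ while your cut-off $\eta$ has support in $B_{7/8}$.
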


\begin{proof}
\vspace{2mm}\noindent
{\it Step 1: We show  that, for almost every $t>0$,} 
\begin{equation}\label{bdry-level}
 \int_{\{u=t\}\cap B_{3/4}^+} |\nabla u|^2 \, d\HH^{n-1}  \leq C\int_{B^+_{1}} |\nabla u|^2\,dx.
\end{equation}

Take  $\eta \in C^\infty_c(B_{7/8})$ with $\eta\equiv1$ in $B_{{4/5}}$. We combine \eqref{hwuighwiu} and \eqref{hwuighwiu2} ---note that these are pointwise relations that also hold for solutions in half-balls instead of full balls--- with Lemma~\ref{lem:bdry-CalA} and \eqref{eq:Du D2u}, to get
\begin{align*}
\int_{B^+_{{4/5}}} \big| {\rm div}(|\nabla u|\, \nabla u) \big|  \,dx &\le C \int_{B^+_{{4/5}}} |\nabla u| \,\J\,dx +\int_{B^+_{{4/5}}} -2|\nabla u|\,\Delta u\,dx  \\
& \hspace{-3cm}\leq C\biggl(\int_{B^+_{{4/5}}} |\nabla u|^2\,dx\biggr)^{1/2} \biggl(\int_{B^+_{{4/5}}} \J^2\,dx\biggr)^{1/2}+ C\int_{B^+_{{7/8}}} \J^2\,dx+C\int_{B^+_{1}} |\nabla u|^2\,dx \\
 &\hspace{-3cm}\le C\int_{B^+_{1}} |\nabla u|^2\,dx.
\end{align*}

Now, let $\zeta \in C^\infty_c(B_{4/5})$ with $\zeta\equiv1$ in $B_{3/4}$. By Sard's theorem, $\{u=t\}$ is a smooth hypersurface for almost every $t>0$. For such values of $t$, noting that $\partial( \{u>t\} \cap B_{4/5}^+)$ does not intersect $\partial^0 B_{4/5}^+$ and using the previous bound we deduce
\begin{equation*}
\begin{split}
 \int_{\{u=t\}\cap B_{3/4}^+} |\nabla u|^2 \, d\HH^{n-1} & \\  & \hspace{-2cm} \leq \int_{\{u=t\}\cap B_{4/5}^+} |\nabla u|^2\zeta^2 \,d\HH^{n-1}
=- \int_{\{u>t\} \cap B_{4/5}^+}  {\rm div}\big(|\nabla u|\, \nabla u \,\zeta^2\big) \,dx \\
 & \hspace{-2cm} \le  C\int_{B^+_{1}} |\nabla u|^2\,dx,
\end{split}
\end{equation*}
as claimed.

\vspace{2mm}\noindent
{\it Step 2: Conclusion.}
\vspace{2mm}

Here it is convenient to assume $\|\nabla u\|_{L^2(B^+_{1})}=1$, which can be achieved  after multiplying $u$ by a constant. 

Setting $h(t)=\max\{1,t\}$, by the Sobolev embedding for functions vanishing on $\partial^0 B^+_{1}$,\footnote{\label{Sob+Poi}This Sobolev inequality follows combining the Sobolev embedding \cite[Theorem 7.26]{GT} applied to $u$ with the Poincar\'e inequality with exponent 2 proved in Footnote~\ref{poincare-0}.} 
we see that
\begin{equation}\label{ashgowobb}
\begin{split}
\int_{\R^+} dt & \int_{\{u=t\}\cap  B^+_{1}\cap \{|\nabla u|\neq0\}} d\HH^{n-1} h(t)^p \,|\nabla u|^{-1} \\
& \qquad\qquad \leq |B^+_{1}\cap \{u<1\}|+ \int_{B^+_{1}} u^p  \, dx\leq C
\end{split}
\end{equation}
for some $p>2$.
Hence, choosing dimensional constants  $q>1$ and $\theta\in (0,1/3)$ such that $p/q = (1-\theta)/\theta$, we see that
\begin{equation*}
\begin{split}
\int_{B^+_{3/4}} |\nabla u|^{3-3\theta}\,dx  &=\int_{\R^+} dt\int_{\{u=t\}\cap B^+_{3/4}\cap \{|\nabla u|\neq0\}} d\HH^{n-1} h(t)^{p\theta -q(1-\theta)}  |\nabla u|^{-\theta + 2(1-\theta)} 
\\
&\le   \left(\int_{\R^+} dt\int_{\{u=t\}\cap B^+_1\cap \{|\nabla u|\neq0\}}d\HH^{n-1}   h(t)^{p}  |\nabla u|^{-1} \right)^\theta 
\\
&\qquad\qquad \cdot \bigg(\int_{\R^+} dt\, h(t)^{-q}  \int_{\{u=t\}\cap B^+_{3/4}}d\HH^{n-1}   |\nabla u|^2   \bigg)^{1-\theta}.
\end{split}
\end{equation*}
 By \eqref{ashgowobb} and \eqref{bdry-level}, this yields
\[
\int_{B^+_{3/4}} |\nabla u|^{3-3\theta}  \,dx \le C,
\]
which concludes the proof.
\end{proof}

We can now give the

\begin{proof}[Proof of Proposition \ref{prop:2bdry}]
By rescaling Lemma \ref{lem:bdry-gamma-2},  we deduce that 
$$
\|\nabla u\|_{L^{2+\gamma}(B^+_{1/4})}  \le  \|\nabla u\|_{L^{2+\gamma}(B^+_{3/8})}  \le C \| \nabla u\|_{L^{2}(B^+_{1/2})}.
$$
 This and Proposition~\ref{prop:bdry-nabla2byL1} conclude the proof.
\end{proof}

For future use, note that putting together Proposition~\ref{prop:bdry-nabla2byL1} with \eqref{massHessbdry} (rescaled), we obtain that
\begin{equation}\label{last-gamma}
\|D^2 u\|_{L^{1}(B^+_{1/4})}  \le  \|D^2 u\|_{L^{1}(B^+_{3/8})} \le  C\|\nabla u\|_{L^{2}(B^+_{1/2})}  \le C \| u\|_{L^{1}(B^+_{1})}.
\end{equation}

When proving Proposition \ref{prop:3bdry} in next section, we will need the following bounds in half-annuli. They follow from a simple covering and scaling argument combined with the boundary estimates that we have just proven and their interior analogues (i.e., those in balls whose double is contained in $\R^n_+$).

\begin{corollary}\label{corol:Deltabdry}
Let $u\in C^\infty(\overline{B^+_1})$ be a nonnegative stable solution of $-\Delta u=f(u)$ in~$B^+_1\subset\R^n$, with $u=0$ on $\partial^0 B^+_1$.
Assume that $f\in C^1(\R)$ is nonnegative and nondecreasing. Let $0<\rho_1<\rho_2<\rho_3<\rho_4\leq 1$.

Then,
\begin{equation} \label{prop5.2CFRS-ann}
\|\nabla u\|_{L^{2+\gamma}(A^+_{\rho_2,\rho_3})}
\le C_{\rho_i} \,  \|u\|_{L^{1}(A^+_{\rho_1,\rho_4})}
\end{equation}
and
\begin{equation} \label{hess-ann}
\|D^2 u\|_{L^{1}(A^+_{\rho_2,\rho_3})}
\le C_{\rho_i} \,  \|u\|_{L^{1}(A^+_{\rho_1,\rho_4})}
\end{equation}
for some dimensional constant $\gamma>0$ and some constant $C_{\rho_i}$ depending only on $n$, $\rho_1$, $\rho_2$, $\rho_3$, and $\rho_4$.
\end{corollary}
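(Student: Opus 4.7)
The plan is a standard covering argument that reduces the half-annulus estimates to the half-ball estimates already established in this section together with their interior analogues from Part~I. Fix $0<\rho_1<\rho_2<\rho_3<\rho_4\leq 1$ and set $\delta:=\frac{1}{4}\min\{\rho_2-\rho_1,\rho_3-\rho_2,\rho_4-\rho_3\}>0$. Choose a finite family of balls $\{B_\delta(y_k)\}_k$, depending only on $n$ and the $\rho_i$'s, which covers $\overline{A^+_{\rho_2,\rho_3}}$ and is partitioned as follows: the "interior" balls, for which $B_{2\delta}(y_k)\subset \R^n_+\cap A_{\rho_1,\rho_4}$, and the "boundary" balls, for which there exists a point $z_k\in\partial\R^n_+$ with $y_k\in B_\delta(z_k)$ and $B_{4\delta}^+(z_k)\subset A^+_{\rho_1,\rho_4}$. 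Such a covering exists by the same elementary argument used in Step~3 of the proof of Lemma~\ref{lem:bdry-CalA}.

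On each interior ball $B_{2\delta}(y_k)$, the rescaled function $v(x):=u(y_k+2\delta x)$ is a nonnegative stable solution of $-\Delta v=\tilde f(v)$ in $B_1$, with $\tilde f:=4\delta^2 f\geq 0$. Theorem~\ref{thm:2} and the interior $W^{1,2+\gamma}$ bound from Theorem~\ref{thm:0} apply to $v$, and the interior estimate \eqref{estdivbis} combined with Theorem~\ref{thm:2} gives $\|D^2 v\|_{L^1(B_{1/2})}\le C\|v\|_{L^1(B_1)}$. Undoing the rescaling yields
\[
\|\nabla u\|_{L^{2+\gamma}(B_\delta(y_k))}+\|D^2 u\|_{L^1(B_\delta(y_k))}\le C_\delta \,\|u\|_{L^1(B_{2\delta}(y_k))}\le C_\delta\,\|u\|_{L^1(A^+_{\rho_1,\rho_4})}.
\]

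On each boundary ball, work instead with the rescaled half-ball function $v(x):=u(z_k+4\delta x)$, which is a nonnegative stable solution of $-\Delta v=\tilde f(v)$ in $B_1^+$ vanishing on $\partial^0 B_1^+$, with $\tilde f:=16\delta^2 f$ nonnegative and nondecreasing. Theorem~\ref{thm:2bdry} applied to $v$ gives the higher integrability of $\nabla v$ in $B_{1/4}^+$, while the chain \eqref{last-gamma} (which bundles Lemma~\ref{lem:bdry-nabla2byL1} with \eqref{massHessbdry}) yields $\|D^2 v\|_{L^1(B_{1/4}^+)}\le C\|v\|_{L^1(B_1^+)}$. After undoing the rescaling, and since $B_\delta(y_k)\cap\R^n_+\subset B_\delta^+(z_k)$ lies well inside the image of $B_{1/4}^+$, we obtain
\[
\|\nabla u\|_{L^{2+\gamma}(B_\delta(y_k)\cap\R^n_+)}+\|D^2 u\|_{L^1(B_\delta(y_k)\cap\R^n_+)}\le C_\delta\,\|u\|_{L^1(B_{4\delta}^+(z_k))}\le C_\delta\,\|u\|_{L^1(A^+_{\rho_1,\rho_4})}.
\]

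Summing the two previous displays over the finite family $\{B_\delta(y_k)\}_k$, whose cardinality depends only on $n$ and the $\rho_i$'s, gives \eqref{prop5.2CFRS-ann} and \eqref{hess-ann}. No step presents a real obstacle; the only mild care is in arranging the covering so that every doubled ball remains inside $A^+_{\rho_1,\rho_4}$ (either entirely in the interior, or as a half-ball centered on the flat boundary), which is exactly what the choice $\delta=\frac14\min_i\{\rho_{i+1}-\rho_i\}$ guarantees.
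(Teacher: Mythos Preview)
Your approach is exactly the paper's: cover the half-annulus by interior balls and boundary half-balls, apply the already-proven interior estimates (Theorem~\ref{thm:0} and \eqref{estdivbis} combined with Theorem~\ref{thm:2}) on the former and the boundary estimates (Theorem~\ref{thm:2bdry} and \eqref{last-gamma}) on the latter, then sum. There are two small radius slips worth fixing: first, your dichotomy misses centers with $\delta\le (y_k)_n\le 2\delta$ (the paper handles this by using different radii for interior and boundary balls, as in the footnote to Step~3 of Lemma~\ref{lem:bdry-CalA}); second, from $y_k\in B_\delta(z_k)$ you only get $B_\delta(y_k)\cap\R^n_+\subset B_{2\delta}^+(z_k)$, not $B_\delta^+(z_k)$, so the rescaling factor for the boundary half-balls must be adjusted accordingly. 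Both are harmless once the radii are chosen as in the paper.
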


\begin{proof}
We first cover $\partial^0 A^+_{\rho_2,\rho_3}$ by a finite number of balls $B_j$ centered at $\{x_n=0\}$ and with a sufficiently small radius~$\delta_0$ such that the balls $4B_j$ (with same centers as the previous ones, and with radius $4\delta_0$) are contained in $A_{\rho_1,\rho_4}$. We now cover $\bar{A^+_{\rho_2,\rho_3}}\setminus \cup_j B_j$ by balls $B_k$ of  a smaller radius $\delta_1$ in such a way that $4 B_k\subset A^+_{\rho_1,\rho_4}$. In this way we obtain a covering of $A^+_{\rho_2,\rho_3}$ made of half-balls $B_j^+$ (centered at $\{x_n=0\}$) and interior balls $B_k$, with $4B_j^+$ and $4B_k$ all contained in $A^+_{\rho_1,\rho_4}$.

Now, to deduce our two estimates, in each half-ball $B_j^+$ we use  Proposition~\ref{prop:2bdry} and \eqref{last-gamma} (both rescaled). Instead, in the interior balls $B_k$ we apply \eqref{eq:W12 L1 int} as well as \eqref{estdivbis} (both rescaled). In this way we control the norms of the quantities on the left-hand sides of \eqref{prop5.2CFRS-ann} and of \eqref{hess-ann} but now integrated in any of the previous balls,  by $C_{\rho_i} \, \|u\|_{L^{1}(A^+_{\rho_1,\rho_4})}$ (for each of the balls). Adding all the inequalities completes the proof.
\end{proof}

\section{The radial derivative controls the function in $L^{1}$ up to the boundary}
\label{sect:boundary-L1radial}

In this section we establish Proposition \ref{prop:3bdry}. This will be much more delicate than the interior case of Proposition~\ref{prop:3}, which holds assuming only that the function~$u$ is superharmonic. Instead, in the boundary setting, superharmonicity is not enough for the proposition to hold, as the following remark shows. Hence, within the proof we will need to use the semilinear equation satisfied by $u$ and, in fact, also the stability of $u$.

\begin{remark}\label{rk:not-superh}
For $n\ge 2$, the estimate \eqref{introbdryradial} of Proposition \ref{prop:3bdry}, controlling the function in~$L^1$  in a half-annulus by its radial derivative in $L^1$ (or even in $L^\infty$), also in a half-annulus, cannot not hold within the class of nonnegative superharmonic functions which are smooth in $\overline{\R^n_+}$ and vanish on $\partial \R^n_+$. 

Indeed, for $\delta \in (0,1)$, consider
$$
u^\delta (x):= \frac{x_n}{|(x',x_n+\delta)|} \qquad\text{ for } x=(x',x_n)\in\R^{n-1}\times\R, \, x_n\geq 0.
$$
The function $u^\delta$ is nonnegative and smooth in $\overline{\R^n_+}$, vanishes on $\{x_n=0\}$, and, as a simple computation shows,\footnote{One may start from $\Delta u^\delta = x_n \Delta \varphi + 2 \varphi_{x_n}$, where $\varphi=(|x|^2+ 2\delta x_n+\delta^2)^{-1/2}$.} is superharmonic in~$\R^n_+$ for $n\geq 2$. At the same time, in the half-annulus it satisfies
$$
|\partial_r u^\delta| = \frac{x_n}{r\, |(x',x_n+\delta)|^3}\, \delta (x_n+\delta) \le \frac{1}{r\,r^{3} }\, \delta (1+\delta)
\le C\delta \quad\text{ in } A^+_{1/2,1}
$$ 
for some constant $C$ independent of $\delta$. By taking $\delta$~small enough, this shows that the estimate  \eqref{introbdryradial} of Proposition \ref{prop:3bdry} cannot hold within this class of functions. 

Note also that in the limiting case $\delta=0$, we are exhibiting a nonnegative  superharmonic function $u^0=x_n/r$ which belongs to $W^{1,2}(B_1^+)$ for $n\geq 3$, vanishes a.e.\ on $\{x_n=0\}$, and is zero homogeneous (i.e., $\partial_r u^0\equiv 0$). 
\end{remark}

To control the $L^1$ norm of a stable solution in a half-annulus by its radial derivative in $L^1$, the starting idea is to use the equation
\begin{equation}\label{continuousbdry}
-2 \Delta u + \Delta (x\cdot\nabla u)  = -f'(u) \, x\cdot\nabla u,
\end{equation}
after multiplying it against a cut-off function and integrating it in the half-annulus. We will see that this easily yields a lower bound for the integral of the left-hand side which is appropriate for our purposes. The difficulty is how to control the integral of the right-hand side, by above, in terms of only $x\cdot\nabla u= ru_r$. As we will explain later in Footnote~\ref{foot-xgrad}, we wish to use the stability condition to deal with the factor $f'(u)$ in \eqref{continuousbdry}. However, through the simplest approach this would force to control the $L^2$ norm of $\nabla (x\cdot\nabla u)$   ---which is not at our hands since we do not have $L^2$ control on the full Hessian of $u$.

To proceed in a similar manner but reducing the number of derivatives falling on~$u$, for $\lambda >0$ we consider the functions 
$$
u_\lambda(x):=u(\lambda x)
$$
and note that $\frac{d}{d\la} u_\la (x)= x \cdot \nabla u (\lambda x)= \lambda^{-1} x \cdot \nabla u_\lambda (x)$.  Using this twice, and noticing that $\lambda^{-2} \Delta u_\lambda = -f(u_\lambda)$, we deduce
\begin{equation}\label{expression}
\begin{split}
-2\lambda^{-3}\Delta u_{\lambda} + \lambda^{-2} \Delta ( \la^{-1} x\cdot \nabla u_{\lambda}) &
= \frac{d}{d\lambda} \left( \lambda^{-2} \Delta u_\lambda\right) \\
& = - \frac{d}{d\lambda} \, f( u_\lambda )= -f'(u_\lambda)\  \lambda^{-1} x\cdot\nabla u_\lambda.
\end{split}
\end{equation}
Evaluating the first and last expressions at $\la=1$, we recover \eqref{continuousbdry}. However, it will be crucial (see the later Footnote~\ref{foot-xgrad}) to use instead the equality between the first and third expressions, after integrating them not only in $x$, but also in~$\lambda$. Integrating in $\lambda$ will be essential to reduce the number of derivatives falling on~$u$. In addition, the monotonicity and convexity of $f$ will allow to use the stability condition appropriately.

This will be understood in all detail going through the following proof. 

\begin{proof}[Proof of Proposition \ref{prop:3bdry}]
We follow \cite{C22quant}. By rescaling, we may suppose that we have a stable solution in $B_6^+$ instead of $B_1^+$.

We choose a nonnegative smooth function $\zeta$ with compact support in the full annulus $A_{4,5}$ and such that $\zeta\equiv 1$ in $A_{4.1,4.9}$. Then, the function $\xi:= x_n \zeta$ satisfies
\begin{equation*}
\begin{split}
 &\xi\ge 0 \text{ in } A_{4,5}^+,  \quad \xi=0 \text{ on }\partial^0 A_{4,5}^+, \\
 & \xi=\xi_{\,\nu} = 0 \text{ on } \partial^+ A_{4,5}^+, \quad \text{and}\quad  \xi=x_n \text{ in } A_{4.1,4.9}^+. 
\end{split}
\end{equation*}

The proof starts from the identity
\begin{equation}\label{derla0-1}
\begin{split}
2\lambda^{-3}\int_{A_{4,5}^+} -\Delta u_{\lambda}\, \xi\, dx + \lambda ^{-2}\int_{A_{4,5}^+}  \Delta ( \la^{-1} x\cdot \nabla u_{\lambda}) \, \xi\, dx &\\
&\hspace{-3cm}= - \frac{d}{d\lambda}  \int_{A_{4,5}^+} f(u_\lambda) \, \xi\, dx,
\end{split}
\end{equation}
which follows from the equality between the first and third expressions in \eqref{expression}.
In a first step, we will bound the left-hand side of \eqref{derla0-1} by below. But the subtle part of the proof, where we use the stability of the solution, will be the second step. It will bound the right-hand side of \eqref{derla0-1} by above, but only after averaging it in~$\la$. For this, we will use that
\begin{equation}\label{derla0-2}
\int_1^{1.1} \big( - \frac{d}{d\lambda} \int_{A_{4,5}^+}    f( u_\lambda)\, \xi\, dx \big) d\lambda 
= \int_{A_{4,5}^+} \big( f(u)-f(u_{1.1}) \big) \xi\, dx ,
\end{equation}
where
$$
u_{1.1}:= u_{11/10}=u\big( (11/10) \cdot\big).
$$

\vspace{2mm}\noindent
{\it Step 1: We prove that}
\begin{equation}\label{bdrystep1}
\begin{split}
2\la^{-3}\int_{A_{4,5}^+} -\Delta u_{\la} \, \xi\, dx + \la ^{-2}\int_{A_{4,5}^+}  \Delta ( \la^{-1} x\cdot \nabla u_{\la}) \, \xi\, dx &\\
& \hspace{-3cm}\ge c \| u\|_{L^1(A_{4.7,4.8}^+)} - C \|u_r\|_{L^1({A_{3,6}^+})}
\end{split}
\end{equation}
{\it for every $\la\in [1,1.1]$, where $c$ and $C$ are positive dimensional constants.}
\vspace{2mm}

To bound by below the second integral in \eqref{bdrystep1} is easy. Since $\xi$ and $\xi_{\,\nu}$ vanish on $\partial^+A^+_{4,5}$, and both $\xi$ and $x\cdot \nabla u_{\la}$ vanish on $\partial^0 A^+_{4,5}$, we see that
\begin{equation}\label{2ndint}
\int_{A_{4,5}^+}  \Delta ( x\cdot \nabla u_{\la}) \, \xi\, dx =\int_{A_{4,5}^+}  x\cdot \nabla u_{\la}\, \Delta \xi\, dx \ge -C \|u_r\|_{L^1({A_{4,5.5}^+})}
\end{equation}
since $x\cdot \nabla u_{\la}(x)= \la x\cdot \nabla u(\la x)= \la r \, u_r(\la x)$.

Next, to control the first integral in \eqref{bdrystep1}, given any $\rho_1\in (4.1,4.2)$ and $\rho_2\in (4.8,4.9)$ we consider the solution $\varphi$ of 
\begin{equation}\label{torsion}
\begin{cases} -\Delta \varphi=1 \quad  \quad &\mbox{in } A_{\rho_1,\rho_2}^+
\\
\varphi=0 &\mbox{on  } \partial^0 A_{\rho_1,\rho_2}^+
\\
\varphi_\nu=0 &\mbox{on  } \partial^+ A_{\rho_1,\rho_2}^+.
\end{cases}
\end{equation}
Note that $\varphi\ge 0$ by the maximum principle. In addition, as shown in Appendix~\ref{app:neumann}, we have $|\nabla \varphi|\le C$ in $ A_{\rho_1,\rho_2}^+$ for some dimensional constant $C$. This bound yields $c\varphi\le x_n=\xi$ in $A_{\rho_1,\rho_2}^+$ for some small dimensional constant $c>0$. Since, in addition, $-\Delta u_{\la}$, $\xi$, and $\varphi$ are all nonnegative, it follows that (for positive constants $c$ and $C$ that may differ from line to line)
 \begin{eqnarray*}
\int_{A_{4,5}^+} -\Delta u_{\la} \, \xi\, dx &\geq& c\int_{A_{\rho_1,\rho_2}^+} -\Delta u_{\la}\, \varphi \, dx
\\ & &\hspace{-2cm} = -c \int_{\partial^+ A_{\rho_1,\rho_2}^+} (u_{\la})_\nu \,\varphi \, d\HH^{n-1} 
+c\int_{A_{\rho_1,\rho_2}^+} u_{\la}\, dx
\\ & &\hspace{-2cm} \geq - C \int_{\partial^+ B_{\rho_1}^+} |(u_{\la})_r| \, d\HH^{n-1}  -C \int_{\partial^+ B_{\rho_2}^+} |(u_{\la})_r| \, d\HH^{n-1} 
+c\| u\|_{L^1(A_{\la\rho_1,\la\rho_2}^+)}
\\ & &\hspace{-2cm} \geq - C \int_{\partial^+ B_{\rho_1}^+} |(u_{\la})_r| \, d\HH^{n-1}  -C \int_{\partial^+ B_{\rho_2}^+} |(u_{\la})_r| \, d\HH^{n-1} 
+c\| u\|_{L^1(A_{4.7,4.8}^+)}
\end{eqnarray*}
since $\la\rho_1\le 1.1\cdot 4.2\le 4.7$ and $4.8\le \la\rho_2$. Finally, integrating first in $\rho_1\in (4.1,4.2)$ and then in $\rho_2\in (4.8,4.9)$, we arrive at
\begin{equation*}
 \int_{A_{4,5}^+} -\Delta u_{\la} \, \xi\, dx 
\ge -C \|u_r\|_{L^1({A_{4.1,6}^+})}
+c\| u\|_{L^1(A_{4.7,4.8}^+)},
\end{equation*}
since $1.1\cdot 4.9\le 6$.

This and \eqref{2ndint} establish the claim of Step 1.

\vspace{2mm}\noindent
{\it Step 2: We prove that, for every $\ep\in (0,1)$,}
\begin{equation}\label{bdrystep2}
 \int_{A_{4,5}^+} \big( f(u)-f( u_{1.1}) \big) \xi\, dx\le C\big( \varepsilon  \|u\|_{L^{1}(A^+_{3,6})}+ \varepsilon^{-1-2\frac{2+\gamma}{\gamma}} \|u_r\|_{L^1({A_{3,6}^+})}\big)
\end{equation}
{\it for some dimensional constants $\gamma>0$ and $C$ ---with $\gamma$ being the exponent in Corollary~\ref{corol:Deltabdry}. }
\vspace{2mm}

By convexity of $f$ we have $f(u)-f( u_{1.1}) \leq f'(u)(u- u_{1.1})$. We now use that $\xi=0$ on $\partial A_{4,5}^+$ and that $u- u_{1.1}=0$ on $\partial^0 A_{3.9,5.1}^+$ in order to take advantage, twice, of the stability of $u$. Taking a function $\phi\in C^\infty_c(A_{3.9,5.1})$ with $\phi=1$ in $A_{4,5}$, and since $\xi$ and $f'(u)$ are nonnegative, we deduce
\begin{align}
 \hspace{0cm} \int_{A_{4,5}^+} \big( f(u)-f( u_{1.1}) \big) \xi\, dx & \leq  \int_{A_{4,5}^+} f'(u)(u-u_{1.1}) \xi\, dx \label{int-fsxi1}
\\ & \hspace{-3cm} 
\leq  \left( \int_{A_{4,5}^+} f'(u)\xi^2\, dx \right)^{1/2}\left( \int_{A_{4,5}^+} f'(u) (u- u_{1.1})^2\, dx \right)^{1/2} \nonumber
\\ & \hspace{-3cm} 
\leq  \left( \int_{A_{4,5}^+} |\nabla \xi|^2\, dx \right)^{1/2} \left( \int_{A_{3.9,5.1}^+} f'(u) \big((u-
 u_{1.1})\phi\big)^2\, dx \right)^{1/2}
\nonumber\\ & \hspace{-3cm} 
\leq  C\left( \int_{A_{3.9,5.1}^+} \left|\nabla \big( (u- u_{1.1})\phi\big)\right|^2\, dx \right)^{1/2}
\nonumber\\ & \hspace{-3cm} 
\leq C\Vert \nabla(u- u_{1.1}) \Vert_{L^2(A_{3.9,5.1}^+)}\label{int-fsxi3},
 \end{align}
where in the last bound we have used Poincar\'e's inequality in $A_{3.9,5.1}^+$ for functions vanishing on $\partial^0 A_{3.9,5.1}^+$ (see Footnote~\ref{poincare-0} for its simple proof). The previous chain of inequalities, which are a crucial part of the proof, have used the stability of the solution twice.\footnote{\label{foot-xgrad}Note first that a simple Cauchy-Schwarz argument to control the right-hand side of \eqref{int-fsxi1} would not work, since we do not have control on the integral of $f'(u)^2$. On the other hand, we would be in trouble if we had performed the above chain of inequalities starting from \eqref{continuousbdry} instead of \eqref{expression}. Indeed, in such case, $f'(u)(u- u_{1.1}) \xi $  in \eqref{int-fsxi1}   would be replaced by $f'(u)\,x\cdot\nabla u\,\xi$. Hence, proceeding as we have done above, the final quantity appearing in \eqref{int-fsxi3} would be the $L^2$ norm of $\nabla (x\cdot\nabla u)$. But recall that we do not have control on the $L^2$ norm of the full Hessian of $u$.}
 
Next, by the $W^{1,2+\gamma}$ estimate \eqref{prop5.2CFRS-ann} (rescaled to hold in $B_6^+$ instead of $B_1^+$), taking $q:=\frac{2(1+\gamma)}{2+\gamma}$, and applying H\"older's inequality to $\int (u-u_{1.1})^{(2+\gamma)/(1+\gamma)}(u-u_{1.1})^{\gamma/(1+\gamma)}$ with exponents $1+\gamma$ and $(1+\gamma)/\gamma$, we obtain 
\begin{align} \label{int-fsxibis}
\|\nabla (u- u_{1.1})\|_{L^{2}(A^+_{3.9,5.1})}& \le \|\nabla (u- u_{1.1})\|_{L^{2+\gamma}(A^+_{3.9,5.1})}^{\frac{1}{q}}\|\nabla (u- u_{1.1})\|_{L^{1}(A^+_{3.9,5.1})}^{\frac{1}{q'}}\nonumber\\
&\hspace{-1.5cm} \le C \|\nabla u\|_{L^{2+\gamma}(A^+_{3.9,5.1\cdot 1.1})}^{\frac{1}{q}}\|\nabla(u- u_{1.1})\|_{L^{1}(A^+_{3.9,5.1})}^{\frac{1}{q'}}  \nonumber \\
&\hspace{-1.5cm} \le C   \|u\|_{L^{1}(A^+_{3,6})}^{\frac{1}{q}}  \|\nabla(u- u_{1.1})\|_{L^{1}(A^+_{3.9,5.1})}^{\frac{1}{q'}}\nonumber\\
&\hspace{-1.5cm} \le   \ep \|u\|_{L^{1}(A^+_{3,6})}
+ C\ep^{-\frac{q'}{q}} \|\nabla(u- u_{1.1})\|_{L^{1}(A^+_{3.9,5.1})} 
 \end{align}
for all $\ep\in (0,1)$.
Now, by the interpolation inequality in cubes of Proposition \ref{prop5.2} applied with $p=1$, we claim that
\begin{align}\label{frominterp}
\|{\nabla (u- u_{1.1})}\|_{L^1({A^+_{3.9,5.1}})}&\nonumber\\ 
&\hspace{-2.5cm} \leq C\varepsilon^{1+\frac{q'}{q}} \|{D^2(u- u_{1.1})}\|_{L^1({A^+_{3.8,5.2}})}+ C \varepsilon^{-1-\frac{q'}{q}}\|u- u_{1.1}\|_{L^1({A^+_{3.8,5.2}})}\nonumber\\ 
&\hspace{-2.5cm} \leq C\varepsilon^{1+\frac{q'}{q}} \|{D^2u}\|_{L^1({A^+_{3.8,5.8}})}+ C \varepsilon^{-1-\frac{q'}{q}}\|u- u_{1.1}\|_{L^1({A^+_{3.8,5.2}})}.
\end{align}
To see the first inequality, one covers the half-annulus $A^+_{3.9,5.1}$ (except for a set of measure zero) by disjoint cubes of sufficiently small side-length to be contained in $A^+_{3.8,5.2}$. One then applies Proposition \ref{prop5.2} with $p=1$ in each of these cubes, after rescaling it and renaming~$\ep$, and finally one adds up all the inequalities ---exactly as we did in the beginning of the proof of Proposition \ref{prop:2} in Section~\ref{sect:W12}.

From \eqref{frominterp} and  the Hessian bound \eqref{hess-ann} (rescaled to hold in $B_6^+$ instead of~$B_1^+$) we conclude
\begin{equation*}
\|{\nabla (u- u_{1.1})}\|_{L^1({A^+_{3.9,5.1}})}\leq  \hspace{-.8mm}C\varepsilon^{1+\frac{q'}{q}} \|u\|_{L^{1}(A^+_{3,6})}+ C\varepsilon^{-1-\frac{q'}{q}} \|u- u_{1.1}\|_{L^1({A^+_{3.8,5.2}})}.
\end{equation*}

Putting together this last bound with \eqref{int-fsxi3} and \eqref{int-fsxibis},  and using again \eqref{hess-ann}, we arrive at 
\begin{equation*}
 \int_{A_{4,5}^+} \big( f(u)-f( u_{1.1}) \big) \xi\, dx\le C\varepsilon  \|u\|_{L^{1}(A^+_{3,6})}+ C\varepsilon^{-1-2\frac{2+\gamma}{\gamma}}\|u- u_{1.1}\|_{L^1({A^+_{3.8,5.2}})}.
\end{equation*}
At the same time, since $\frac{d}{d\lambda} u_\lambda (x)=x\cdot\nabla u (\lambda x)= r\, u_r(\lambda x)$, we have
\begin{eqnarray*}
\|u- u_{1.1}\|_{L^1({A_{3.8,5.2}^+})} &=& \int_{A_{3.8,5.2}^+} dx \left| \int_1^{1.1} d\lambda \ r\, u_r(\lambda x)\right| \\
& \le & C\int_1^{1.1}  d\lambda \int_{A_{3.8,5.2\cdot 1.1}^+} dy \ |u_r(y)| \le C\|u_r\|_{L^1({A_{3,6}^+})}.
\end{eqnarray*}
The last two bounds establish Step 2.

\vspace{2mm}\noindent
{\it Step 3: Conclusion.}
\vspace{2mm}

We integrate \eqref{derla0-1} in $\la\in (1,1.1)$ and use \eqref{derla0-2}, as well as \eqref{bdrystep1} and \eqref{bdrystep2} in the statements of Steps 1 and 2.
We obtain the bound
\begin{equation}\label{finalbdry}
\| u\|_{L^1(A_{4.7,4.8}^+)} \le C\big( \varepsilon  \|u\|_{L^{1}(A^+_{3,6})}+ \varepsilon^{-1-2\frac{2+\gamma}{\gamma}}\|u_r\|_{L^1({A_{3,6}^+})} \big)
\end{equation}
for all $\ep\in (0,1)$. It is now simple to conclude the desired estimate
\begin{equation}\label{finalB6}
\| u\|_{L^1(A_{3,6}^+)} \le C \|u_r\|_{L^1({A_{3,6}^+})}.
\end{equation}

Indeed, we first show that
\begin{equation}\label{annulusB3}
\| u\|_{L^1(A_{3,6}^+)} \le C  \big( \|u\|_{L^{1}(A^+_{4.7,4.8})}+  \|u_r\|_{L^1({A_{3,6}^+})} \big).
\end{equation}
To prove this, use that 
\begin{equation}\label{alongr}
u(s\sigma)=u(t\sigma)-\int_s^t u_r (r\sigma) \,dr
\end{equation}
for $s\in (3,6)$, $t\in (4.7,4.8)$, and $\sigma\in S^{n-1}$. We deduce
$$
(s/6)^{n-1}|u(s\sigma)|\leq (t/4)^{n-1} |u(t\sigma)|+\int_{3}^6 (r/3)^{n-1} |u_r (r\sigma)| \,dr.
$$
Integrating in $\sigma\in S^{n-1}$, and then in $s\in (3,6)$ and in $t\in (4.7,4.8)$, we conclude ~\eqref{annulusB3}.

Now, we use  \eqref{annulusB3} to bound the right-hand side of  \eqref{finalbdry}. In the resulting inequality we choose $\ep\in (0,1)$ small enough so that the constant $C\ep$ multiplying $ \|u\|_{L^{1}(A^+_{4.7,4.8})}$ satisfies $C\ep\le1/2$. We deduce
$$
\| u\|_{L^1(A_{4.7,4.8}^+)} \le C \|u_r\|_{L^1({A_{3,6}^+})},
$$
that, together with \eqref{annulusB3}, yields \eqref{finalB6}. 

Next, to show $\| u\|_{L^1(B_{6}^+)} \le C \|u_r\|_{L^1({B_{6}^+})}$, by \eqref{finalB6} it suffices to show $\| u\|_{L^1(B_{3}^+)} \le C \|u_r\|_{L^1({B_{6}^+})}$. Using again \eqref{finalB6}, it is indeed enough to show 
$$
\| u\|_{L^1(B_{3}^+)} \le C \left(\| u\|_{L^1(A_{3,6}^+)} + \|u_r\|_{L^1({B_{6}^+})}\right).
$$
This is easily shown using \eqref{alongr}, with a similar argument as above or as in the interior case of Proposition~\ref{prop:3}.

The proven bounds hold for a solution in $B_6^+$. By rescaling we conclude the estimates of the proposition for solutions in $B_1^+$.
\end{proof}

For $n\geq 3$, the estimate that we have just proven can be improved as follows.

\begin{remark}\label{rk:bdry-ngeq3}
When $n\geq3$, Proposition \ref{prop:3bdry} also holds when replacing $\| u\|_{L^1(A_{1/2,1}^+)}$, in the left-hand side of~\eqref{introbdryradial}, by $\| u\|_{L^1(B_{1}^+)}$. 

To show this, and given the proposition, it suffices to prove  
$$
\| u\|_{L^1(B_{1/2}^+)} \leq C \| u\|_{L^1(A_{1/2,1}^+)}.
$$ 
But notice that, by the Poincar\' e inequality for functions vanishing on $\partial^0 B^+_{1/2}$ (see Footnote~\ref{poincare-0} for its simple proof), we have that 
$$
\| u\|_{L^1(B_{1/2}^+)}\le C\|\nabla u\|_{L^{2}(B^+_{1/2})}\le C\|\nabla u\|_{L^{2}(B^+_{4/6})}.
$$
Now, using \eqref{bdry-notweighted} (here we need that $n\geq 3$) we can bound this last quantity by $C\|\nabla u\|_{L^{2}(A^+_{4/6,5/6})}$. Finally, by  \eqref{prop5.2CFRS-ann} we know that
$$
\|\nabla u\|_{L^{2}(A^+_{4/6,5/6})}
\le C   \|u\|_{L^{1}(A^+_{1/2,1})}.
$$ 
\end{remark}

\section{Boundary $C^\alpha$ estimate \vspace{.15cm}}
\label{sect:boundary-conclusion}

We have now all the ingredients to give the

\begin{proof}[Proof of Theorem \ref{thm:0bdry}]
The proof of \eqref{higher-bdry} follows from Proposition~\ref{prop:2bdry} and a standard covering and scaling argument.

To prove the H\"older estimate \eqref{holder-bdry},
we may assume (as in the interior case; see the beginning of the proof of Theorem~\ref{thm:0} in Section~\ref{sect:Holder})  that $3\le n\le 9$ by adding superfluous variables. Note that the ``half-ball geometry'' in the statement of the theorem allows, when adding variables, to deduce the result for $n=1$ and $n=2$ from the one for $n=3$; here we may use ``half-cubes'' contained (respectively, containing) each of the two half-balls together with a scaling and covering argument.

Using Proposition~\ref{prop:1bdry} with $\rho=3/5$ and $\lambda=4/3$, we obtain
\begin{equation}\label{eq:11proofbdry}
\int_{B^+_{1/2}}r^{2-n} u_{r}^2\,dx \leq \int_{B^+_{3/5}}r^{2-n} u_{r}^2\,dx 
\leq C\int_{A^+_{3/5,4/5}}|\nabla u|^2\,dx.
\end{equation}
Now, since $1/2<3/5<4/5<1$, \eqref{prop5.2CFRS-ann} in Corollary~\ref{corol:Deltabdry} yields
\begin{equation}\label{eq:12proofbdry}
\int_{A^+_{3/5,4/5}}|\nabla u|^2\,dx\leq  C\Vert u\Vert_{L^1(A^+_{1/2,1})}^2.
\end{equation}
At the same time, by the estimate \eqref{introbdryradial} from Proposition \ref{prop:3bdry} we have
\begin{equation*}
\Vert u\Vert_{L^1(A^+_{1/2,1})}^2\le C\Vert u_r\Vert_{L^1(A^+_{1/2,1})}^2 \le 
 C\Vert u_r\Vert_{L^2(A^+_{1/2,1})}^2\le
C\int_{A^+_{1/2,1}} r^{2-n} u_r^2\,dx.
\end{equation*}

This, together with \eqref{eq:11proofbdry} and \eqref{eq:12proofbdry}, leads to
$$
\int_{B^+_{1/2}}r^{2-n} u_{r}^2\,dx \leq C\int_{A^+_{1/2,1}} r^{2-n} u_r^2\,dx.
$$
We can write this inequality, in an equivalent way, as
$$
\int_{B^+_{1/2}}r^{2-n} u_{r}^2\,dx \leq  \theta \int_{B^+_{1}} r^{2-n} u_r^2\,dx
$$ 
for the dimensional constant $\theta= \frac{C}{1+C}\in (0,1)$.

This estimate (which is rescale invariant), applied to the rescaled stable solutions $u(\rho\, \cdot)$, yields
$$
\int_{B^+_{\rho/2}}r^{2-n} u_{r}^2\,dx \leq \theta\int_{B^+_{\rho}} r^{2-n} u_r^2\,dx
$$ 
for all $\rho \in (0,1)$.
This inequality can be iterated, in balls of radius $2^{-j}$, with $j\ge 1$ an integer, to obtain
$\int_{B^+_{2^{-j}}}r^{2-n} u_{r}^2\,dx \leq \theta^{j-1} \int_{B^+_{1/2}}r^{2-n} u_{r}^2\,dx$.
Since $0<\theta<1$, it follows that, for some dimensional $\alpha\in (0,1)$,
$$
\int_{B^+_\rho} r^{2-n}u_r^2\,dx\leq C\rho^{2\alpha}\int_{B^+_{1/2}} r^{2-n} u_r^2\,dx \le C\rho^{2\alpha}\Vert u\Vert_{L^1(B^+_{1})}^2 \qquad\text{ for all } \rho \leq 1/2,
$$
where we have used \eqref{eq:11proofbdry} and \eqref{eq:12proofbdry}
in the last inequality. In particular, since $\rho^{2-n}\leq r^{2-n}$ in $B^+_\rho$, we conclude that
$$
\int_{B_\rho^+} |u_r|\,dx  \le C\rho^{n-1+\alpha}\Vert u\Vert_{L^1(B^+_{1})}
\qquad\text{ for all } \rho \leq 1/2.
$$

Next, given $y'\in \partial^0 B^+_{1/2}$, we can apply the last estimate (with $\rho$ replaced by $2\rho$) to the function $u_{y'}(x):=u(y'+\frac{x}{2})$, defined for $x\in B_{1}^+$, since $y'+\frac{1}{2}B_{1}^+\subset B_{1}^+$. Using the notation
$u_{r_{y'}}(x):= |x-y'|^{-1}(x-y')\cdot \nabla u (x)$, we get
\begin{equation*}
\int_{B^+_\rho(y')}|u_{r_{y'}}|\,dx\leq C\Vert u\Vert_{L^1(B_1^+)}\,\rho^{n-1+\alpha}\quad \mbox{ for all }\,y' \in \partial^0 B^+_{1/2}\text{ and } \rho \leq 1/4.
\end{equation*}
At this point we can reflect $u$ oddly to get a $C^1$ function in all $B_1$, which we still call $u$ and which satisfies the previous estimate but now with the integral of $|u_{r_{y'}}|$ extended to the whole ball $B_\rho(y')$. Thus, given $0<y_n\leq 1/8$, we can apply  Lemma~\ref{lemmaC1}  in $B_{2y_n}(y')$ (that is, with $y$ replaced by $y'$ and $d=2y_n$) and with $S:=B^+_{2y_n}(y')$ to get (recall that $u\geq 0$)
\begin{equation}\label{bdrycontrolaver}
\frac{1}{(y_n)^n}\Vert u\Vert_{L^1({B^+_{2y_n}(y')})}= C|u_S|=C|u(y')-u_S|\leq C  \Vert u\Vert_{L^1(B_1^+)}\,(y_n)^{\alpha}.
\end{equation}

We are now ready to control the H\"older norm of $u$ up to the boundary. Given $y=(y',y_n)\in B^+_{1/8}$,  by the interior H\"older estimate \eqref{eq:Ca L1 int} of Theorem~\ref{thm:0} (applied in the interior ball $B_{y_n}(y)$ and properly rescaled) and by \eqref{bdrycontrolaver}, we have 
\begin{equation}\label{bdry-interior}
\begin{split}
\Vert u\Vert_{L^\infty(B_{y_n/2}(y))}+ (y_n)^\alpha  [ u]_{C^\alpha(\overline{B}_{y_n/2}(y))}
&\leq \frac{C}{(y_n)^n}  \Vert u\Vert_{L^1(B_{y_n}(y))}\\
& \hspace{-1cm} \leq \frac{C}{(y_n)^n}   \Vert u\Vert_{L^1(B_{2 y_n}^+(y'))}\leq C  \Vert u\Vert_{L^1(B_1^+)}(y_n)^{\alpha}.
\end{split}
\end{equation}
In particular, $|u(y)|\leq C  \Vert u\Vert_{L^1(B_1^+)}$ and thus we have controlled the $L^\infty$ norm of $u$ in~$B^+_{1/8}$. 

Finally, to bound $|u(z)-u(y)|$ for another given point $z\in B^+_{1/8}$, note that, by symmetry, we may assume $z_n\leq y_n$. Now, if $|z-y|\leq y_n/2$ then from \eqref{bdry-interior} we get $|u(z)-u(y)|/|z-y|^\alpha \leq C  \Vert u\Vert_{L^1(B_1^+)}$. Instead, if $|z-y|>y_n/2$ we apply the $L^\infty$ estimate in \eqref{bdry-interior} (also with the point $y$ replaced by $z$) to deduce
\begin{equation*}
\begin{split}
|u(z)-u(y)|
&\leq u(z)+u(y) \leq   C  \Vert u\Vert_{L^1(B_1^+)} \left( (z_n)^{\alpha} + (y_n)^{\alpha}\right) \\
&\leq  C  \Vert u\Vert_{L^1(B_1^+)} (y_n)^{\alpha} \leq C  \Vert u\Vert_{L^1(B_1^+)} |z-y|^{\alpha}.
\end{split}
\end{equation*}

We have thus controlled the $C^\alpha$-seminorm of $u$ in $B^+_{1/8}$ for every  stable solution in $B_1^+$. From this, the H\"older estimate \eqref{holder-bdry} of Theorem \ref{thm:0bdry} follows by a standard covering and scaling argument.
\end{proof}

\appendix

\medskip\bigskip
\centerline{\textsc{ \large Appendices}}
\addtocontents{toc}{\textsc{\hspace{.2cm}  Appendices \vspace{.1cm}}}

\section{Two simple interpolation inequalities in cubes}
\label{app:interp}

In contrast with \cite{CFRS}, in this paper we have not used the $W^{1,2+\gamma}$ bound to prove the interior $W^{1,2}$ regularity result. This has been accomplished thanks to the following interpolation inequality in cubes, which has been used also in our boundary regularity proof. Thus, we have used the interpolation inequality twice ---with $p=2$ in the interior case and with $p=1$ in the boundary case. 

The inequality concerns the quantity $\int |\nabla u|^{p-1}|D^2 u|\, dx$. It is stated in cubes, it does not assume any boundary values for the function~$u$, and it is extremely simple to be established. Indeed, in contrast with other interpolation results for functions with no prescribed boundary values, such as \cite[Theorem~7.28]{GT}, proving our inequality in a cube of~$\R^n$ will be immediate once we prove it in dimension one.\footnote{Instead, the interpolation inequality \cite[Theorem~7.28]{GT} for functions with arbitrary boundary values requires a result on Sobolev seminorms for extension operators (i.e., operators extending functions of~$n$~variables to a larger domain).}

We conceived the inequality recently, in our work \cite[Proposition A.3]{CMS}, in order to extend the results of \cite{CFRS} to the $(p+1)$-Laplacian operator.\footnote{The proof that we give here in cubes is exactly the same as that of the last arXiv version of~\cite[arXiv version]{CMS}. Instead, \cite[printed version]{CMS} claimed the same interpolation inequality in balls instead of cubes, and this made the proof (and perhaps the statement) in~\cite[printed version]{CMS} not to be correct. Indeed, proving inequality (A.5) of \cite[printed version]{CMS} contains a mistake: it fails when, given $\varepsilon \in (0,1)$, one has $|I|<<\ep$ ---as it occurs later in the proof in $\R^n$ when some 1d sections of the ball are very small compared to $\ep$. Despite this, all other results of~\cite[printed version]{CMS} remain correct since, within Step 1 of the proof of Theorem~1.1 in \cite[printed version]{CMS},  the interpolation inequality may be applied in small enough cubes covering the ball $B_{1/2}$ (as we do in the current paper) instead of applying it in the whole ball.} We do not know if the interpolation inequality has appeared somewhere before.

\begin{proposition}{\rm (\cite[Proposition A.2]{C22quant})}\label{prop5.2}
Let $Q=(0,1)^n\subset \R^n$, $p\geq 1$, and $u\in C^{2}(\overline Q)$.

Then, for every $\varepsilon\in (0,1)$,
\begin{equation}\label{5.2}
\int_{Q}\abs{\nabla u}^{p}dx \leq C_p\left( \varepsilon \int_{Q}\abs{\nabla u}^{p-1}\lvert D^2u\rvert\,dx+   \frac{1}{\ep^p} \int_{Q}\abs{u}^{p}dx\right),
\end{equation}
where $C_p$ is a constant depending only on $n$ and $p$.
\end{proposition}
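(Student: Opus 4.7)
The plan is to reduce to one dimension and then apply a 1D inequality along lines parallel to each coordinate axis via Fubini.

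The first step is to establish the following 1D inequality: for any closed interval $I\subset\R$ of length $\ell$ and every $v\in C^{2}(\overline I)$,
\begin{equation*}
\int_I |v'|^p\,dx \;\le\; C_p\, \ell \int_I |v'|^{p-1}|v''|\,dx \;+\; C_p\, \ell^{-p}\int_I |v|^p\,dx. \qquad(\ast)
\end{equation*}
To prove $(\ast)$, I first note that by continuity of $v$ there exist a point $\xi_1$ in the first third of $I$ and a point $\xi_2$ in the last third such that $|v(\xi_j)|^p\le (3/\ell)\int_I |v|^p\,dx$ (any point where $|v|^p$ is below its average on the corresponding third works). By the classical mean value theorem there is $\eta\in(\xi_1,\xi_2)$ with $v'(\eta)=(v(\xi_2)-v(\xi_1))/(\xi_2-\xi_1)$; since $\xi_2-\xi_1\ge\ell/3$, this gives
\begin{equation*}
|v'(\eta)|^p \;\le\; C\, \ell^{-p-1}\int_I |v|^p\,dx.
\end{equation*}
Combining this with the a.e.\ pointwise inequality $\bigl|\tfrac{d}{dx}|v'|^p\bigr|\le p\,|v'|^{p-1}|v''|$ (valid since $t\mapsto|t|^p$ is $C^{1}$ for $p>1$ and Lipschitz for $p=1$) together with the fundamental theorem of calculus, I get for every $x\in I$
\begin{equation*}
|v'(x)|^p \;\le\; |v'(\eta)|^p + p\int_I |v'|^{p-1}|v''|\,dx.
\end{equation*}
Integrating in $x$ over $I$ and using the bound on $|v'(\eta)|^p$ yields $(\ast)$.

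The second step is a covering argument: given $\varepsilon\in(0,1)$, I partition $(0,1)$ into $N=\lceil 1/\varepsilon\rceil$ disjoint intervals of common length $\ell=1/N\in[\varepsilon/2,\varepsilon]$, apply $(\ast)$ on each, and sum. Since $\ell\le\varepsilon$ and $\ell^{-p}\le 2^{p}\varepsilon^{-p}$, this produces the global 1D estimate
\begin{equation*}
\int_0^1 |v'|^p\,dx \;\le\; C_p\, \varepsilon\int_0^1 |v'|^{p-1}|v''|\,dx + C_p\, \varepsilon^{-p}\int_0^1 |v|^p\,dx.
\end{equation*}

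Finally, the $n$-dimensional statement \eqref{5.2} follows by applying this 1D inequality to the one-variable function $t\mapsto u(x_1,\dots,x_{i-1},t,x_{i+1},\dots,x_n)$ on $(0,1)$, integrating in the remaining variables by Fubini, using the pointwise bounds $|\partial_i u|\le|\nabla u|$ and $|\partial_{ii}^{2}u|\le |D^2 u|$, and then summing over $i=1,\dots,n$. The equivalence of $\ell^{2}$ and $\ell^p$ norms on $\R^n$ yields $|\nabla u|^p\le C_{n,p}\sum_i|\partial_i u|^p$, closing the argument. The main point to get right is the selection of the good point $\eta$: it is what produces the correct scaling $\ell^{-p-1}$ (hence $\varepsilon^{-p}$ after summing over the covering), and everything else is a routine assembly via Fubini and covering.
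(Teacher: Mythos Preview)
Your proof is correct and follows essentially the same route as the paper's: a 1D inequality obtained by locating a point where $|v'|$ is small (the paper uses the minimum of $|v'|$ while you use the mean value theorem between two sub-average points of $|v|^p$, a cosmetic difference), propagation via the fundamental theorem of calculus, a covering of $(0,1)$ by intervals of length $\sim\varepsilon$, and then Fubini in each coordinate direction. The assembly and scaling are handled correctly.
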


\begin{proof}
We first prove it for $n=1$, in the interval $(0,\delta)$ for a given $\delta\in (0,1)$. Let $u\in C^2([0,\delta])$. Let $x_0\in[0,\delta]$ be such that $\abs{u'(x_0)}= \min_{[0,\delta]}\abs{u'}$.
For $0<y<\frac\delta3<\frac{2\delta}3<z<\delta$, since $(u(z)-u(y))/(z-y)$ is equal to $u'$ at some point, we deduce that $\abs{u'(x_0)}\leq 3\delta^{-1}(|u(y)|+|u(z)|)$. 
Integrating this inequality first in $y$ and later in $z$, we see that
$\abs{u'(x_0)}\leq 9\delta^{-2}\int_{0}^{\delta}\abs{u}\,dx.$
Raising this inequality to the power $p\in [1,\infty)$ we get
\begin{equation}\label{5.2_1}
\abs{u'(x_0)}^{p}\leq 9^{p} \delta^{-p-1}\int_{0}^{\delta}\abs{u}^{p}\,dx.
\end{equation}
Now, for $x\in(0,\delta)$, integrating $\left(\abs{u'}^{p}\right)'$ in the interval with end points $x_0$ and $x$, we deduce
\[
\abs{u'(x)}^{p} \leq p\int_{0}^\delta\abs{u'}^{p-1}\abs{u''}\,dx+\abs{u'(x_0)}^{p}.
\] 
Combining this inequality with \eqref{5.2_1} and integrating in $x\in(0,\delta)$, we obtain
\begin{equation}\label{5.2claim2}
\int_{0}^{\delta}\abs{u'}^{p}\,dx \leq p\,\delta\int_{0}^\delta\abs{u'}^{p-1}\abs{u''}\,dx+9^{p}\delta^{-p}\int_{0}^{\delta}\abs{u}^{p}\,dx.
\end{equation}

We come back now to the statement of the proposition in dimension one and let $u\in C^2([0,1])$. For any given integer $k>1$ we divide $(0,1)$ into $k$ disjoint intervals of length $\delta=1/k$. Since for \eqref{5.2claim2} we did not require any specific boundary values for $u$, we can use the inequality in each of these intervals of length $\delta=1/k$ (instead of in $(0,\delta)$), and then add up all the inequalities, to deduce
\begin{equation}\label{5.2claim3}
\int_{0}^1\abs{u'}^{p}\,dx \leq  \frac{p}{k}\int_{0}^1\abs{u'}^{p-1}\abs{u''}\,dx+(9k)^{p}\int_{0}^1\abs{u}^{p}\,dx.
\end{equation}
Since $0<\ep<1$, there exists an integer $k>1$ such that $\frac{1}{\ep}\leq k<\frac{2}{\ep}$. This and \eqref{5.2claim3} establish the proposition in dimension one.

Finally, for $u\in C^{2}([0,1]^n)$,  denote $x=(x_1,x')\in\R\times\R^{n-1}$. Using \eqref{5.2} with $n=1$ for every $x'$, we get
\begin{equation*}
\begin{split}
\int_{Q}\abs{u_{x_1}}^{p}\,dx 
&=\int_{(0,1)^{n-1}}\,dx'\int_0^1\,dx_1\abs{u_{x_1}(x)}^{p}
\\ 
&\leq C_p\, \varepsilon\int_{(0,1)^{n-1}}\,dx'\int_0^1\,dx_1\abs{u_{x_1}(x)}^{p-1}\abs{u_{x_1x_1}(x)} \\
&\hspace{2cm} +C_p\, \varepsilon^{-p}\int_{(0,1)^{n-1}}\,dx'\int_0^1\,dx_1\abs{u(x)}^{p}
\\
&= C_p\left(\varepsilon\int_{Q}\abs{u_{x_1}(x)}^{p-1}\abs{u_{x_1x_1}(x)}\,dx+ \varepsilon^{-p}\int_{Q}\abs{u(x)}^{p}\,dx\right).
\end{split}
\end{equation*}
Since the same inequality holds for the partial derivatives with respect to each variable $x_k$ instead of $x_1$, adding up the inequalities we conclude \eqref{5.2}. 
\end{proof}

We also use Nash's interpolation inequality, which will allow us to replace the $L^2$ norm of $u$ by the square of its $L^1$ norm in \eqref{5.2}, when $p=2$. We provide an elementary proof of it.

\begin{proposition}\label{Nash}
Let $Q=(0,1)^n\subset \R^n$, $p\ge 1$,  and $u\in C^{2}(\overline Q)$.

Then, for every $\tilde\ep\in (0,1)$,
\begin{equation}\label{5.2bis}
\int_{Q}|u|^{p}\, dx \leq C_p\left( \tilde\ep^{\;p} \int_{Q}\abs{\nabla u}^{p}dx+  \frac{1}{{\tilde\ep}^{\; n(p-1)}}\left( \int_{Q}\abs{u}dx\right)^p\, \right),
\end{equation}
where $C_p$ is a constant depending only on $n$ and $p$.
\end{proposition}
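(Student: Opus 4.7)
The plan is to follow the same three-step structure used in the proof of Proposition~\ref{prop5.2}: a one-dimensional base case, reduction from the unit cube to that 1D estimate by induction on the dimension, and finally introducing the parameter $\tilde\ep$ by partitioning the cube into subcubes.

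First I would establish the 1D version: for $u$ absolutely continuous on $[0,\delta]$,
\[\int_0^\delta |u|^p\,dx \leq C_p\Bigl(\delta^p\int_0^\delta |u'|^p\,dx + \delta^{1-p}\Bigl(\int_0^\delta |u|\,dx\Bigr)^p\Bigr).\]
This follows from the elementary bound $|u(x)|\leq |u(y)|+\int_0^\delta |u'|$, averaged over $y\in(0,\delta)$ to give $|u(x)| \leq \delta^{-1}\int_0^\delta |u| + \int_0^\delta |u'|$; raising to the $p$-th power via $(a+b)^p\leq 2^{p-1}(a^p+b^p)$, bounding $(\int_0^\delta|u'|)^p\leq \delta^{p-1}\int_0^\delta|u'|^p$ by H\"older, and integrating in $x\in(0,\delta)$ yields the claim.

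Next I would prove, by induction on~$n$, the unit-cube version $\int_Q |u|^p \leq C_p\bigl(\int_Q|\nabla u|^p + (\int_Q|u|)^p\bigr)$ on $Q=(0,1)^n$. For the inductive step, apply the 1D case with $\delta=1$ to $u(x',\cdot)$ as a function of $x_n$ and integrate in $x'\in (0,1)^{n-1}$ to obtain
\[\int_Q |u|^p\,dx \leq C_p\int_Q |u_{x_n}|^p\,dx + C_p\int_{Q'} h(x')^p\,dx', \qquad h(x'):=\int_0^1|u(x',x_n)|\,dx_n.\]
The function~$h$ is Lipschitz on $Q'=(0,1)^{n-1}$, with $\int_{Q'}h = \int_Q|u|$ and $|\partial_{x_i}h|\leq \int_0^1|u_{x_i}|\,dx_n$ for $i<n$; Jensen's inequality on the probability space $(0,1)$ then gives $|\partial_{x_i}h|^p \leq \int_0^1|u_{x_i}|^p\,dx_n$, so that $\int_{Q'}|\nabla' h|^p \leq C_n\int_Q|\nabla u|^p$. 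Applying the $(n-1)$-dimensional hypothesis to~$h$ closes the induction.

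Finally, given $\tilde\ep\in(0,1)$, I would pick the integer~$k$ with $1/k\leq \tilde\ep<2/k$ and partition $Q$ into $k^n$ disjoint subcubes $Q_j$ of side $\delta=1/k$. Rescaling the unit-cube inequality to each $Q_j$ and summing, the gradient term yields $C_p\delta^p\int_Q|\nabla u|^p$ directly, while for the $L^1$ term I would invoke the elementary subadditivity $\sum_j a_j^p\leq (\sum_j a_j)^p$, valid for $a_j\geq 0$ and $p\geq 1$ (a consequence of $(a+b)^p\geq a^p+b^p$, iterated), obtaining $\sum_j\bigl(\int_{Q_j}|u|\bigr)^p \leq (\int_Q|u|)^p$. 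Since $\delta\leq \tilde\ep<2\delta$, this proves~\eqref{5.2bis}. The main obstacle I foresee is the minor technicality that the inductive step invokes the $(n-1)$-dimensional inequality for the merely Lipschitz auxiliary function~$h$ rather than a $C^2$ function; this is harmless because the argument of Step~1 relies only on the fundamental theorem of calculus and therefore extends verbatim to absolutely continuous functions, allowing the whole induction to be carried out in that wider class.
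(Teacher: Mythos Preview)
Your proof is correct and its final step---partitioning $Q$ into $k^n$ subcubes and using the subadditivity $\sum_j a_j^p\le(\sum_j a_j)^p$---is exactly what the paper does. The only difference lies in how the unit-cube estimate $\int_Q|u|^p\le C_p\bigl(\int_Q|\nabla u|^p+(\int_Q|u|)^p\bigr)$ is obtained. You build it by induction on the dimension, pushing one variable at a time into an auxiliary function $h(x')=\int_0^1|u(x',x_n)|\,dx_n$ and invoking the $(n-1)$-dimensional hypothesis on~$h$. The paper instead proves the full $n$-dimensional Poincar\'e inequality $\int_Q|u-u_Q|^p\le n^p\int_Q|\nabla u|^p$ in one shot, via the telescoping path $u(x)-u(y)=\sum_i\bigl(u(x_1,\dots,x_i,y_{i+1},\dots,y_n)-u(x_1,\dots,x_{i-1},y_i,\dots,y_n)\bigr)$, and then combines it with the triangle inequality $\|u\|_{L^p}\le\|u-u_Q\|_{L^p}+|Q|^{1/p}|u_Q|$. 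The paper's route has the minor advantage of never leaving the $C^2$ class (avoiding your Lipschitz digression on~$h$) and gives an explicit constant $n^p$; your inductive route is equally valid and perhaps more in the spirit of the neighbouring Proposition~\ref{prop5.2}.
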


\begin{proof}
Following \cite[Proposition 12.29]{Le}, we first give a very simple proof of Poincar\'e's inequality in a cube:
\begin{equation}\label{poincare}
\int_{Q}|u-u_Q|^{p}\, dx \leq n^p \int_{Q}\abs{\nabla u}^{p}dx,
\end{equation}
where $u_Q$ is the average of $u$ in $Q$. To establish it, for $x$ and $y$ in $Q$, notice that
\begin{eqnarray*}
|u(x)-u(y)| &\leq & |u(x)-u(x_1,\ldots,x_{n-1},y_n)|+\cdots+ |u(x_1, y_2,\ldots,y_n)-u(y)|\\
&\leq & \sum_{i=1}^n \int_0^1 |u_{x_i}(x_1,\ldots, x_{i-1}, t, y_{i+1},\ldots, y_n)|\, dt.
\end{eqnarray*}
Using H\"older's inequality we obtain
\begin{eqnarray*}
|u(x)-u(y)|^p &&\\
& & \hspace{-2.5cm} \leq \left( \sum_{i=1}^n \left( \int_0^1 |\nabla u(x_1,\ldots, x_{i-1}, t, y_{i+1},\ldots, y_n)|^p\, dt\right)^{1/p} \right)^p\\
&&  \hspace{-2.5cm} \leq n^{p-1} \sum_{i=1}^n \int_0^1 |\nabla u(x_1,\ldots, x_{i-1}, t, y_{i+1},\ldots, y_n)|^p\, dt.
\end{eqnarray*}
We conclude Poincar\' e's inequality, as follows:
\begin{eqnarray*}
\int_Q |u(x)-u_Q|^p\, dx &=&\int_Q \left| \int_Q  (u(x)-u(y)) \, dy\right|^p dx \\
&& \hspace{-2.8cm}  \le \int_Q \int_Q  |u(x)-u(y)|^p \, dy\, dx \\
& & \hspace{-2.8cm}  \le n^{p-1} \int_Q \int_Q \sum_{i=1}^n \int_0^1 |\nabla u(x_1,\ldots, x_{i-1}, t, y_{i+1},\ldots, y_n)|^p\, dt\, dy\, dx\\
& & \hspace{-2.8cm}  =n^{p-1} \sum_{i=1}^n \int_Q |\nabla u(x_1,\ldots, x_{i-1}, t, y_{i+1},\ldots, y_n)|^p\, dx_1\, dx_{i-1}\,dt\, dy_{i+1}\ldots dy_n.
\end{eqnarray*}

Now, rescaling \eqref{poincare}, we see that Poincar\'e's inequality for functions in a cube $Q_\delta$ of side-length $\delta$ reads as $\|u-u_{Q_\delta}\|_{L^p (Q_\delta)} \leq n\delta\|\nabla u\|_{L^p (Q_\delta)}$.
By the triangular inequality we deduce 
$$
\|u\|_{L^p (Q_\delta)} \leq n\delta \|\nabla u\|_{L^p (Q_\delta)}+ |Q_\delta|^{1/p}|u_{Q_\delta}|= n\delta \|\nabla u\|_{L^p (Q_\delta)}+ \delta^{-n/p'}\int_{Q_\delta}\abs{u}dx.
$$
Now, for any given integer $k>1$ we divide $Q$ into $k^n$ disjoint cubes $Q_j$ of side-length $\delta=1/k$. 
By the previous inequality we have
$$
\int_{Q_j}|u|^{p}\, dx \leq C_p\left( \frac{1}{k^p} \int_{Q_j}\abs{\nabla u}^{p}dx+  k^{n(p-1)}\left( \int_{Q_j}\abs{u}dx\right)^p\, \right)
$$
on each cube $Q_j$.
Adding up all these inequalities and taking into account that $\sum_j (\int_{Q_j}\abs{u}dx)^p \le ( \sum_j \int_{Q_j}\abs{u}dx)^p= ( \int_{Q}\abs{u}dx)^p$, we deduce \eqref{5.2bis} by choosing an integer $k$ such that $\tilde\ep^{\,-1}\le k< 2 \tilde\ep^{\,-1}$.
\end{proof}

\section{Absorbing errors in larger balls}
\label{app:errors-balls}

The following is a general ``abstract'' lemma  due to L. Simon \cite{Simon}.
We include its proof for completeness. For brevity of the proof we take $\beta\geq 0$ in its statement, although the lemma also holds for all $\beta\in\R$.

\begin{lemma}{\rm (\cite[Lemma, page 398]{Simon})}\label{lem_abstract}
Let $\beta\geq 0$ and $\bar C>0$. Let  $\mathcal B$ be the class of all open balls $B$ contained in the unit ball $B_1$ of $\R^n$ and let $\sigma :\mathcal B \rightarrow [0,+\infty)$ satisfy the following subadditivity property:
\[ \sigma(B)\le \sum_{j=1}^N \sigma(B^j) \quad \mbox{ whenever }  N\in\Z^+, \{B^j\}_{j=1}^N \subset \mathcal B, \text{ and } B \subset \bigcup_{j=1}^N B^j. \]

It follows that there exists a constant $\delta>0$, which depends only on $n$ and $\beta$, such that if
\begin{equation}\label{hp-lem}
 \rho^\beta \sigma\bigl(B_{\rho/2}(y)\bigr) \le \delta \rho^\beta \sigma\bigl(B_\rho(y)\bigr)+ \bar C\quad \mbox{whenever }B_\rho(y)\subset B_1,
 \end{equation}
then
\[ \sigma(B_{1/2}) \le C_\beta \bar C\]
for some constant $C_\beta$ which depends only on $n$ and $\beta$.
\end{lemma}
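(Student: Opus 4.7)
The proof follows a standard Morrey-type iteration, adapted to the subadditive setting. First, applying subadditivity to the trivial cover $\{B'\}$ of $B\subset B'$ (both in $\mathcal B$) yields monotonicity of $\sigma$: if $B\subset B'$ then $\sigma(B)\le\sigma(B')$. In particular $\sigma(B_1)<\infty$ provides a finite a priori bound. I would then introduce the scale-invariant, nondecreasing quantity
\[
\phi(\tau)\ :=\ \sup\bigl\{\rho^\beta\sigma(B_\rho(y)):B_\rho(y)\subset B_1,\ \rho\le\tau\bigr\},\qquad\tau\in(0,1],
\]
which is finite by monotonicity and satisfies $\sigma(B_{1/2})=2^\beta\phi(1/2)$; the goal thus reduces to showing $\phi(1/2)\le C_\beta\bar C$.

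Rewriting the hypothesis via the substitution $s=\rho/2$ and dividing by $2^\beta$ yields
\[
s^\beta\sigma(B_s(y))\ \le\ \delta\,2^{-\beta}\,(2s)^\beta\sigma(B_{2s}(y))+2^{-\beta}\bar C\qquad\text{whenever }B_{2s}(y)\subset B_1,
\]
giving the \emph{interior} bound $s^\beta\sigma(B_s(y))\le\delta\,2^{-\beta}\phi(2s)+2^{-\beta}\bar C$ for $|y|+2s\le 1$. For the \emph{boundary} regime $|y|+2s>1$ with $B_s(y)\subset B_1$, the doubled ball $B_{2s}(y)$ escapes $B_1$ and the hypothesis cannot be applied directly at scale $2s$. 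These balls I would handle by covering $B_s(y)$ with a dimensional number $N=N(n)$ of smaller balls of radius comparable to $s$ whose doubled versions remain inside $B_1$ (placing centers at distance $\ge s$ from $\partial B_1$), and then applying subadditivity together with the interior bound to these sub-balls. Combining both regimes produces a recursive inequality of the form
\[
\phi(\tau)\ \le\ K\delta\,\phi(2\tau)+K\bar C\qquad\text{for all }\tau\le\tau_0,
\]
with $K$ and $\tau_0$ depending only on $n$ and $\beta$.

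Choosing $\delta$ dimensionally small so that $K\delta<1/2$, iterating this inequality upward from a scale $\tau$ to the maximal scale $2^k\tau\le 1$ gives
\[
\phi(\tau)\ \le\ (K\delta)^k\phi(1)+\frac{K\bar C}{1-K\delta},
\]
with $\phi(1)\le\sigma(B_1)<\infty$ a finite but a priori uncontrolled quantity. The last step absorbs this $\sigma(B_1)$ dependence: a subadditivity-based cover of $B_{1/2}$ by balls of radius $r=2^{-k}$ small enough and application of the iteration bound at that scale, followed by absorption (made possible by the smallness of $(K\delta)^k$ relative to the covering multiplicity $C_n r^{-n}$ once $\delta$ is small enough depending on $n$ and $\beta$), closes the estimate and produces $\phi(1/2)\le C_\beta\bar C$, whence $\sigma(B_{1/2})\le 2^\beta C_\beta\bar C$.

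The main obstacle will be the careful bookkeeping of the boundary covering argument, which must yield a recursive inequality with dimensional constants and a contraction rate $K\delta<1$, together with the final absorption step that compensates the multiplicity of the covers by the smallness of $\delta$. Once these two technical points are set up, the iteration is routine.
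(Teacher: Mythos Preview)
Your boundary-regime covering has a genuine gap. When $B_s(y)\subset B_1$ with $|y|+s=1$, the ball $B_s(y)$ contains points at arbitrarily small distance from $\partial B_1$. But any ball $B_{cs}(z)$ with $B_{2cs}(z)\subset B_1$ has its center at distance at least $2cs$ from $\partial B_1$, hence can only reach points at distance greater than $cs$ from $\partial B_1$. Thus no finite family of such balls with radii comparable to $s$ can cover $B_s(y)$; you would need radii shrinking to zero near $\partial B_1$, and the number of balls would not be dimensional. This kills the recursion $\phi(\tau)\le K\delta\,\phi(2\tau)+K\bar C$ with dimensional $K$. Your final absorption step is also problematic as stated: covering $B_{1/2}$ by $\sim 2^{kn}$ balls of radius $2^{-k}$ multiplies the $\bar C$ contribution by $2^{k(n+\beta)}$, which diverges as $k\to\infty$.

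The paper avoids both issues by a different choice of sup. Instead of your $\phi(\tau)$, it sets
\[
S:=\sup_{B_\rho(y)\subset B_1}\rho^\beta\,\sigma\bigl(B_{\rho/2}(y)\bigr),
\]
with the \emph{half-radius} ball inside $\sigma$. Fixing once a dimensional cover $B_{1/2}\subset\bigcup_{i=1}^M B_{1/8}(x_i)$ with $x_i\in B_{1/2}$ and rescaling gives $B_{\rho/2}(y)\subset\bigcup_i B_{\rho/8}(y+\rho x_i)$. The key point is that both the balls $B_{\rho/4}(y+\rho x_i)$ (needed to apply the hypothesis) and the balls $B_{\rho/2}(y+\rho x_i)$ (needed to invoke $S$ again) lie inside $B_\rho(y)\subset B_1$ automatically, so no boundary regime ever arises. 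Subadditivity plus the hypothesis then yield directly $S\le 2^\beta M\delta\,S+4^\beta M\bar C$; choosing $\delta=1/(2^{1+\beta}M)$ gives $S\le C_\beta\bar C$, and taking $\rho=1$, $y=0$ recovers $\sigma(B_{1/2})\le S$. No iteration, no interior/boundary split, no final absorption.
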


\begin{proof}
Define
\[
S:= \sup_{B_\rho(y)\subset B_{1}} \rho^\beta \sigma\bigl(B_{\rho/2}(y)\bigr).
\]
We clearly have that $S<\infty$ by subadditivity, since $\beta\geq 0$ and $\sigma(B_1)<\infty$. 

Consider a finite covering $B_{1/2} \subset \bigcup_{i=1}^M B_{1/8}(x_i)$ of $B_{1/2}$ by $M$ balls (with $M$ depending only on $n$) of radius $1/8$ centered at points of $x_i\in B_{1/2}$.

Now, given a ball $B_\rho(y)$ contained in $B_{1}$, we have that
$$
B_{\rho/2} (y)\subset \bigcup_{i=1}^M B_{\rho/8}(y+\rho x_i)
$$
and $B_{\rho/2}(y+\rho x_i) \subset B_{\rho}(y)\subset B_{1}$. Hence, using the subadditivity of $\sigma$, assumption~\eqref{hp-lem}, and the definition of $S$ we deduce
\[
\begin{split}
 \rho^\beta \sigma\bigl(B_{\rho/2}(y)\bigr)& \le    \sum_{i=1}^M  \rho^\beta \sigma\bigl(B_{\rho/8}(y+\rho x_i)\bigr)\\
& \le  4^\beta \sum_{i=1}^M \big\{  \delta  (\rho/4)^\beta   \sigma\big(B_{\rho/4}(y+\rho x_i)\big)+\bar C\big\} \\
&= 2^\beta\delta \sum_{i=1}^M   (\rho/2)^\beta   \sigma\big(B_{\rho/4}(y+\rho x_i)\big)+4^\beta M\bar C\\
& \le  2^\beta \delta M S +  4^\beta M\bar C.
\end{split}
\]

Thus, taking supremum for all balls $B_\rho(y)\subset B_{1}$ in the left-hand side we obtain 
\[S \le 2^\beta\delta M S + 4^\beta M\bar C.\]
Choosing $\delta := 1/(2^{1+\beta}M)$ we deduce $S/2 \le 4^\beta M\bar C$. This clearly leads to the desired bound on $\sigma(B_{1/2})$.
\end{proof}

\section{Morrey's embedding theorem stated with radial derivatives}
\label{app:morrey}

Here we give all details of the proof of Morrey's estimate (see \cite[Section 7.9]{GT}), stated using radial derivatives instead of the usual full gradient ---since it is useful to use this (new) version in our applications. We start with the key lemma. As in Section~\ref{sect:Holder}, we use the notation
$$
u_{r_y}(x):= \frac{x-y}{|x-y|}\cdot \nabla u (x).
$$

\begin{lemma}\label{lemmaC1}
Let $y\in\R^n$, $d>0$, and $u$ be a $C^1$ function in $\overline B_{d}(y)\subset \R^n$. Assume that, for some positive constants~$\alpha$ and $\bar C$,
\begin{equation}\label{radial4}
\int_{B_\rho(y)}|u_{r_y}|\,dx\leq \bar C\,\rho^{n-1+\alpha}\quad \mbox{ for all }\rho \leq d.
\end{equation}

Let $S$ be any measurable subset of $B_{d}(y)$ and $u_S:= \frac{1}{|S|} \int_S u\, dx$.
Then,
$$
|u(y)-u_S| \leq C_\alpha \bar C \, \frac{d^{\, n}}{|S|}\, d^{\,\alpha}
$$
for some constant $C_\alpha$ depending only on $n$ and $\alpha$.
\end{lemma}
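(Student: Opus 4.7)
The plan is to reduce to the case $y=0$ by translation (so that $u_{r_y}$ becomes the standard radial derivative $u_r$) and then exploit the fundamental theorem of calculus along rays emanating from the origin. For any $x\in B_d$, writing $x=\rho\sigma$ with $\rho=|x|$ and $\sigma\in S^{n-1}$, one has
\begin{equation*}
u(x)-u(0)=\int_0^{\rho}u_r(t\sigma)\,dt,
\end{equation*}
so that $|u(x)-u(0)|\leq\int_0^{\rho}|u_r(t\sigma)|\,dt$. This identity is the whole point of stating Morrey's estimate with radial derivatives: only $u_r$ (and not the full gradient) is needed to reconstruct values of $u$ from its value at the base point.

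Next I would average the inequality $|u(x)-u(0)|\leq\int_0^{|x|}|u_r(tx/|x|)|\,dt$ over $x\in S$, bound the integration domain by $B_d$, and pass to spherical coordinates to get
\begin{equation*}
|u_S-u(0)|\leq\frac{1}{|S|}\int_{S^{n-1}}d\sigma\int_0^d \rho^{n-1}d\rho\int_0^{\rho}|u_r(t\sigma)|\,dt.
\end{equation*}
Fubini on the $(\rho,t)$ variables (swapping $\int_0^d d\rho\int_0^\rho dt=\int_0^d dt\int_t^d d\rho$) yields
\begin{equation*}
|u_S-u(0)|\leq\frac{1}{|S|}\int_{S^{n-1}}d\sigma\int_0^d\frac{d^n-t^n}{n}\,|u_r(t\sigma)|\,dt\leq\frac{d^n}{n|S|}\int_{S^{n-1}}d\sigma\int_0^d|u_r(t\sigma)|\,dt.
\end{equation*}

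The remaining task is to bound the unweighted spherical integral $I(d):=\int_{S^{n-1}}d\sigma\int_0^d|u_r(t\sigma)|\,dt$ in terms of the hypothesis, which only controls the weighted integral $G(\rho):=\int_0^\rho t^{n-1}g(t)\,dt\leq\bar C\rho^{n-1+\alpha}$ where $g(t):=\int_{S^{n-1}}|u_r(t\sigma)|\,d\sigma$. I would write $I(d)=\int_0^d t^{-(n-1)}G'(t)\,dt$ and integrate by parts: since $t^{-(n-1)}G(t)\leq\bar C t^\alpha\to 0$ as $t\downarrow 0$, the boundary term at $0$ vanishes and one obtains
\begin{equation*}
I(d)=d^{-(n-1)}G(d)+(n-1)\int_0^d t^{-n}G(t)\,dt\leq\bar C d^\alpha+(n-1)\bar C\int_0^d t^{\alpha-1}\,dt=\tfrac{n-1+\alpha}{\alpha}\,\bar C\,d^\alpha.
\end{equation*}
Combining this with the previous display gives $|u_S-u(0)|\leq C_\alpha\bar C d^n|S|^{-1}d^\alpha$, as desired.

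The only mildly delicate step is the integration by parts against the weight $t^{-(n-1)}$, which is not integrable at the origin and would blow up were it not for the control $G(t)=o(t^{n-1})$ forced by the hypothesis through the exponent $n-1+\alpha>n-1$; this is exactly where the Hölder exponent $\alpha>0$ (and the constant $C_\alpha$ with its $1/\alpha$ factor) enters. Everything else is an exercise in Fubini and spherical coordinates, and the proof goes through in all dimensions $n\geq 1$.
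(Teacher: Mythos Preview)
Your proof is correct and follows essentially the same approach as the paper's: both write $u(x)-u(y)$ as an integral of $u_{r_y}$ along the ray from $y$ to $x$, average over $S$, pass to spherical coordinates to obtain $|u(y)-u_S|\le \frac{d^n}{n|S|}\int_{B_d(y)} r_y^{1-n}|u_{r_y}|\,dx$, and then control this last integral by integration by parts against the function $G(\rho)=\int_{B_\rho(y)}|u_{r_y}|\,dx$ (which is the paper's $\varphi$). The only cosmetic difference is that you compute the $\rho$-integral via Fubini to get $(d^n-t^n)/n$ before discarding the $-t^n$, whereas the paper simply extends the $\rho$-integral to $(0,d)$ from the start.
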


\begin{proof}
We follow \cite[Chapter 7]{GT}. Given $z\in S$, we have 
\begin{eqnarray*}
u(z)-u(y) &=& \int_0^{|z-y|}\frac{z -y}{|z-y|} \cdot \nabla u \left( y+\rho\frac{z -y}{|z-y|} \right)\,d\rho\\
&=&\int_0^{|z-y|}u_{r_y} \left(y+\rho\frac{z -y}{|z-y|}\right)\,d\rho.
\end{eqnarray*}
Averaging in $z\in S$, taking absolute values, and using spherical coordinates $z=\bar\rho\,\omega$ centered at~$y$ (hence, $\bar\rho=|z-y|$), we deduce
\begin{eqnarray*}
|u(y)-u_{S}| &=& \left| \frac{1}{|S|} \int_{S} dz \int_0^{|z-y|} d\rho\,\, u_{r_y} \left(y+\rho\frac{z -y}{|z-y|}\right) \right|
\\
&\leq & \frac{1}{|S|} \int_{B_{d}(y)} dz \int_0^{d} d\rho\,\, \left| u_{r_y} \left(y+\rho\frac{z -y}{|z-y|}\right) \right|\\
&= & \frac{1}{|S|} \int_0^{d} d\bar\rho \,\, \bar\rho^{\, n-1} \int_{S^{n-1}} d\omega \int_0^{d} d\rho\,\, | u_{r_y} (y+\rho\omega)|
 \\
&= & \frac{d^{\,n}}{n|S|}\int_{S^{n-1}} d\omega \int_0^{d} d\rho\,\, | u_{r_y} (y+\rho\omega)|
\\
&= & \frac{d^{\,n}}{n|S|} \int_{B_{d}(y)} r_y^{1-n} | u_{r_y}|\, dx.
\end{eqnarray*}

We now control this last integral. For this, we define
$$
\varphi(\rho):=\int_{B_\rho(y)} | u_{r_y}|\, dx
$$
and recall that $\varphi'(\rho)=\int_{\partial B_\rho(y)} | u_{r_y}|\, d{\mathcal H}^{n-1}$. We now use \eqref{radial4} to get
\begin{eqnarray*}
 \int_{B_{d}(y)} r_y^{1-n} | u_{r_y}|\, dx &=& \int_0^{d} \rho^{1-n} \varphi'(\rho)\, d\rho = d^{\,{1-n}}\varphi(d)
 +(n-1) \int_0^{d} \rho^{-n} \varphi(\rho)\, d\rho \\
 &\leq & \bar Cd^{\,{1-n}} d^{\,n-1+\alpha}
 +(n-1) \bar C \int_0^{d} \rho^{-n} \rho^{n-1+\alpha}\, d\rho \leq C_\alpha \bar C d^{\,\alpha},
\end{eqnarray*}
which concludes the proof.
\end{proof}

\begin{theorem}\label{thmC2}
Let $u$ be a $C^1$ function in $B_{1}\subset \R^n$ such that, for some positive constants~$\alpha$ and $\bar C$,
\begin{equation*}
\int_{B_\rho(y)}|u_{r_y}|\,dx\leq \bar C\,\rho^{n-1+\alpha}\quad \mbox{ for all }y \in \bar B_{1/4} \text{ and } \rho \leq 1/2.
\end{equation*}

Then,
$$
\|u\|_{C^\alpha(\overline B_{1/4})}\leq C_\alpha \left( \bar C + \Vert u\Vert_{L^1(B_{1})} \right)
$$
for some constant $C_\alpha$ depending only on $n$ and $\alpha$.
\end{theorem}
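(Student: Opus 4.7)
The plan is to deduce Theorem~\ref{thmC2} from Lemma~\ref{lemmaC1} via the classical Morrey argument, using two different choices of the averaging set $S$: a large one (to get an $L^\infty$ bound) and a small one centered between two points (to get the $C^\alpha$ seminorm bound).

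First, I would derive the $L^\infty$ bound. Fix $y\in\overline B_{1/4}$. Since $B_{1/2}(y)\subset B_1$, we can apply Lemma~\ref{lemmaC1} with $d=1/2$ (the hypothesis \eqref{radial4} holds for all $\rho\leq 1/2$ by assumption) and with $S:=B_{1/2}(y)$. Since $\lvert S\rvert=c_n(1/2)^n$, the lemma yields $|u(y)-u_S|\leq C_\alpha \bar C$. On the other hand, $|u_S|\leq |S|^{-1}\|u\|_{L^1(B_1)}\leq C\|u\|_{L^1(B_1)}$, so
$$
\|u\|_{L^\infty(\overline B_{1/4})}\leq C_\alpha\bigl(\bar C+\|u\|_{L^1(B_1)}\bigr).
$$

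Second, I would bound the $C^\alpha$ seminorm. Take $y_1,y_2\in\overline B_{1/4}$. If $|y_1-y_2|>1/8$, the seminorm bound follows at once from the $L^\infty$ estimate just established. If instead $|y_1-y_2|\leq 1/8$, set $d:=2|y_1-y_2|\leq 1/4$, so $B_d(y_i)\subset B_{1/2}(y_i)\subset B_1$ for $i=1,2$. Let $m:=(y_1+y_2)/2$ and define $S:=B_d(y_1)\cap B_d(y_2)$. Since $|y_i-m|=d/4$, we have $B_{d/2}(m)\subset S$, hence $|S|\geq c_n(d/2)^n$. Applying Lemma~\ref{lemmaC1} to the base point $y_1$, with radius $d$ and set $S$, gives
$$
|u(y_1)-u_S|\leq C_\alpha\bar C\,\frac{d^{\,n}}{|S|}\,d^{\,\alpha}\leq C_\alpha'\bar C\,d^{\,\alpha},
$$
and the same bound with $y_2$ in place of $y_1$. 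The triangle inequality then yields
$$
|u(y_1)-u(y_2)|\leq 2C_\alpha'\bar C\,d^{\,\alpha}=2^{1+\alpha}C_\alpha'\bar C\,|y_1-y_2|^\alpha.
$$

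Combining the two cases gives the claimed $C^\alpha$ estimate on $\overline B_{1/4}$. There is no real obstacle here: the proof reduces to book-keeping around Lemma~\ref{lemmaC1}. The only point needing a small observation is that the intersection $B_d(y_1)\cap B_d(y_2)$ contains a ball of radius proportional to $d$ (so that $d^n/|S|$ is controlled by a dimensional constant), which is why the choice $d=2|y_1-y_2|$, rather than $d=|y_1-y_2|$, is convenient.
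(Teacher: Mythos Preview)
Your proof is correct and follows essentially the same route as the paper: apply Lemma~\ref{lemmaC1} at two base points with a common averaging set $S=B_d(y_1)\cap B_d(y_2)$ to control the seminorm, and with a ball of fixed size to control the $L^\infty$ norm. The only cosmetic differences are that the paper takes $d=|y-\bar y|$ (simply asserting $|S|=c(n)d^{\,n}$ for the lens) rather than your more explicit $d=2|y_1-y_2|$, and it derives the $L^\infty$ bound by reusing the same intersection set $S$ with a point $\bar y$ chosen so that $d\ge 1/4$, thereby avoiding your case split.
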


\begin{proof}
Given $y\in \overline B_{1/4}$ and $\bar y\in \overline B_{1/4}$, define
$$
d:=|y-\bar y|\le \frac{1}{2} \quad\text{and} \quad S:=B_{d}(y)\cap B_d(\bar y).
$$ 
We use Lemma~\ref{lemmaC1} both in $B_d(y)$ and in $B_d(\bar y)$ (hence, the second time with $y$ replaced by $\bar y$) and, since $|S|=c(n) d^{\,n}$ for some positive dimensional constant $c(n)$, we conclude that $|u(y)-u(\bar y)|\le |u(y)-u_S|+|u_S-u(\bar y)|\leq C_\alpha \bar C d^{\,\alpha}= C_\alpha \bar C |y-\bar y|^\alpha$. We have thus controlled the $C^\alpha$-seminorm of $u$ in $\overline B_{1/4}$. 

To bound the $L^\infty$ norm, given $y\in \overline B_{1/4}$ we choose a point $\bar y\in \overline B_{1/4}$ such that $d:=|y-\bar y|\geq 1/4$. We now use the previous bounds to obtain $|u(y)|\le |u(y)-u_S|+|u_S|\leq C_\alpha \bar C d^{\,\alpha} + C d^{-n} \Vert u\Vert_{L^1(B_{1})}$. Since $1/4\leq d\leq 1/2$, this concludes the proof.
\end{proof}

\section{Gradient estimate for a mixed boundary value problem}
\label{app:neumann}

Here we follow \cite[Appendix B]{C22quant}. We show that the solution to \eqref{torsion}, i.e.,
\begin{equation}\label{torsion-app}
\begin{cases} -\Delta \varphi=1 \quad  \quad &\mbox{in } A_{\rho_1,\rho_2}^+
\\
\varphi=0 &\mbox{on  } \partial^0 A_{\rho_1,\rho_2}^+
\\
\varphi_\nu=0 &\mbox{on  } \partial^+ A_{\rho_1,\rho_2}^+,
\end{cases}
\end{equation}
with $\rho_1\in (4.1,4.2)$ and $\rho_2\in (4.8,4.9)$ ---that we used in Step 1 of the proof of Proposition \ref{prop:3bdry}--- satisfies $|\nabla \varphi|\le C$ in $A_{\rho_1,\rho_2}^+$ for some dimensional constant $C$. Note that the equation is posed in a nonsmooth domain and that this is a mixed Dirichlet-Neumann problem. 

To establish this gradient estimate, we first need to control the $L^\infty$ norm of $\varphi$. For this, recall that, by the maximum principle, $\varphi\ge 0$. Multiplying the equation in~\eqref{torsion-app} by $\varphi^\beta$, with $\beta\ge 1$, and integrating by parts, we get 
\begin{equation}\label{moser}
\int_{A_{\rho_1,\rho_2}^+} |\nabla (\varphi^{(\beta+1)/2})|^2\, dx = \frac{(\beta+1)^2}{4\beta} \int_{A_{\rho_1,\rho_2}^+} \varphi^\beta\, dx.
\end{equation}
We now proceed with a standard iteration procedure, as in the proof of~\cite[Theorem~8.15]{GT}.  To bound by below the left-hand side of \eqref{moser}, we use the Sobolev inequality for functions vanishing on $\partial^0 A_{\rho_1,\rho_2}^+$ (see Footnote~\ref{Sob+Poi}). Since $\beta +1\geq\beta$, H\"older's inequality yields $\Vert\varphi\Vert_{L^{\chi\cdot\beta}}\leq (C\beta)^{1/\beta}\Vert\varphi\Vert_{L^{\beta}}$ for some dimensional exponent $\chi>1$. Iterating this bound, as in the proof of~\cite[Theorem~8.15]{GT}, we control the $L^\infty$ bound of $\varphi$ by $C\Vert\varphi\Vert_{L^1}$. Finally, notice that
$\Vert\nabla\varphi\Vert_{L^2}^2=\Vert\varphi\Vert_{L^1}$ by \eqref{moser} and that $\Vert\varphi\Vert_{L^1}\leq C \Vert\nabla\varphi\Vert_{L^2}$ by Poincar\'e's inequality for functions vanishing on $\partial^0 A_{\rho_1,\rho_2}^+$ (see Footnote~\ref{poincare-0}). It follows that $\Vert\varphi\Vert_{L^\infty}\leq C\Vert\varphi\Vert_{L^1}\leq C$ for
a dimensional constant $C$.

Now, a simple way to prove the gradient estimate consists of considering the function $\tilde\varphi:=\varphi + x_n^2/2$, which is harmonic and vanishes on the flat boundary. Reflecting it oddly, it becomes a harmonic function in the full annulus $A_{\rho_1,\rho_2}$ (a smooth domain) and has $\tilde\varphi_r= |x_n|x_n/r$ as Neumann data on its boundary. Since this data is a $C^{1,\alpha}$ function on $\partial A_{\rho_1,\rho_2}$, we can now apply global Schauder's regularity theory for the Neumann problem. We use Theorems 6.30 and 6.31 in \cite{GT} to get a $C^{2,\alpha}$ estimate for $\tilde\varphi$ up to the boundary ---and, in particular, a global gradient estimate. The $C^{2,\alpha}$ bound depends only on the $C^{1,\alpha}$ norm of the Neumann data, the $L^\infty$ norm of $\tilde\varphi$ (that we already controlled), and the smoothness of the annulus $A_{\rho_1,\rho_2}$ (which is uniform in $\rho_1$ and $\rho_2$ due to the constraints we placed on these parameters). 

\section{Singular solutions satisfying the weighted radial derivative estimate}
\label{app:example}

In this appendix we answer the following non obvious Real Analysis question that remained open for some years. The question originated when discovering Proposition~\ref{prop:1} around 2013.  We use the notation
$$
u_{r_y}(x):= \frac{x-y}{|x-y|}\cdot \nabla u (x).
$$

\vspace{4pt}

\noindent {\bf Question.} 
{\it For $n\geq 2$, given a function $u$ defined in some ball of $\R^n$ centered at the origin, assume that the bounds
\begin{equation}\label{eq:11}
\int_{B_\rho(y)}r_y^{2-n} u_{r_y}^2\,dx \leq C\rho^{2-n}\int_{B_{3\rho/2}(y)\setminus B_\rho(y)}|\nabla u|^2\,dx < +\infty,
\end{equation} 
hold for all $y$ in a neighborhood of the origin and all $\rho$ small enough. Do they lead to an~$L^\infty$ bound for~$u$ around the origin?}

\vspace{4pt}

The answer is clearly affirmative for $n=2$. Indeed, in this dimension the weight $r_y^{2-n}$ in the left-hand side of \eqref{eq:11} is identically 1 and one can average the inequality with respect to $y$ (taking $y$ and $\rho$ in appropriate sets, as we did in Step 2 of the proof of Theorem 1.2 in \cite{CFRS}) to get the estimate
$$
\int_{B_\rho}|\nabla u|^2\,dx \leq C\int_{B_{\lambda\rho}\setminus B_\rho}|\nabla u|^2\,dx
$$
for all $\rho$ small enough and for some $\lambda >1$. Now, as in previous sections, this (when used with balls centered at any other given point instead of the origin) and the hole-filling technique lead to the power decay in $\rho$ of the previous quantity. From this, one gets interior H\"older regularity by Theorem~\ref{thmC2}, using the Cauchy-Schwarz inequality and that $n=2$.

Instead, for $n\ge 3$, we (as well as some other experts) thought that estimate \eqref{eq:11} could perhaps suffice to get interior boundedness.  Here we show that this is not the case. Note first that we cannot take $u$ to be a positive radially decreasing function of $n$~variables. Indeed, in such case, \eqref{eq:11} used with $y=0$ can be iterated (as in previous sections) since $|\nabla u|^2=u_r^2$, leading to power decay in $\rho$ for \eqref{eq:11}. From this, one gets the
boundedness of $u$ around the origin, by Lemma~\ref{lemmaC1} and the Cauchy-Schwarz inequality. 

The following is the precise fact that we prove and the class of functions that provide a counterexample. 

\vspace{4pt}

\noindent {\bf Claim.} {\em For $n\geq 3$, there exist $W^{1,2}$ weak solutions $u$  of equations of the form $-\Delta u=f(u)$ in a ball of $\R^n$ around $0$, with $f$ smooth, nonnegative, increasing, and convex, that satisfy \eqref{eq:11} but  are unbounded in any neighborhood of the origin. In view of Theorem~\ref{thm:0} (and the approximation procedure described in Remark~\ref{monsters}), such solutions are unstable in dimensions $3\leq n\leq 9$. 

Some concrete examples are provided by certain functions $u$ of two Euclidean variables, but placed in $\R^n$, such as 
\begin{equation}\label{counter-exp}
u(x)=u(x_1,\ldots,x_n)=\log \left(-\log \ (x_1^2+x_2^2)^{1/2}\right),\qquad x\in B_{1/e} \subset \R^n, n\geq 3.
\end{equation}
A larger class of unbounded functions for which  \eqref{eq:11} holds are those satisfying \eqref{gen-counter} below ---these might not be solutions of semilinear equations, though.
}

\vspace{4pt}

One can easily check that the function~$u$ in \eqref{counter-exp} is an unbounded $W^{1,2}(B_{1/e})$ positive weak solution of $-\Delta u= e^{2(e^u-u)}$ in $B_{1/e}\subset \R^n$, where $f(u)=e^{2(e^u-u)}$ is a nonnegative, increasing, and convex function of $u>0$. On the other hand, we will next show that $u$, as in \eqref{counter-exp}, satisfies \eqref{eq:11}.

Indeed, using the notation
$$
B_\rho^{n}:=B_\rho\subset \R^n
$$
for balls in $\R^n$, we claim that, for $n\ge 3$,  \eqref{eq:11} is satisfied by any function of two Euclidean variables placed in $\R^n$,
\begin{equation}\label{gen-counter}
u(x)= u(x_1,\ldots,x_n)=\tilde u(x_1,x_2), \quad\text{ with $\tilde u\in W^{1,2}(B_1^{2})\setminus L^\infty(B_1^2)$.}
\end{equation}
Finally, recall that in dimension 2, $W^{1,2}(B_1^{2})$ functions may be unbounded ---\eqref{counter-exp} being a concrete example.

To verify \eqref{eq:11} we may assume that $y=0$  ---note that we are not assuming $\tilde u$ to be radially symmetric with respect to a point. Now, let
$$
x=(x',x'')\in \R^2\times \R^{n-2}, \quad r'=|x'|, \quad\text{and}\quad r''=|x''|.
$$
Since $u(x)=\tilde u(x')$, we have that
$ru_{r}=x\cdot \nabla u=r' \,\tilde u_{r'}$
and
$$
\int_{B_{\rho}^{n}}  r^{2-n}u_{r}^{2}\, dx=\int_{B_{\rho}^{2}} dx'\, r'^{\,2} \, \tilde u_{r'}^{\, 2}\int_{B^{n-2}_{\sqrt{\rho^{2}-r'^{\,2}}}}dx''\left(r'^{\,2} +r''^{\,2}\right)^{-n/2}.
$$
Now, since, for every $r'$,
$$
\int_{B^{n-2}_{\sqrt{\rho^{2}-r'^{\,2}}}}dx''\left( r'^{\,2}+r''^{\,2}\right)^{-n/2} 
=|S^{n-3}|\int_{0}^{\sqrt{\left(\frac{\rho}{r'}\right)^{2}-1}}\frac{t^{n-3}}{r'^{\, 2}\left(1+t^{2}\right)^{n/2}} \, dt \le  \frac{C}{r'^{\, 2}}
$$
for some dimensional constant $C$, we deduce that
\begin{equation}\label{upper-2d}
\int_{B_{\rho}^{n}}  r^{2-n}u_{r}^{2}\, dx\le C \int_{B_{\rho}^{2}} \tilde u_{r'}^{\,2}\, dx'.
\end{equation}

Next, to control the integral on an annulus in the right-hand side of \eqref{eq:11}, we use that
\begin{equation}\label{lower-2d} 
\int_{B_{\rho}^{n}} |\nabla u|^{2}\, dx=|B^{n-2}_1| \int_{B_{\rho}^{2}}\left|\nabla \tilde u\right|^{2}\left(\rho^{2}-r'^{\,2}\right)^{\frac{n-2}{2}} dx',
\end{equation}
both for the radius $\rho$ and for $3\rho/2$. We bound by below the integral on $B^n_{3\rho/2}$ by integrating the right-hand side of \eqref{lower-2d} (corresponding to $3\rho/2$ instead of $\rho$) only on $B^2_{\rho}$ (instead of on $B^2_{3\rho/2}$). In this way, we see that
\begin{equation*}
\begin{split}
\rho^{2-n}\int_{B_{3p/2}^{n}\setminus B_{\rho}^{n}}\left|\nabla u\right|^{2}\, dx &\\
&\hspace{-2cm} \ge 
|B^{n-2}_1|\int_{B_{\rho}^{2}} \left|\nabla \tilde u\right|^{2}  \left\{ \left( (3/2)^{2}-(r'/\rho)^{2}\right)^{\frac{n-2}{2}}-\left(1-(r'/\rho)^{2}\right)^{\frac{n-2}{2}}\right\}\, dx'.
\end{split}
\end{equation*}
Now, calling $a:=r'/\rho\in (0,1)$ and differentiating the function
$\left(t^{2}-a^{2}\right)^{\frac{n-2}{2}}$ with respect to $t$, we see that
$$
\left( (3/2)^{2}-(r'/\rho)^{2}\right)^{\frac{n-2}{2}}-\left(1-(r'/\rho)^{2}\right)^{\frac{n-2}{2}}=\left(n-2\right) \int_{1}^{3/2}\left(t^{2}-a^{2}\right)^{\frac{n-4}{2}}t\, dt.
$$
Note that $\left(t^{2}-a^{2}\right)^{\frac{n-4}{2}}\ge \left(t^{2}-1\right)^{\frac{n-4}{2}}$ if $n\geq 4$, and that $\left(t^{2}-a^{2}\right)^{\frac{n-4}{2}}\ge (3/2)^{-1}$ if $n=3$. We deduce that
$$
\rho^{2-n}\int_{B_{3p/2}^{n}\setminus B_{\rho}^{n}}\left|\nabla u\right|^{2} dx \geq c 
\int_{B_{\rho}^{2}} \left|\nabla \tilde u\right|^{2}  dx'
$$
for some positive dimensional constant $c$. This, together with \eqref{upper-2d}, establishes that~\eqref{eq:11} holds for this class of functions.


\begin{thebibliography}{000000}


\bibitem[B]{Brezis}
Brezis, Haim.
Is there failure of the inverse function theorem? {\it Morse theory, minimax theory and their applications to nonlinear differential equations}, 23-33, 
New Stud. Adv. Math.,~1, Int. Press, Somerville, MA, 2003.

\bibitem[BCMR]{BCMR}
Brezis, Haim; Cazenave, Thierry; Martel, Yvan; Ramiandrisoa, Arthur.
Blow up for $u_t-\Delta u=g(u)$ revisited.
{\it Adv. Differential Equations} 1 (1996), 73-90.

\bibitem[BV]{BV}
Brezis, Haim; V\'azquez, Juan Luis.  
Blow-up solutions of some nonlinear elliptic problems. 
{\it Rev. Mat.Univ. Complut. Madrid} 10 (1997), 443-469.

\bibitem[BC]{BC}
Bruera, Renzo; Cabr\'e, Xavier.
Regularity of stable solutions to the MEMS problem up to the optimal dimension 6.
Forthcoming.

\bibitem[C1]{C10}
Cabr\'e, Xavier.  
Regularity of minimizers of semilinear elliptic problems up to dimension~4.
{\it Comm. Pure Appl. Math.} 63 (2010), 1362-1380.

\bibitem[C2]{C17}
Cabr\'e, Xavier. 
Boundedness of stable solutions to semilinear elliptic equations: a survey. 
{\it Adv. Nonlinear Stud.} 17 (2017), 355-368.

\bibitem[C3]{C19}
Cabr\'e, Xavier.   
A new proof of the boundedness results for stable solutions to semilinear elliptic equations. 
{\it Discrete Contin. Dyn. Syst.} 39 (2019), 7249-7264. 

\bibitem[C4]{C22}
Cabr\'e, Xavier.   
Regularity of stable solutions to reaction-diffusion elliptic equations. 
{\it  European Congress of Mathematics}, 1-8, EMS Press, Berlin, 2023.

\bibitem[C5]{C22quant}
Cabr\'e, Xavier.
Estimates controlling a function by only its radial derivative and applications to stable solutions of elliptic equations.
To appear in {\it Amer. J. Math.}  (accepted 2024, \href{https://www.press.jhu.edu/journals/american-journal-mathematics?srsltid=AfmBOopXZoLabPy0bzYO3hvcKPSo45__w4LBsAFX6DYcqegNW9ceAvbS#2024%20Papers}{link}). See {\it arXiv:2211.13033 [math.AP].}

\bibitem[CC]{CC}
Cabr\'e, Xavier; Capella, Antonio.
Regularity of radial minimizers and extremal solutions of semilinear
elliptic equations. {\it J. Funct. Anal.} 238 (2006), 709-733.

\bibitem[CFRS]{CFRS}
Cabr\'{e}, Xavier; Figalli, Alessio; Ros-Oton, Xavier; Serra, Joaquim.
Stable solutions to semilinear elliptic equations are smooth up to dimension 9.
{\it Acta Math.} 224 (2020), 187-252.

\bibitem[CMS]{CMS}
Cabr\'{e}, Xavier; Miraglio, Pietro; Sanch\'on, Manel.
Optimal regularity of stable solutions to nonlinear equations involving the p-Laplacian.
{\it Adv. Calc. Var.} 15 (2022), 749-785.

\bibitem[CP]{CP}
Cabr\'e, Xavier; Poggesi, Giorgio.
{\it Stable solutions to some elliptic problems: minimal cones, the Allen-Cahn equation, and blow-up solutions.} Geometry of PDEs and related problems, 1-45,
Lecture Notes in Math. 2220, Fond. CIME/CIME Found. Subser., Springer, Cham, 2018.

\bibitem[CSi]{CSi}
Cabr\'{e}, Xavier; Saari, Olli.
A Poincar\'e inequality involving only the radial derivative. Forthcoming.

\bibitem[CSz]{CSz}
Cabr\'{e}, Xavier; Sanz-Perela, Tom\'as.
A universal H\"older estimate up to dimension 4 for stable solutions to half-Laplacian semilinear equations. 
{\it J. Differential Equations} 317 (2022), 153-195.

\bibitem[CL]{CL}
Chodosh, Otis; Li, Chao.
Stable minimal hypersurfaces in $\R^4$.
{\it Acta Math.} 233 (2024), 1-31.

\bibitem[CLMS]{CLMS}
Chodosh, Otis; Li, Chao; Minter, Paul; Stryker, Douglas.
Stable minimal hypersurfaces in $\R^5$. See {\it arXiv:2401.01492 [math.AP].}

\bibitem[CR]{CR}
Crandall, Michael G.; Rabinowitz, Paul H. 
Some continuation and variational methods for positive
solutions of nonlinear elliptic eigenvalue problems.
{\it Arch. Ration. Mech. Anal.} 58 (1975), 207-218.

\bibitem[dCP]{doCarmo-Peng}
do Carmo, Manfredo P.; Peng, Chiakuei K.
Stable complete minimal surfaces in $\R^3$ are planes. 
{\it Bull. Amer. Math. Soc. (N.S.)} 1 (1979), 903-906.

\bibitem[D]{Dup}
Dupaigne, Louis. {\it Stable Solutions of Elliptic Partial Differential Equations.} Chapman \& Hall/CRC Monogr. Surv. Pure Appl.
Math. 143, CRC Press, Boca Raton, 2011.

\bibitem[DF]{Dup-Far}
Dupaigne, Louis; Farina, Alberto.
Classification and Liouville-type theorems for semilinear elliptic equations in unbounded domains.
{\it Anal. PDE}  15 (2022), 551-566. 

\bibitem[E1]{Ern1} 
Erneta, I\~nigo U. 
{Stable solutions to semilinear elliptic equations for operators with variable coefficients}. {\it Commun. Pure Appl. Anal.} 22 (2023), 530-571.

\bibitem[E2]{Ern2} 
Erneta, I\~nigo U. 
{Energy estimate up to the boundary for stable solutions to semilinear elliptic problems}. 
{\it J. Differential Equations}  378 (2024), 204-233.

\bibitem[E3]{Ern3} 
Erneta, I\~nigo U. 
{Boundary H\"older continuity of stable solutions to semilinear elliptic problems in $C^{1,1}$ domains}.
To appear in {\it J. Reine Angew. Math.} See {\it arXiv:2305.07062 [math.AP].}

\bibitem[EGG]{EGG}
Esposito, Pierpaolo; Ghoussoub, Nassif; Guo, Yujin. 
{\it Mathematical analysis of partial differential equations modeling electrostatic MEMS.} Courant Lecture Notes in Mathematics,~20. American Mathematical Society, Providence, RI, 2010. 

\bibitem[FF]{FF}
Figalli, Alessio; Franceschini, Federico.
{$\epsilon$-regularity \`a la Brezis and the size of the singular set for stable solutions of semilinear elliptic PDEs}.
Forthcoming.

\bibitem[FZ]{Fig-Zhang}
Figalli, Alessio; Zhang, Yi Ru-Ya.
Uniform boundedness for finite Morse index solutions to supercritical semilinear elliptic equations.
{\it Comm. Pure Appl. Math.} 77 (2024), 3-36.

\bibitem[FS]{Fischer-Schoen}
Fischer-Colbrie, Doris; Schoen, Richard.
The structure of complete stable minimal surfaces in 3-manifolds of nonnegative scalar curvature.
{\it Comm. Pure Appl. Math.} 33 (1980), 199-211.


\bibitem[GP1]{GarPe92}
Garc\'{\i}a-Azorero, Jes\'us; Peral, Ireneo.
On an Emden-Fowler type equation. 
{\it Nonlinear Anal.} 18 (1992), 1085-1097.

\bibitem[GPP2]{GarPePu94}
Garc\'{\i}a-Azorero, Jes\'us; Peral, Ireneo; Puel, Jean-Pierre.
Quasilinear problems with exponential growth in the reaction term.
{\it Nonlinear Anal.} 22 (1994), 481-498.

\bibitem[Ge]{Gelf}
Gel'fand, Izrail' M.
Some problems in the theory of quasilinear equations. 
{\it Amer. Math. Soc. Transl. (2)} 29 (1963), 295-381. 

\bibitem[Gi]{Giaq}
Giaquinta, Mariano.
{\it Multiple integrals in the calculus of variations and nonlinear elliptic systems.}
Annals of Mathematics Studies, 105. Princeton University Press, Princeton, NJ, 1983.

\bibitem[GT]{GT}
Gilbarg, David; Trudinger, Neil S. 
{\it Elliptic partial differential equations of second order.} Reprint of the 1998 edition. Classics in Mathematics. Springer-Verlag, Berlin, 2001.

\bibitem[JL]{JL}
Joseph, Daniel D.; Lundgren, Thomas S. 
Quasilinear dirichlet problems driven by positive sources.
{\it Arch. Ration. Mech. Anal.} 49 (1973), 241-269.

\bibitem[L]{Le}
Leoni, Giovanni. 
{\it A first course in Sobolev spaces.} Graduate Studies in Mathematics, 105. American Mathematical Society, Providence, RI, 2009.

\bibitem[LL]{LiebLoss}
Lieb Elliot H.; Loss, Michael. 
{\it Analysis.} 
Second edition. Graduate Studies in Mathematics, 14. American Mathematical Society, Providence, RI, 2001.

\bibitem[LY]{Lin-Yang}
Lin, Fanghua; Yang, Yisong.
Nonlinear non-local elliptic equation modelling electrostatic actuation. 
{\it Proc. R. Soc. Lond. Ser. A Math. Phys. Eng. Sci.} 463 (2007), 1323-1337. 

\bibitem[MW]{Ma-Wei}
Ma, Li; Wei, Juncheng. 
Properties of positive solutions to an elliptic equation with negative exponent.
{\it J. Funct. Anal.} 254 (2008), 1058-1087.

\bibitem[M]{M}
Mazet, Laurent.
Stable minimal hypersurfaces in $\R^6$. See  {\it arXiv:2405.14676 [math.AP].}

\bibitem[N]{Ned00}
Nedev, Gueorgui. Regularity of the extremal solution of semilinear elliptic equations.
{\it C. R. Acad. Sci.
Paris} 330 (2000), 997-1002.
 
\bibitem[P]{P}
Peng, Fa.
The boundedness of stable solutions to semilinear elliptic equations with linear lower bound on nonlinearities.
{\it Int. Math. Res. Not. IMRN} (2024), no. 8, 6350-6373.

\bibitem[PZZ1]{PZZ1}
Peng, Fa; Zhang, Yi Ru-Ya; Zhou, Yuan.
Optimal regularity \& Liouville property for stable solutions to semilinear elliptic equations in $\R^n$ with $n\geq 10$. {\it Anal. PDE} 17 (9), (2024) DOI10.2140/apde.2024.17.3335.

\bibitem[PZZ2]{PZZ2}
Peng, Fa; Zhang, Yi Ru-Ya; Zhou, Yuan.
Interior H\"older regularity for stable solutions to semilinear elliptic equations up to dimension 5. See  {\it arXiv:2204.06345 [math.AP].}

\bibitem[R-O]{R-O}
Ros-Oton, Xavier. 
Regularity for the fractional Gelfand problem up to dimension 7.
{\it J. Math. Anal. Appl.} 419 (2014), 10-19.
				
\bibitem[Si]{Simon} 
Simon, Leon.
Schauder estimates by scaling. {\it Calc. Var. Partial Differential Equations} 5 (1997), 391-407.

\bibitem[SZ]{SZ} 
Sternberg, Peter; Zumbrun, Kevin. 
{Connectivity of phase boundaries in strictly convex domains}. {\it Arch. Rational Mech. Anal.} 141 (1998), 375-400.

\bibitem[St]{Struwe} 
Struwe, Michael. {\it Variational methods.
Applications to nonlinear partial differential equations and Hamiltonian systems.} Fourth edition. A Series of Modern Surveys in Mathematics, 34. Springer-Verlag, Berlin, 2008. 


\bibitem[V]{Vil13}
Villegas, Salvador. 
Boundedness of extremal solutions in dimension 4. {\it Adv. Math.} 235 (2013), 126-133.

\bibitem[W]{Wang}
Wang, Kelei.
Recent progress on stable and finite Morse index solutions of semilinear elliptic equations.
{\it Electron. Res. Arch.} 29 (2021), 3805-3816.


\end{thebibliography}
\end{document}